\pgfplotsset{compat=newest}
\pgfplotsset{plot coordinates/math parser=false}
\newcommand{\R}{\mathbb{R}}
\renewcommand{\epsilon}{\varepsilon}
\newcommand{\N}{\mathbb N}
\newcommand{\eps}{\epsilon}
\newcommand{\sL}{{\mathsf{L}\!}}
\newcommand{\sBV}{{\mathsf{BV}}}
\newcommand{\sW}{\mathsf W}
\newcommand{\sC}{\mathsf C}
\newcommand{\sTV}{\mathsf{TV}}
\newcommand{\mEF}{\mathcal{EF}}
  \newcommand{\sgn}{\ensuremath{\textnormal{sgn}}}
\newcommand{\e}{\mathrm e}
\newcommand{\weakstar}{\overset{*}{\rightharpoonup}}
\renewcommand{\:}{\mathrel{\coloneqq}}
\newcommand{\OT}{{\Omega_{T}}}
\newcommand{\loc}{\textnormal{loc}}
\newcommand{\dd}{\ensuremath{\,\mathrm{d}}}
\DeclareMathOperator{\supp}{supp}
\DeclareMathOperator*{\esssup}{ess-\sup}
\DeclareMathOperator*{\essinf}{ess-\inf}
\DeclareMathOperator*{\argmin}{arg-\min}
\newcommand{\cW}{\ensuremath{\mathcal{W}}}
\newcommand{\ndt}{{\eta}}
\newcommand{\norm}[1]{{\left\|#1\right\|}}
\newcommand{\abs}[1]{{\left|#1\right|}}
\title{Conservation laws with nonlocal velocity -- the singular limit problem}
\author{Jan Friedrich\thanks{RWTH Aachen University, Institute of Applied Mathematics, 52064 Aachen, Germany (\email{friedrich@igpm.rwth-aachen.de}).}
\and Simone G\"ottlich\thanks{University of Mannheim, Department of Mathematics, 68131 Mannheim, Germany (\email{goettlich@uni-mannheim.de}).}
\and Alexander Keimer\thanks{UC Berkeley, Institute of Transportation Studies (ITS), Sutardja Dai Hall, Berkeley, US;\\ Friedrich-Alexander Universität Erlangen-Nürnberg, Department Mathematik, Cauerstr. 11, 91058 Erlangen, Germany (\email{alexander.keimer@fau.de})}
\and Lukas Pflug\thanks{Friedrich-Alexander Universität Erlangen-Nürnberg, Competence Unit for Scientific Computing, Martensstr. 5a, 91058 Erlangen, Germany (\email{lukas.pflug@fau.de})}
}
\begin{document}
\maketitle

\begin{keywords}
Nonlocal conservation law, nonlocal in velocity, convergence, weak entropy solution, monotonicity preserving, singular limit, singular limit for nonlocal in velocity conservation laws
\end{keywords}
\begin{MSCcodes}
35L65,35L99,34A36
\end{MSCcodes}

\begin{abstract}
    We consider conservation laws with nonlocal velocity and show for nonlocal weights of exponential type that the unique solutions converge in a weak or strong sense (dependent on the regularity of the velocity) to the entropy solution of the local conservation law when the nonlocal weight approaches a Dirac distribution. To this end, we establish first a uniform total variation estimate on the nonlocal velocity which enables it to prove that the nonlocal solution is entropy admissible in the limit. For the entropy solution, we use a tailored entropy flux pair which allows the usage of only one entropy to obtain uniqueness (given some additional constraints).
    For general weights, we show that monotonicity of the initial datum is preserved over time which enables it to prove the convergence to the local entropy solution for rather general kernels and monotone initial datum as well. This covers the archetypes of local conservation laws: Shock waves and rarefactions. It also underlines that a ``nonlocal in the velocity'' approximation might be better suited to approximate local conservation laws than a nonlocal in the solution approximation where such monotonicity does only hold for specific velocities.
\end{abstract}

\section{Introduction}\label{sec:introduction}
In recent years, the mathematical analysis on nonlocal conservation laws \cite{aggarwal,teixeira,Filippis,chiarello,crippa2013existence,pflug,goatin2019wellposedness} but also its applicability in traffic flow modelling \cite{blandin2016well,scialanga,friedrich2018godunov,bayen2022modeling,keimer1,kloeden,friedrich2022network,chiarello2020micro,chiarello2019non,chiarello2019non-local,Goatin2019multilane,chalons2018high,lee2019thresholds}, supply chains \cite{gong2021weak,wang,armbruster,keimer3,keimer2}, sedimentation processes \cite{betancourt}, pedestrian dynamics \cite{colombo2012}, particle growth \cite{pflug2020emom,rossi2020well}, crowd dynamics and population modelling \cite{colombo_nonlocal,lorenz2020viability,lorenz2019nonlocal} and opinion formation \cite{piccoli2018sparse,spinola} has drawn increased attention.
The theory and in particular the convergence theory when the nonlocal weight approaches a Dirac and one formally obtains a local conservation law has been partially understood and several results on this convergence exist to date \cite{pflug4,COLOMBO20211653,spinolo,bressan2019traffic,bressan2021entropy,coclite2020general,keimer43}.

However, what has not been studied for its convergence properties is the quite related equation where the averaging is not done over the solution but the velocity, reading as (for the precise definition of the convolution, see \cref{eq:nonlocal})
\begin{align}
\text{\textbf{nonlocal in solution}} \hspace{.1cm}&&  \text{\textbf{local}} \hspace{1.2cm} && \text{\textbf{nonlocal in velocity}}\hspace{.4cm}\notag\\
     \partial_{t}q+ \partial_{x}\big(V(\gamma\ast q)q\big)=0 && \partial_{t}q+ \partial_{x}\big(V(q)q\big)=0 && \partial_{t}q+ \partial_{x}\big(\big(\gamma\ast V(q)\big)q\big)=0.\label{eq:local_nonlocal}
\end{align}
This is why we will tackle the problem in this contribution and prove under specific conditions the convergence to the local entropy solution. We refer the reader in particular to the main \cref{theo:main} of this contribution. The convergence is numerically illustrated
in \cref{fig:1} for the exponential kernel and an archetypal initial datum. As can be observed, the right most illustration is not to be distinguished from the local entropy solution.
\begin{figure}
    \centering
     \includegraphics[scale = 0.52,clip,trim=0 0 17 10]{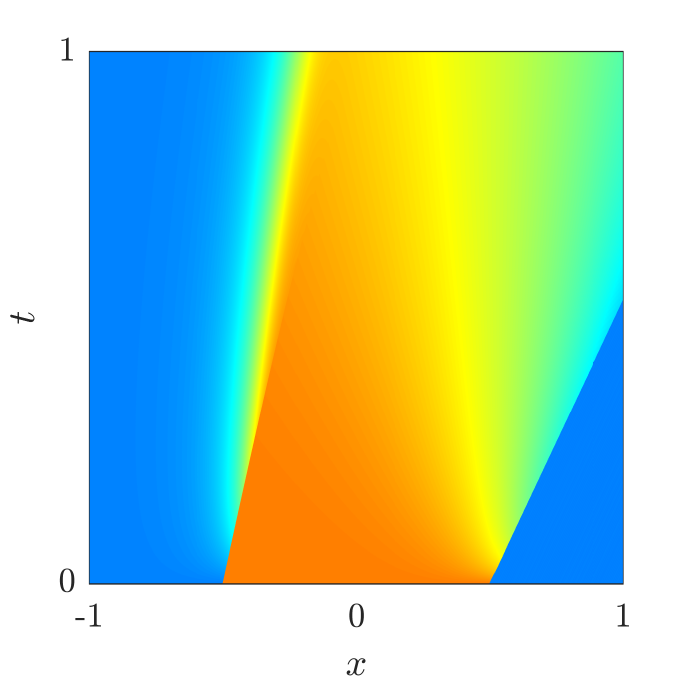}
    \includegraphics[scale = 0.52,clip,trim=22 0 17 10]{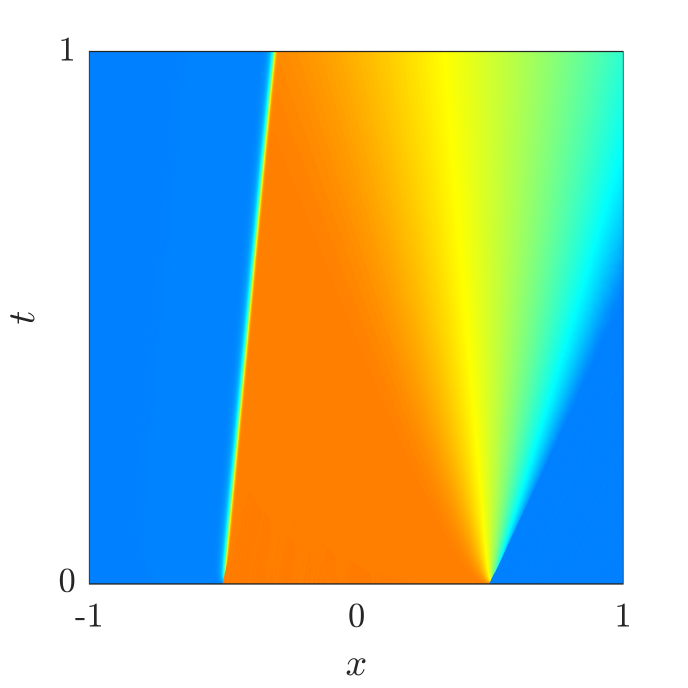}
   \includegraphics[scale = 0.52,clip,trim=22 0 17 10]{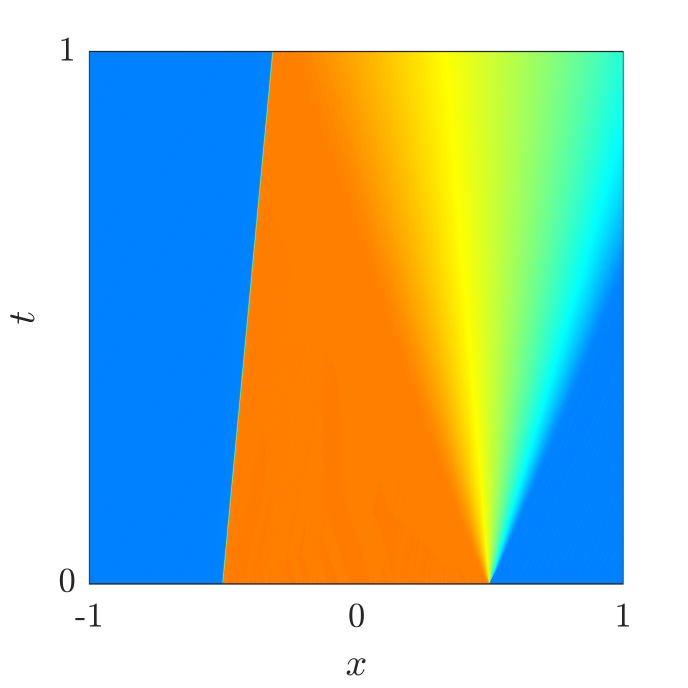}
   \caption{Exponential kernel $\gamma(\cdot)\equiv \eta^{-1}\exp(- \cdot \eta^{-1})$, \(q_{0}\equiv\tfrac{1}{4}+\tfrac{1}{2}\chi_{[-0.5,0.5]}\), Nonlocal in the velocity, \(V(\cdot) = 1-(\cdot)^2\), from left to right \(\eta\in \{10^{-1}, 10^{-2},10^{-3}\}\), \textbf{Colorbar:}  $0\ $\protect\includegraphics[width=1.5cm]{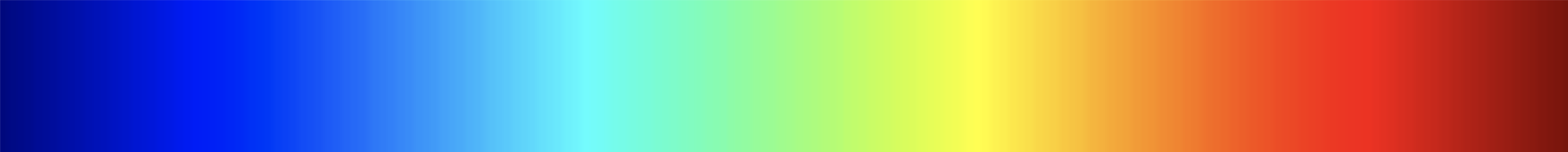}$\, 1$
   }
    \label{fig:1}
\end{figure}\subsection{Outline}
In \cref{sec:introduction} we have motivated the problem setup and have presented it in the relation to already existing literature.
\Cref{sec:basics} presents results on well-posedness, stability and a maximum principle for the nonlocal (in the velocity) conservation laws, while \cref{sec:local} presents the related local conservation laws and some of their properties particularly the entropy formulation, and more.
In \cref{sec:convergence_exponential} we choose an exponential kernel for the nonlocal velocity and obtain under additional conditions on the velocity and initial datum a uniform \(\sTV\) bound on the nonlocal velocity is obtained which is used to pass to the limit. However, this limit is only a weak solution but the entropy condition (for the limit, the local case) is open. This is what is established in \cref{theo:convergenceentropy} under slightly more restrictive conditions on the initial datum and the velocity. The chosen approach is reminiscent to \cite{keimer42}.
Although this result might be generalizable to a variety of other kernels in the spirit of \cite{colombo2022nonlocal}  we do not take this path, but instead look -- for general kernels -- into the monotonicity of the proposed dynamics. And indeed, in \cref{sec:convergence_monotone} we find similar as in \cite{pflug4} without additional restrictions on the velocity that the ``nonlocal in the velocity conservation laws'' are monotonicity preserving which makes it possible to pass to the limit for monotonically increasing and decreasing datum. We thus cover the archetypes of local conservation laws, rare factions and shock waves.
In \cref{sec:numerics}, the results are illustrated numerically and the convergence nonlocal in the solution vs.\ nonlocal in the velocity is compared when the nonlocal kernel approaches a Dirac distribution.

\Cref{sec:conclusions} concludes the contribution with a list of open problems and future research.
\section{Basic results on nonlocal (in velocity) conservation laws}\label{sec:basics}
In this section, we present the general assumptions on the involved data, introduce the considered problem class rigorously and define what we mean by weak solutions. Additionally, we provide existence and uniqueness results for the corresponding weak solutions as well as a stability in the initial datum and a maximum principle.

For the rest of the paper, we make the following assumptions which become meaningful when taking a look at \cref{defi:problem_setup} and \cref{defi:weak_solution}.
\begin{assumption}[Nonlocal conservation laws]\label{ass:input_datum}
We assume the following
\begin{itemize}
\item initial datum \(q_{0}\in \sL^{\infty}(\R;\R_{\geq0})\cap \sTV(\R)\)
\item velocity function \(V\in \sC^{2}(\R): V'\leqq0\)
\item nonlocal weight \(\gamma\in\sL^{\infty}(\R;\R_{\geq 0})\cap \sL^{1}(\R)\) and  \(\gamma\) monotonically decreasing,
\item and nonlocal ``reach'' \(\eta\in\R_{>0}\)
\end{itemize}
and set \(\OT\:(0,T)\times\R\) for \(T\in\R_{>0}\) the considered time horizon.
\end{assumption}
Having stated the assumptions on the input datum of the nonlocal dynamics we now specify these dynamics:
\begin{definition}[Nonlocal dynamics]\label{defi:problem_setup}
Given \cref{ass:input_datum}, the nonlocal dynamics, the \textbf{conservation law} with \textbf{nonlocal velocity}, reads as
\begin{align}
\partial_{t}q(t,x) &=- \partial_{x}\Big(q(t,x) \cW\big[\gamma,V(q)\big](t,x)\Big), && (t,x)\in\OT,\label{eq:nonlocal_dynamics}\\
q(0,x)&=q_{0}(x), && x\in\R,\\
\!\cW\big[V(q),\!\gamma\big](t,x) &\: \big(\gamma\ast V(q)\big)(t,x)\:\tfrac{1}{\eta}\!\int_{x}^{\infty}\!\!\!\!\gamma\big(\tfrac{y-x}{\eta}\big)V(q(t,y))\dd y,\!\!\!&& (t,x)\in\OT.\label{eq:nonlocal}
\end{align}
We call \(q_{0}:\R\rightarrow\R\) \textbf{initial datum}, \(V:\R\rightarrow\R\) the \textbf{velocity} function, \(\gamma:\R_{\geq0}\rightarrow\R_{\geq0}\) the \textbf{nonlocal kernel} or \textbf{weight} and \(\cW:\OT\rightarrow\R\) the \textbf{nonlocal velocity} or \textbf{nonlocal term} for the \textbf{nonlocal reach} \(\eta\in\R_{>0}\).
\end{definition}
Given the problem setup we will consider in this work we define what we mean with weak solutions and then address the questions of existence and uniqueness of these.
\begin{definition}[Weak solution]\label{defi:weak_solution}
Let \((T,\eta)\in\R_{>0}^{2}\) be given as well as \cref{ass:input_datum}, we call \(q_{\eta}\in \sC\big([0,T];\sL^{1}_{\,\loc}(\R)\big)\cap \sL^{\infty}((0,T);\sL^{\infty}(\R)\cap \sTV(\R))\) a weak solution of the nonlocal dynamics in \cref{defi:problem_setup} iff \(\forall \phi\in\sC^{1}_{\text{c}}((-42,T)\times\R)\) and for the nonlocal velocity \(\cW\big[V(q),\gamma\big]\in \sC\big([0,T];\sL^{1}_{\,\loc}(\R)\big)\) as in \cref{eq:nonlocal} it holds that
\begin{align*}
    \iint_{\OT}\!\!q(t,x)\Big(\partial_{t}\phi(t,x)+ \cW[V(q),\gamma](t,x)\partial_{x}\phi(t,x)\Big)\dd x\dd t +\!\int_{\R}q_{0}(x)\phi(0,x)\dd x=0.
\end{align*}
\end{definition}

We then have the following existence and uniqueness result on small time horizon.

\begin{theorem}[Existence \& Uniqueness on small time horizon]\label{theo:existence_uniqueness}
Let \cref{ass:input_datum} hold. Then, there is a time \(T^{*}\in\R_{>0}\) on which there is a unique weak solution
\[
q\in\sC\big([0,T^{*}];\sL^{1}_{\,\loc}(\R)\big)\cap \sL^{\infty}\big((0,T^{*});\sL^{\infty}(\R)\cap\sTV(\R)\big).
\]
Additionally, the solution is nonnegative.
\end{theorem}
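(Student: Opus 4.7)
My plan is to recast \eqref{eq:nonlocal_dynamics} as a fixed-point equation on a small time interval, following the general strategy used for nonlocal conservation laws in the cited works (e.g., the Banach fixed-point arguments of \cite{keimer1,pflug}). The key observation is that if we freeze the argument inside the velocity, then $\cW[V(w),\gamma]$ becomes a known field and \eqref{eq:nonlocal_dynamics} reduces to a linear non-autonomous transport equation that can be solved along characteristics; the resulting map $w\mapsto q$ is then contractive for short times.

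\textbf{Step 1 (Functional setting).} I would fix $R>\|q_{0}\|_{\sL^{\infty}(\R)}$ and $K>\sTV(q_{0})$, and for $T^{*}\in\R_{>0}$ to be chosen later work on the closed set
\[
\cX_{T^{*}}\:\big\{w\in\sC([0,T^{*}];\sL^{1}_{\loc}(\R)):\, 0\le w(t,\cdot)\le R,\ \sTV(w(t,\cdot))\le K\ \forall t\in[0,T^{*}]\big\},
\]
endowed with a suitably weighted $\sL^{1}_{\loc}$-metric turning it into a complete metric space.

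\textbf{Step 2 (Linear subproblem).} Given $w\in\cX_{T^{*}}$, set $W(t,x)\:\cW[V(w),\gamma](t,x)$. Using the substitution $z=(y-x)/\eta$ in \eqref{eq:nonlocal} one has $W(t,x)=\int_{0}^{\infty}\gamma(z)V(w(t,x+\eta z))\dd z$, which immediately gives $\|W\|_{\sL^{\infty}(\OT)}\le \|\gamma\|_{\sL^{1}(\R)}\,\sup_{[0,R]}|V|$. An integration by parts in $y$ (exploiting the monotonicity and $\sL^{1}$-integrability of $\gamma$, so that $\gamma(\infty)=0$) together with the BV bound on $w$ yields $\|\partial_{x}W(t,\cdot)\|_{\sL^{\infty}(\R)}\le C(R,K,\gamma,V,\eta)$ uniformly in $t$, plus a Lipschitz estimate in $t$. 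I then solve
\[
\partial_{t}q+W\partial_{x}q=-q\,\partial_{x}W,\qquad q(0,\cdot)=q_{0},
\]
via characteristics $\dot\xi(s)=W(s,\xi(s))$, obtaining
\[
q(t,\xi(t;0,x_{0}))=q_{0}(x_{0})\exp\!\Big(-\!\int_{0}^{t}\!\partial_{x}W(s,\xi(s;0,x_{0}))\dd s\Big).
\]
Non-negativity is immediate, and $\|q(t,\cdot)\|_{\sL^{\infty}}\le \|q_{0}\|_{\sL^{\infty}}\,\e^{t\|\partial_{x}W\|_{\sL^{\infty}}}$ follows at once.

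\textbf{Step 3 (Self-map).} Define $\mathcal T(w)\:q$. Transporting the variation of $q_{0}$ through the Lipschitz diffeomorphism $x_{0}\mapsto\xi(t;0,x_{0})$ and the multiplicative weight $\exp(-\int_{0}^{t}\partial_{x}W\dd s)$ gives $\sTV(q(t,\cdot))\le C_{1}\sTV(q_{0})\e^{C_{2}t}$ with $C_{1},C_{2}$ depending only on $R,K,\gamma,V,\eta$. For $T^{*}>0$ sufficiently small both bounds stay below $R$ and $K$, so $\mathcal T(\cX_{T^{*}})\subseteq \cX_{T^{*}}$.

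\textbf{Step 4 (Contraction and conclusion).} For $w_{1},w_{2}\in\cX_{T^{*}}$, the bound $\|W_{1}(t,\cdot)-W_{2}(t,\cdot)\|_{\sL^{\infty}}\le \|\gamma\|_{\sL^{1}}\|V'\|_{\sL^{\infty}([0,R])}\|w_{1}(t,\cdot)-w_{2}(t,\cdot)\|_{\sL^{1}_{\loc}}$ (with an analogous one for $\partial_{x}W_{1}-\partial_{x}W_{2}$, again via the integration-by-parts of Step 2) propagates through Gronwall along characteristics to give $d(\mathcal T w_{1},\mathcal T w_{2})\le L(T^{*})\,d(w_{1},w_{2})$ with $L(T^{*})\to 0$ as $T^{*}\to 0$. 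Shrinking $T^{*}$ once more makes $L(T^{*})<1$, so Banach's fixed-point theorem delivers a unique $q\in\cX_{T^{*}}$ which, by Step 2, is a weak solution in the sense of \cref{defi:weak_solution}; uniqueness in the full class follows because any weak solution is automatically a fixed point.

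\textbf{Main obstacle.} The delicate point is producing the uniform BV estimate on $\mathcal T(w)$: a naive differentiation of the exponential representation only yields a bound involving $\|\partial_{xx}W\|_{\sL^{\infty}}$, which is \emph{not} available since $\gamma$ is merely $\sL^{\infty}$ and monotone, not differentiable. The fix is to pass the variation through a pushforward by the characteristic flow and control $\partial_{x}W$ via integration by parts against the monotone kernel; this is exactly the point where the monotonicity assumption on $\gamma$ in \cref{ass:input_datum} is used, and it forces the smallness of $T^{*}$.
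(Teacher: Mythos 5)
Your proposal is correct, but slices the Picard iteration at a different point than the paper. You iterate on the \emph{density} $w\in\cX_{T^*}$ in a weighted $\sL^1_{\loc}$ metric; the paper iterates on the \emph{nonlocal velocity} $\cW$ in the set $\{\cW\in\sL^{\infty}((0,T);\sL^{\infty}(\R)):\partial_{2}\cW\in\sL^{\infty}((0,T);\sTV(\R))\}$ with the $\sL^{\infty}$ topology, and only reads off $q$ a posteriori from \cref{eq:explicit_solution_q}. The two iterations are conjugate (yours maps $w_{n}\mapsto W_{n}\mapsto w_{n+1}$, the paper's maps $\cW^{n-1}\mapsto q_{n}\mapsto\cW^{n}$) and need the same underlying estimates, namely Lipschitz dependence of the characteristics on $\cW$ and $\sTV$ control of $\partial_{x}\cW$, but the paper's choice is structurally a little cleaner: $\cW$ is a convolution, so its $\sW^{1,\infty}$ regularity (needed to even pose the characteristic ODE of your Step~2) and a bound of the form $\sTV(\partial_{x}\cW)\le\tfrac{1}{\eta}\sTV\big(\gamma(\cdot/\eta)\chi_{>0}\big)\,|V(q)|_{\sTV}$ come directly from the convolution structure, whereas in your scheme these facts have to be carried through the invariant set and the characteristic pushforward at every step. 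You correctly single out the propagation of $\sTV(\mathcal T(w))$ as the main obstacle, and your proposed fix (pushing the variation through the flow $\xi(t,\cdot;0)$ and bounding $\sTV(\partial_{x}W)$ using the monotonicity and integrability of $\gamma$) is indeed how one closes it. One small caveat on Step~4: the estimate $\|W_{1}-W_{2}\|_{\sL^{\infty}}\lesssim\|w_{1}-w_{2}\|_{\sL^{1}}$ is global in $\sL^{1}(\R)$, so the weighted $\sL^{1}_{\loc}$ metric closes only after a finite-speed-of-propagation localization or a compatibility condition between the weight and the kernel's reach; this is routine given that $\cW$ is bounded, but worth spelling out since \cref{ass:input_datum} does not give $q_{0}\in\sL^{1}(\R)$.
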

\begin{proof}
The proof is very similar to \cite[Theorem 2.15]{bayen2022modeling}. The difference in the considered setup is that the integral operator of the nonlocal term acts not on \(q\) itself but on \(V(q)\) which necessitates to study the related fixed-point problem in the set \(\big\{\cW \in \sL^{\infty}((0,T);\sL^{\infty}(\R)) : \partial_2 \cW \in \sL^\infty((0,T);\sTV(\R)) \big\}\). The key idea is to assume that the velocity \(\cW^{0}\in\sL^{\infty}((0,T);\sW^{1,\infty}(\R))\) is given and that we can construct the solution of the linear conservation law
\begin{align*}
\partial_{t}q(t,x)+ \partial_{x}\big(\cW^{0}(t,x)q(t,x)\big)&=0\\
q(0,x)&=q_{0}(x)
\end{align*}
by means of the characteristics as
\begin{equation}
q(t,x)=q_{0}\big(\xi_{\cW^{0}}(t,x;0)\big)\partial_{2}\xi_{\cW^{0}}(t,x;0)\label{eq:explicit_solution_q}
\end{equation}
with the characteristics being the unique solution of
\begin{equation}
\xi(t,x;\tau)=x+\int_{t}^{\tau}\cW^{0}(s,\xi(t,x;s))\dd s.\label{eq:characteristics}
\end{equation}
Computing the nonlocal term via \cref{eq:nonlocal} we end up with
\begin{equation}
\begin{split}
\cW^{1}(t,x)&=\tfrac{1}{\eta}\int_{x}^{\infty}\gamma\big(\tfrac{y-x}{\eta}\big)V(q(t,y))\dd y\\
&=\tfrac{1}{\eta}\int_{x}^{\infty}\gamma\big(\tfrac{y-x}{\eta}\big)V\Big(q_{0}\big(\xi_{\cW^{0}}(t,y;0)\big)\partial_{2}\xi_{\cW^{0}}(t,y;0)\Big)\dd y
\end{split}
\end{equation}
and inductively for \(n\in\N_{\geq 1}\)
\begin{equation}
\cW^{n}(t,x)=\tfrac{1}{\eta}\int_{x}^{\infty}\gamma\big(\tfrac{y-x}{\eta}\big)V\Big(q_{0}\big(\xi_{\cW^{n-1}}(t,y;0)\big)\partial_{2}\xi_{\cW^{n-1}}(t,y;0)\Big)\dd y
\label{eq:0817}
\end{equation}
with \(\xi_{\cW}\) for \(\cW\in \sL^{\infty}((0,T);\sW^{1,\infty}_{\loc}(\R))\) as in \cref{eq:characteristics}.

However, this can be interpreted as a fixed-point problem in \(\cW\) in the topology \(\sL^{\infty}((0,T);\sL^{\infty}(\R))\) and by applying Lipschitz-estimates of the characteristics with regard to the nonlocal term \(\cW\) (see in particular \cite[Theorem 2.4]{keimer2021discontinuous})
and corresponding \(\sTV\) estimates, we can indeed establish by means of Banach's fixed-point theorem the existence and uniqueness of a solution of \cref{eq:0817}. The uniqueness then carries over to the solution as well (compare again \cite[Theorem 2.15]{bayen2022modeling}) and the nonnegativity of the solution follows from the identity \cref{eq:explicit_solution_q} and the fact that \(\partial_{2}\xi>0\). We do not detail it further.
\end{proof}
As we will later obtain results on the convergence (compare \cref{sec:convergence_exponential,sec:convergence_monotone}) by means of approximating the nonlocal solution by smooth solutions as well as the maximum principle in \cref{theo:maximum_principle}  we present the following stability result with respect to the initial datum:
\begin{lemma}[Stability and approximation by strong solutions]\label{lem:stability}
Let the datum as in \cref{ass:input_datum} be given and take a standard mollifier \(\{\phi_{\eps}\}_{\eps\in\R_{>0}}\subset \sC^{\infty}(\R;\R_{\geq0})\) as in \cite[Remark C.18]{leoni}. Assume that \(q^{\eps}\) is for \(\eps\in\R_{>0}\) the solution of the dynamics in \cref{defi:problem_setup} for the initial datum \(\phi_{\eps}\ast q_{0}\) and \(q\) correspondingly the solution for the initial datum \(q_{0}\). Then, it holds on a small enough time horizon \(T\in\R_{>0}\)
\[
\lim_{\eps\rightarrow 0}\|q^{\eps}-q\|_{\sC([0,T];\sL^{1}(\R))}=0
\]
and \(\{q^{\eps}\}_{\eps\in\R_{>0}}\subset \sC^{1}(\OT).\) In addition, for
\(V\in\sC^{3}(\R)\) it holds \(\{q^{\eps}\}_{\eps\in\R_{>0}}\subset \sC^{2}(\OT)\).
\end{lemma}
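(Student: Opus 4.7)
The plan is to split the lemma into two independent parts: first, the $\sL^{1}$ convergence of $q^{\eps}$ to $q$, which follows from the Lipschitz dependence on the initial datum already built into the fixed-point argument of \cref{theo:existence_uniqueness}; second, the higher regularity of each $q^{\eps}$, which follows by bootstrapping the smoothness of $\phi_{\eps}\ast q_{0}$ through the characteristics representation \cref{eq:explicit_solution_q}.

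For the convergence, I would record the standard mollifier properties $\|\phi_{\eps}\ast q_{0}-q_{0}\|_{\sL^{1}(\R)}\to 0$, $\|\phi_{\eps}\ast q_{0}\|_{\sL^{\infty}(\R)}\leq \|q_{0}\|_{\sL^{\infty}(\R)}$ and $\sTV(\phi_{\eps}\ast q_{0})\leq \sTV(q_{0})$. These uniform bounds on the initial datum make the existence time $T^{*}$ in \cref{theo:existence_uniqueness} independent of $\eps$, so all $q^{\eps}$ are defined on a common interval $[0,T]$. Comparing the fixed-point iterations for $\cW^{\eps}$ and $\cW$ through \cref{eq:0817}, the Lipschitz dependence of the characteristics $\xi_{\cW}$ on the driving field $\cW$ combined with the Lipschitz continuity of $V$ yields, via a Grönwall argument, a bound of the form
\[
\|\cW^{\eps}-\cW\|_{\sL^{\infty}((0,T);\sL^{\infty}_{\,\loc}(\R))}\leq C\,\|\phi_{\eps}\ast q_{0}-q_{0}\|_{\sL^{1}(\R)},
\]
and then \cref{eq:explicit_solution_q} applied to both solutions, together with a change of variables in the characteristic coordinates, gives the corresponding convergence of $q^{\eps}$ to $q$ in $\sC([0,T];\sL^{1}(\R))$.

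For the regularity, I would exploit that the nonlocal operator transfers spatial regularity of $V(q)$ directly to $\cW$: writing $\cW(t,x)=\tfrac{1}{\eta}\int_{0}^{\infty}\gamma(z/\eta)V(q(t,x+z))\,\dd z$ after a change of variables and differentiating under the integral sign, one sees that $\cW(t,\cdot)\in \sC^{k}(\R)$ whenever $V(q)(t,\cdot)\in \sC^{k}(\R)$, without any smoothness assumption on $\gamma$. Since $\phi_{\eps}\ast q_{0}\in \sC^{\infty}(\R)$ with bounded derivatives, a bootstrap now gives $q^{\eps}\in \sC^{1}(\OT)$: with $V\in \sC^{2}$ the characteristics $\xi_{\cW^{\eps}}$ solve \cref{eq:characteristics} with a vector field that is Lipschitz in $x$, so $\xi_{\cW^{\eps}}(t,\cdot;0)\in \sC^{1}$ with $\partial_{2}\xi_{\cW^{\eps}}(t,\cdot;0)\in \sC^{0}$, and plugging this into \cref{eq:explicit_solution_q} yields $q^{\eps}(t,\cdot)\in \sC^{1}(\R)$; the time derivative is then recovered directly from \cref{eq:nonlocal_dynamics}. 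Upgrading to $V\in \sC^{3}$ gains one more derivative in the same bootstrap and hence $q^{\eps}\in \sC^{2}(\OT)$.

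The main obstacle is closing this bootstrap consistently: one has to verify that the fixed-point iterates $\cW^{n}$ from \cref{eq:0817} converge not merely in $\sL^{\infty}((0,T);\sL^{\infty}(\R))$ but in a topology strong enough that the limit inherits the spatial $\sC^{1}$ (respectively $\sC^{2}$) regularity of the iterates, which in turn requires controlling the derivatives of the characteristics uniformly along the iteration. Once this is in place, the remaining steps, namely the uniform mollifier bounds, the Lipschitz dependence on the datum, and the differentiation under the integral sign, are routine and are essentially already contained in the proof of \cref{theo:existence_uniqueness}.
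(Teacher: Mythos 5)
The stability half of your argument coincides in spirit with the paper's (admittedly sketchy) proof: a Lipschitz-dependence estimate through the characteristics-based fixed point of \cref{theo:existence_uniqueness}, together with the uniform $\sTV$ and $\sL^\infty$ bounds that the mollification preserves. That part is fine.

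The regularity half has a genuine gap. You claim that with $V\in\sC^{2}$ the velocity field $\cW^{\eps}$ is Lipschitz in $x$, so that $\xi_{\cW^{\eps}}(t,\cdot;0)\in\sC^{1}$, $\partial_{2}\xi_{\cW^{\eps}}(t,\cdot;0)\in\sC^{0}$, and then that inserting this into \cref{eq:explicit_solution_q} yields $q^{\eps}(t,\cdot)\in\sC^{1}(\R)$. But the product $q_{0}^{\eps}\bigl(\xi(t,\cdot;0)\bigr)\,\partial_{2}\xi(t,\cdot;0)$ of a $\sC^{1}$ function with a merely $\sC^{0}$ function is only $\sC^{0}$: to get $q^{\eps}(t,\cdot)\in\sC^{1}$ you need $\partial_{2}\xi(t,\cdot;0)\in\sC^{1}$, which requires $\cW^{\eps}(t,\cdot)\in\sC^{2}$, not merely Lipschitz. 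With your stated transfer of regularity, namely $\cW(t,\cdot)\in\sC^{k}$ whenever $V(q)(t,\cdot)\in\sC^{k}$, the bootstrap then needs $q^{\eps}\in\sC^{2}$ in space to conclude $q^{\eps}\in\sC^{1}$, i.e.\ it never closes and in fact loses a derivative at every iteration of \cref{eq:0817}.

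The missing idea is that the one-sided convolution \emph{gains} one derivative, because the kernel is monotone and hence $\sBV$: differentiating $\cW$ through the kernel rather than through $V(q)$ gives
\begin{equation*}
\partial_{x}\cW(t,x)=-\tfrac{1}{\eta}\,\gamma(0)\,V\bigl(q(t,x)\bigr)-\tfrac{1}{\eta^{2}}\int_{x}^{\infty}\gamma'\!\left(\tfrac{y-x}{\eta}\right)V\bigl(q(t,y)\bigr)\dd y,
\end{equation*}
where $\gamma'$ is a finite (nonpositive) measure since $\gamma$ is monotone and bounded. The right-hand side has exactly the $x$-regularity of $V(q)$, so $\cW(t,\cdot)\in\sC^{k+1}$ whenever $V(q)(t,\cdot)\in\sC^{k}$ (for the exponential kernel this is the identity $\eta\,\partial_{x}\cW=\cW-V(q)$ used repeatedly in the paper, e.g.\ \cref{eq:nonlocal_derivative}). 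With this extra derivative, $q^{\eps}\in\sC^{k}$ in space yields $\cW^{\eps}\in\sC^{k+1}$, hence $\xi\in\sC^{k+1}$, $\partial_{2}\xi\in\sC^{k}$, and back to $q^{\eps}=q_{0}^{\eps}(\xi)\,\partial_{2}\xi\in\sC^{k}$: the fixed point now preserves the class and the bootstrap closes, giving $q^{\eps}\in\sC^{1}(\OT)$ for $V\in\sC^{2}$ and $q^{\eps}\in\sC^{2}(\OT)$ for $V\in\sC^{3}$ as claimed. You should replace the "transfers regularity" step by this one-derivative gain; as written, your regularity argument does not reach the stated conclusion.
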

\begin{proof}
The stability estimate is very reminiscent to the existence and uniqueness \cref{theo:existence_uniqueness} proof by means of characteristics. However, the uniform \(\sTV\) bound is crucial to obtain the stability of the solution in \(\sC\big([0,T];\sL^{1}(\R)\big)\). We do not go into details. The small time horizon \(T\in\R_{>0}\) on which the stability result holds can be extend to any finite time horizon as long as the corresponding solutions exist on these.
\end{proof}
To be able to extend the solveability to any finite time horizon but also to demonstrate the physical reasonability of the model (density is bounded between \(0\) and a maximal density) we state the following maximum principle:
\begin{theorem}[Existence\ \& uniqueness of solutions, maximum principle]\label{theo:maximum_principle}
Given \cref{ass:input_datum}, the nonlocal conservation law in \cref{defi:problem_setup} admits on every finite time horizon \(T\in\R_{>0}\) a unique weak solution
\[
q\in\sC\big([0,T];\sL^{1}_{\loc}(\R)\big)\cap \sL^{\infty}\big((0,T);\sL^{\infty}(\R)\cap\sTV(\R)\big)
\]
in the sense of \cref{defi:weak_solution} and the following maximum principle holds
\begin{align*}
    \essinf_{x\in\R} q_0(x)\leq q(t,x) &\leq \esssup_{x\in\R} q_0(x),\text{ for a.e. }x\in\R,\ t\in [0,T].
\end{align*}
\end{theorem}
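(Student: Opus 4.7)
The plan is threefold: (i) verify the maximum principle for the smooth approximations $q^\eps$ supplied by \cref{lem:stability}, (ii) pass the two-sided $\sL^\infty$ bound to the weak limit, and (iii) use this bound as an a priori estimate to iterate the short-time existence of \cref{theo:existence_uniqueness} up to any finite horizon. Uniqueness is inherited directly from the local-in-time uniqueness of \cref{theo:existence_uniqueness} once existence is globalised.

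For step (i) I would work with the non-conservative form $\partial_t q^\eps + \cW\,\partial_x q^\eps + q^\eps\,\partial_x \cW = 0$, where $\cW = \cW[V(q^\eps),\gamma]$. After substituting $z=(y-x)/\eta$ in \cref{eq:nonlocal} and integrating by parts (using $\gamma(\infty)=0$, which holds since $\gamma\in\sL^1\cap\sL^\infty$ is monotonically decreasing), $\partial_x \cW$ admits the clean representation
\[
\partial_x \cW(t,x) = \tfrac{1}{\eta}\int_0^\infty \bigl(-\gamma'(z)\bigr)\bigl[V(q^\eps(t,x+\eta z))-V(q^\eps(t,x))\bigr]\dd z,
\]
in which $-\gamma'\geq 0$ (monotonicity of $\gamma$) and $V'\leq 0$ conspire to produce a definite sign at extrema of $q^\eps(t,\cdot)$. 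At any maximiser $x^\ast$ one has $q^\eps(t,x^\ast+\eta z)\leq q^\eps(t,x^\ast)$, hence $V(q^\eps(t,x^\ast+\eta z))\geq V(q^\eps(t,x^\ast))$ and $\partial_x \cW(t,x^\ast)\geq 0$; combined with $\partial_x q^\eps(t,x^\ast)=0$ and the nonnegativity $q^\eps\geq 0$ from \cref{theo:existence_uniqueness}, the non-conservative form yields $\partial_t q^\eps(t,x^\ast)\leq 0$. A Danskin-type envelope argument then promotes this to $\tfrac{d}{dt}\esssup_x q^\eps(t,\cdot)\leq 0$, and the mirror argument at a minimiser produces the lower bound.

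For (ii) the $\sC([0,T];\sL^1(\R))$ convergence of \cref{lem:stability} yields a.e.\ convergence along a subsequence, which preserves the pointwise bounds (noting $\essinf q_0\leq \phi_\eps\ast q_0\leq \esssup q_0$ for any standard mollifier). For (iii) the breakdown time $T^\ast$ of \cref{theo:existence_uniqueness} depends only on $\|q_0\|_\infty$ and the structural constants of $V$ and $\gamma$, so the maximum principle being propagated without deterioration permits a standard concatenation argument from $T^\ast$ onward. The principal obstacle is the extremum sign of $\partial_x \cW$ in (i): a naive differentiation of $\cW$ produces two boundary-type contributions of opposite sign ($-\tfrac{\gamma(0)}{\eta}V(q^\eps(t,x))$ and a positive convolution involving $-\gamma'$), and only the integration-by-parts rewriting above---exploiting both $\gamma'\leq 0$ and $V'\leq 0$ simultaneously---makes the favourable cancellation transparent.
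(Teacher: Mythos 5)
Your proof is correct and follows essentially the same route as the paper: mollify via \cref{lem:stability}, work in the non-conservative form, show $\partial_x\cW\geq 0$ at a spatial maximiser so $\partial_t q^\eps\leq 0$ there, pass the uniform $\sL^\infty$ bound to the limit, and bootstrap short-time existence. The only difference is cosmetic: you add and subtract $V(q^\eps(t,x))$ after the integration by parts to obtain the telescoping form $\tfrac{1}{\eta}\int_0^\infty(-\gamma'(z))\bigl[V(q^\eps(t,x+\eta z))-V(q^\eps(t,x))\bigr]\dd z$, whereas the paper keeps the two contributions separate and bounds them by factoring $V(q^\eps(t,x))$ out of the integral; both exploit the same sign structure ($\gamma'\leq 0$, $V$ nonincreasing, $q^\eps\geq 0$) and the same fact $\lim_{z\to\infty}\gamma(z)=0$.
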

\begin{proof}
We only show the maximum principle and only the upper bound.
Approximating the initial datum as in \cref{lem:stability} we obtain the corresponding solution
\(q^{\eps}\in \sC^{1}(\OT)\) so that it indeed satisfies the nonlocal conservation law in the classical sense. Then, we have for \((t,x)\in\OT\)
\begin{align}
    \partial_{t}q^{\eps}(t,x)&=-\cW[V(q^{\eps}),\gamma](t,x)\partial_{x}q^{\eps}(t,x)-\partial_{x}\cW[V(q^{\eps}),\gamma](t,x)q^{\eps}(t,x)\label{eq:maximum_principle_1}
    \intertext{assuming that for given \(t\in[0,T]\) we are at a maximal point \(x\in\R\), i.e.\ \(\partial_{x}q^{\eps}(t,x)=0\)}
    &=-\partial_{x}\cW[V(q^{\eps}),\gamma](t,x)q^{\eps}(t,x)\label{eq:maximum_principle_2}\end{align}
As \(q^{\eps}\geqq 0\) thanks to \cref{theo:existence_uniqueness}, we only need to show that \(\partial_{x}\cW\) is positive at the maximal value \(x\). To this end, write
\begin{align*}
\partial_{x}\cW[V(q^{\eps}),\gamma](t,x)&=-\tfrac{1}{\eta}\gamma(0)V(q^{\eps}(t,x))-\tfrac{1}{\eta^2}\int_{x}^{\infty}\!\!\!\!\gamma'\big(\tfrac{y-x}{\eta}\big)V(q^{\eps}(t,y))\dd y
\intertext{and as \(\gamma'\leqq 0\) and \(q^{\eps}(t,x)\) maximal so that \(V(q^{\eps}(t,x))\) minimal}
&\geq -\tfrac{1}{\eta}\gamma(0)V(q^{\eps}(t,x))-\tfrac{1}{\eta^2}V(q^{\eps}(t,x))\int_{x}^{\infty}\gamma'\big(\tfrac{y-x}{\eta}\big)\dd y\\
&\geq \tfrac{1}{\eta}V(q^{\eps}(t,x))\lim\nolimits_{y\rightarrow\infty}\gamma\big(\tfrac{y-x}{\eta}\big) = 0
\end{align*}
as \(\gamma\in\sTV(\R_{>0})\cap\sL^{1}(\R_{>0}) \implies \lim_{y\rightarrow\infty}\gamma\big(\frac{y-x}{\eta}\big)=0\ \forall x\in\R\).

However, this means that in the maximal point \(x\in\R\) the time derivative in \cref{eq:maximum_principle_1} is nonpositive, meaning that the maximum cannot increase further. From this, the upper bound on the smoothed solution follows and as these bounds are uniform, they also hold in the limit for the non-smoothed version.

The lower bound can be derived analogously, concluding the proof.
\end{proof}
\begin{remark}[Other ways to obtain the previous results]
The existence of weak solutions and the maximum principle can be proven by following the lines of \cite{friedrich2018godunov}.
As in the latter work the problem in Definition \ref{defi:problem_setup} is considered with a compactly supported kernel,
we need to restrict the support of the kernel onto a compact interval (in an appropriate manner) such that the convergence of the numerical scheme in \cite{friedrich2018godunov} is ensured.
In particular, the estimates obtained in \cite{friedrich2018godunov} remain valid.
We do not go into details here.
\end{remark}

\section{Fundamental results on (local) conservation laws}\label{sec:local}
As we will study the convergence of solutions \(q_{\eta}\) for the Cauchy problem in \cref{defi:problem_setup} we need to cover the well-established theory of local conservation laws which is for instance fundamentally described in \cite{bressan,godlewski,eymard}.

\begin{definition}[The local conservation law]\label{defi:problem_setup_local}
Given \cref{defi:problem_setup} we call the Cauchy problem, the conservation law with initial datum \(q_{0}\),
\begin{align*}
\partial_{t}q(t,x) + \partial_{x}\big(V(q(t,x))q(t,x)\big)&=0,&& (t,x)\in\OT,\\
q(0,x)&=q_{0}(x),&& x\in\R,
\end{align*}
the local conservation law related to the nonlocal (in the velocity) conservation law in \cref{defi:problem_setup}.
\end{definition}
For the definition of weak solutions for local conservation laws we refer to \cref{defi:weak_solution} when replacing the nonlocal term by \(V(q)\). However, as it is well-known weak solutions are not necessarily unique in particular as solutions develop discontinuities in finite time. This is why one prescribes additional assumptions on the weak solution, a so called entropy condition. In this work, we will use entropy-flux pairs as it will be detrimental in our later analysis that we can take advantage of a specific entropy to pass to the limit. However, in \cref{rem:oleinik} we will later also discuss Oleinik's entropy condition \cite{oleinik_english,oleinik}.
\begin{definition}[Entropy solution -- entropy flux pair]\label{defi:entropy_solution}
Let \cref{ass:input_datum} hold. Then, we call \(q\in \sC\big([0,T];\sL^{1}_{\loc}(\R)\big)\cap \sL^{\infty}((0,T);\sL^{\infty}(\R)\cap \sTV(\R))\) an entropy solution to \cref{defi:problem_setup_local} iff it satisfies
\(\forall\alpha\in \sC^{2}(\R)\) convex with \(\beta\in \sC^{1}(\R)\) so that
\[
\beta'(x)=\alpha'(x)\big(V(x)+xV'(x)\big)\quad\forall x\in\R
\] \(\forall\phi\in \sC^{1}_{\text{c}}\big((-42;T)\times\R;\R_{\geq0}\big)\)
\begin{equation}
\begin{split}
\mEF[\phi,\alpha,q]&\:\iint_{\OT}\alpha(q(t,x))\partial_{t}\phi(t,x)+\beta(q(t,x))\partial_{x}\phi(t,x)\dd x\dd t\\
&\qquad +\int_{\R}\alpha(q_{0}(x))\phi(0,x)\dd x\geq 0 .
\end{split}
\label{ineq:Entropy_inequality}
\end{equation}
\end{definition}
Having this definition, we obtain the famous existence and uniqueness result for local scalar conservation laws:
\begin{theorem}[Existence\ \& Uniqueness of entropy solutions]\label{theo:local_conservation_existence_uniqueness}
The local conservation law stated in \cref{defi:problem_setup_local} admits a unique entropy solution in the sense of \cref{defi:entropy_solution}.
\end{theorem}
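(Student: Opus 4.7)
The plan is to recognize that \cref{defi:problem_setup_local} is the standard scalar Cauchy problem $\partial_{t}q+\partial_{x}f(q)=0$ with flux $f(q)\: qV(q)\in\sC^{2}(\R)$, and that the admissibility condition in \cref{defi:entropy_solution} is nothing but the classical entropy-flux pair inequality: the compatibility relation $\beta'(x)=\alpha'(x)\big(V(x)+xV'(x)\big)=\alpha'(x)f'(x)$ is exactly the defining identity of an entropy-flux pair $(\alpha,\beta)$ for the flux $f$. Hence \cref{theo:local_conservation_existence_uniqueness} reduces to the Kruzhkov-Lax well-posedness theorem applied to a $\sC^{2}$ flux and initial datum $q_{0}\in\sL^{\infty}(\R;\R_{\geq0})\cap\sTV(\R)$.

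For existence, I would use the vanishing-viscosity regularization $\partial_{t}q_{\nu}+\partial_{x}f(q_{\nu})=\nu\partial_{xx}q_{\nu}$ (or, equivalently, a monotone finite-volume scheme such as Godunov or Lax-Friedrichs). Classical parabolic contraction and comparison arguments provide, starting from $q_{0}\in\sL^{\infty}\cap\sTV$, uniform $\sL^{\infty}$ and $\sTV$ bounds together with $\sL^{1}$-in-time equicontinuity for $\{q_{\nu}\}_{\nu>0}$. Helly's selection theorem then yields a subsequence converging in $\sC([0,T];\sL^{1}_{\,\loc}(\R))$ to some $q$ lying in the claimed class. Multiplying the viscous equation by $\alpha'(q_{\nu})\phi$ and integrating by parts gives the viscous entropy identity
\begin{equation*}
\mEF[\phi,\alpha,q_{\nu}]=\nu\!\iint_{\OT}\!\!\big(\alpha(q_{\nu})\partial_{xx}\phi-\alpha''(q_{\nu})|\partial_{x}q_{\nu}|^{2}\phi\big)\dd x\dd t,
\end{equation*}
and since $\alpha$ is convex and $\phi\geq0$, the right-hand side is bounded above by $\nu\,\|\alpha(q_{\nu})\partial_{xx}\phi\|_{\sL^{1}(\OT)}$, which vanishes as $\nu\downarrow 0$. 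Passing to the limit yields \cref{ineq:Entropy_inequality} for every admissible $\sC^{2}$ pair.

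For uniqueness, I would recover Kruzhkov's entropy inequalities from the $\sC^{2}$-family in \cref{defi:entropy_solution}. Approximate $\alpha_{k}(\cdot)\:|\cdot-k|$ by a sequence of smooth convex entropies $\alpha_{k,\delta}\in\sC^{2}(\R)$ with $\alpha_{k,\delta}\to\alpha_{k}$ locally uniformly and $\alpha_{k,\delta}'\to\sgn(\cdot-k)$ a.e.; the compatible flux determined by $\beta_{k,\delta}'=\alpha_{k,\delta}' f'$ satisfies, by dominated convergence applied to $\int_{k}^{x}\alpha_{k,\delta}'(y)f'(y)\dd y$, the limit $\beta_{k,\delta}(x)\to \sgn(x-k)\big(f(x)-f(k)\big)$ up to an additive constant that drops out of \cref{ineq:Entropy_inequality}. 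Inserting $(\alpha_{k,\delta},\beta_{k,\delta})$ into \cref{ineq:Entropy_inequality} and sending $\delta\downarrow 0$ delivers the full Kruzhkov entropy inequality for every $k\in\R$, after which the classical doubling-of-variables argument yields the $\sL^{1}$-contraction
\begin{equation*}
\|q_{1}(t,\cdot)-q_{2}(t,\cdot)\|_{\sL^{1}(\R)}\leq\|q_{0,1}-q_{0,2}\|_{\sL^{1}(\R)}\qquad\forall t\in[0,T]
\end{equation*}
between any two entropy solutions, and in particular uniqueness.

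The main obstacle is precisely this last recovery step: verifying that the restricted class of $\sC^{2}$ convex entropies equipped with the compatible flux $\beta'=\alpha' f'$ is rich enough to generate the Kruzhkov family $|\cdot-k|$. This is a routine mollification/density argument, but it is what connects the particular formulation chosen in \cref{defi:entropy_solution} (which the authors will later exploit with a single tailored entropy) with the standard scalar theory; once established, well-posedness is a direct invocation of classical results for scalar conservation laws with $\sC^{2}$ flux and $\sL^{\infty}\cap\sTV$ data.
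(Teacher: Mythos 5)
The paper does not prove this result; it simply points to the standard references (Bressan's wave-front-tracking/semigroup construction, Eymard, Kru\v{z}kov, and Godlewski for the flux-pair formulation). Your proposal actually supplies a proof, and the route you choose — vanishing viscosity for existence, recovery of the Kru\v{z}kov family $|\cdot-k|$ by mollification of the $\sC^2$ entropies, then doubling of variables for uniqueness — is precisely the Kru\v{z}kov route, one of the cited ones; Bressan's wave-front-tracking approach is a genuinely different construction that gives existence and the $\sL^1$ contraction simultaneously via a Lipschitz semigroup. Your observation that the paper's flux-pair admissibility class, restricted to $\sC^2$ convex $\alpha$ with compatible $\beta'=\alpha' f'$, is dense enough to regenerate the Kru\v{z}kov entropies (and hence coincides with the standard entropy solution) is exactly the point the paper silently appeals to, so the reduction is correct.

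There is, however, a sign error in your viscous entropy identity that, read literally, proves the wrong inequality. Multiplying $\partial_t q_\nu + \partial_x f(q_\nu) = \nu\partial_{xx}q_\nu$ by $\alpha'(q_\nu)$ gives
\begin{equation*}
\partial_t\alpha(q_\nu)+\partial_x\beta(q_\nu)=\nu\,\partial_{xx}\alpha(q_\nu)-\nu\,\alpha''(q_\nu)\,|\partial_x q_\nu|^2,
\end{equation*}
and after pairing with $\phi\geq0$ and integrating by parts one obtains
\begin{equation*}
\mEF[\phi,\alpha,q_\nu]=-\nu\iint_{\OT}\alpha(q_\nu)\,\partial_{xx}\phi\dd x\dd t\;+\;\nu\iint_{\OT}\alpha''(q_\nu)\,|\partial_x q_\nu|^2\,\phi\dd x\dd t,
\end{equation*}
with the dissipation term carrying a \emph{plus} sign. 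It is this positive sign that lets you drop the (in general uncontrolled) quantity $\nu\iint\alpha''|\partial_x q_\nu|^2\phi$ and conclude $\mEF[\phi,\alpha,q_\nu]\geq-\nu\,\|\alpha(q_\nu)\partial_{xx}\phi\|_{\sL^1(\OT)}\to0$, i.e.\ $\liminf_{\nu\downarrow0}\mEF\geq0$. As you wrote it, the dissipation has a minus sign, you bound the right-hand side \emph{above}, and this would only give $\limsup\mEF\leq0$ — the opposite of the entropy inequality. Once the sign is corrected the existence argument is sound; the uniqueness part, via mollification of $|\cdot-k|$ and the doubling-of-variables $\sL^1$-contraction, is correct as stated, modulo the usual care that the entropy inequality passes to the limit in $(\alpha,\beta)$ under local uniform convergence on the (bounded) range of the solutions.
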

\begin{proof}
The result can be found in \cite[Theorem 6.3]{bressan} using wave front tracking and a semi-group argument and in \cite[Theorem 19.1]{eymard}.
We also refer to \cite[Theorem 2, Theorem 5, Section 5 Item 4]{kruzkov}. For the introduced flux pair one can find the proof in \cite{godlewski}.
\end{proof}
As we will later take advantage of the fact that under specific conditions the entropy inequality in \cref{ineq:Entropy_inequality} needs to be satisfied for only one entropy flux pair, we provide the following \cref{theo:one_strict_entropy} which can be found in \cite{otto}.
\begin{theorem}[One strictly concave entropy is enough for uniqueness]\label{theo:one_strict_entropy}
Assume that the flux function \(f(x) \: x V(x)\ \forall x\in\R\) is strictly concave, i.e.,
\[
x\mapsto f''(x)=xV''(x)+2V'(x)<0\ \forall x\in\supp(q_{0})
\]
with \(V\) as in \cref{ass:input_datum}
and the function \[q^{*}\in\sC\big([0,T];\sL^{1}_{\loc}(\R)\big)\cap\sL^{\infty}\big((0,T);\sL^{\infty}(\R)\cap\sTV(\R)\big)\] satisfies the entropy condition \[
\mEF[\phi,\alpha,q^{*}]\geq 0\qquad\forall\phi\in\sC^{1}_{\text{c}}\big((-42,T)\times\R;\R_{\geq0}\big)
\] as in \cref{ineq:Entropy_inequality} for \textbf{one} entropy \(\alpha\) which is strictly convex.
 Then, \(q^{*}\) is the unique entropy solution.
\end{theorem}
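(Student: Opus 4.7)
The strategy is to show that the function $q^*$ from the hypothesis must coincide with the unique Kruzkov entropy solution $q$ whose existence is guaranteed by \cref{theo:local_conservation_existence_uniqueness}. Since $q$ satisfies $\mEF[\phi,\alpha,q]\geq 0$ for every convex $\alpha$ (in particular for the distinguished one), the real content of the theorem is the reverse implication: in the presence of genuine nonlinearity of $f$, the single strictly convex entropy inequality already encodes all Kruzkov inequalities, after which uniqueness is classical.

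First I would localise. A maximum-principle argument (either by invoking \cref{theo:maximum_principle} directly or by a truncation argument using the single strictly convex entropy applied to $\min\{q^*,M\}$ and $\max\{q^*,m\}$) shows that the essential range of $q^*$ lies in $[\essinf_{x\in\R} q_0(x),\esssup_{x\in\R} q_0(x)]\subset \supp(q_0)$. On this compact interval the hypothesis gives $f''(x)=xV''(x)+2V'(x)<0$ and $\alpha''(x)>0$; by continuity both quantities are bounded away from zero, so $f$ is \emph{uniformly} strictly concave and $\alpha$ is \emph{uniformly} strictly convex on the range that matters. This is the genuine-nonlinearity property that will drive the whole argument.

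The central step is a rigidity result of Panov-type (as encoded in \cite{otto}): an $\sL^\infty\cap\sBV$ weak solution whose essential range avoids any interval of affinity of $f$ and which satisfies $\mu^* \: -\partial_t\alpha(q^*)-\partial_x\beta(q^*)\geq 0$ in $\OT$ must satisfy the Kruzkov inequalities
\[
\partial_t |q^*-k| + \partial_x\!\big(\sgn(q^*-k)(f(q^*)-f(k))\big) \leq 0\qquad\forall k\in\R.
\]
The $\sBV$ regularity built into the admissible class of \cref{theo:one_strict_entropy} is what makes this tractable: the jump set of $q^*$ is $\mathcal{H}^1$-rectifiable, and $\mu^*$ decomposes into an absolutely continuous part (which vanishes wherever the weak formulation holds classically) and a singular part concentrated on the shocks. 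On each shock $(u_-,u_+)$, Rankine--Hugoniot combined with $\mu^*\geq 0$ and the strict convexity of $\alpha$ against the strictly concave chord of $f$ forces the Lax/Oleinik admissibility condition $u_-<u_+$ (the correct direction for a strictly concave flux). Since Lax admissibility of every shock of a strictly concave flux is equivalent to the Kruzkov inequality for every $k$, the claimed family follows.

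Once $q^*$ is known to be a Kruzkov entropy solution with datum $q_0$, the classical doubling-of-variables argument \cite{kruzkov} applied to the pair $(q^*,q)$ yields $\|q^*(t,\cdot)-q(t,\cdot)\|_{\sL^1(\R)}=0$ for a.e.\ $t\in[0,T]$, and therefore $q^*=q$. The main obstacle is the middle (rigidity) step: the passage from one entropy to all Kruzkov entropies is non-trivial and relies crucially on genuine nonlinearity, supplied here by the strict concavity assumption on $\supp(q_0)$. Without the $\sBV$ regularity one would need the finer machinery of De Lellis--Otto--Westdickenberg, but in the present function class a shock-by-shock analysis suffices and keeps the argument essentially self-contained.
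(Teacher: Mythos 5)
The paper's own proof of this theorem is a one-line citation to \cite[Theorem 2.3 \& Corollary 2.5]{otto} together with the remark that the strictly convex case carries over to the strictly concave one; no argument is given. Your proposal takes the same vantage point (one-entropy rigidity, then Kruzkov doubling), but instead of outsourcing the rigidity step you sketch how it is actually proved in the $\sBV$ setting: decompose the entropy-defect measure $\mu^*$ along the rectifiable jump set, use Rankine--Hugoniot together with the single strictly convex $\alpha$ and the strictly concave chord of $f$ to force Lax admissibility ($u_-<u_+$ for a concave flux) across every jump, deduce all Kruzkov inequalities, and finish with the doubling-of-variables argument from \cite{kruzkov}. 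That is a faithful picture of the Panov-type $\sBV$ proof underlying the reference and it correctly works out the sign conventions for the concave flux that the paper only gestures at.

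There is, however, one concrete gap in your outline: the localization step. You propose to pin the essential range of $q^*$ inside $[\essinf_x q_0(x),\esssup_x q_0(x)]$ either by invoking \cref{theo:maximum_principle} or by a truncation argument with the single entropy. The first option does not apply: \cref{theo:maximum_principle} is a maximum principle for the \emph{nonlocal} dynamics and says nothing about a candidate weak solution $q^*$ of the \emph{local} Cauchy problem. The truncation option is not circular in principle, but it hides real work: the usual $\sL^\infty$ bound for entropy solutions uses the whole Kruzkov family $|q-k|$, whereas here only one strictly convex $\alpha$ is available, and showing that $\min\{q^*,M\}$ (resp.\ $\max\{q^*,m\}$) is again an entropy subsolution for this particular $\alpha$ requires control on the behaviour of $\alpha$ and $f$ outside $[m,M]$ that is not automatic. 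In \cite{otto} this is handled by imposing the strict convexity/concavity hypotheses on the full relevant interval (which the theorem statement here is also trying to do, albeit awkwardly, via the clause ``$\forall x\in\supp(q_0)$'', which is a range statement in disguise). So either the range bound should be absorbed into the hypotheses, or the truncation argument needs to be carried out explicitly; as written this step does not stand on its own.

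A secondary point: a genuine $\sBV$ shock-by-shock proof must also dispose of the Cantor part of $Dq^*$, not only the jump and absolutely continuous parts; this is standard via Volpert's calculus and the constraint the PDE imposes on the diffuse part, but it is worth flagging since your sketch discusses only the shocks.
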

\begin{proof}
The result has been proven in \cite[Theorem 2.3\ \& Corollary 2.5]{otto} for strictly convex fluxes \(f\) and with a small adjustment it also holds for strictly concave flux functions.
\end{proof}

\section{Convergence -- Exponential kernel and arbitrary initial datum}\label{sec:convergence_exponential}
In this section, we will state conditions on velocity \(V\) and initial datum \(q_{0}\) under which the solution of the nonlocal (in the velocity) conservation law converges to the entropy solution of the local conservation law. We assume that the kernel is of exponential type which is inspired by the approach and ideas in \cite{coclite2020general}.

We start in the following \cref{sec:uniform_TV} with proving uniform \(\sTV\) bounds on the nonlocal velocity:
\subsection{Uniform total variation bounds on the nonlocal velocity}\label{sec:uniform_TV}
In this section, we derive a total variation bound on the nonlocal velocity uniformly in \(\eta\in\R_{>0}\). To this end, we first deduce dynamics directly on the nonlocal velocity for nonlocal kernels of exponential type:
\begin{lemma}[Dynamics in the nonlocal velocity]\label{lem:dynamics_W}
Given the unique weak solution of the nonlocal conservation law in \cref{defi:problem_setup} and recall the definition of the nonlocal velocity \cref{eq:nonlocal}, we have for the nonlocal term with exponential weight for \(\eta\in\R_{>0}\) and \((t,x)\in\OT\) (with an abuse of notation)
 \begin{equation}
\cW_{\eta}[V(q_{\eta})](t,x)\coloneqq \cW_{\eta}\big[V(q_{\eta}),\e^{-(\cdot)}\big](t,x)=\tfrac{1}{\eta}\int_{x}^{\infty}\!\!\!\!\e^{\frac{x-y}{\eta}}V(q_{\eta}(t,y))\dd y,\label{eq:relation_V_W}
\end{equation}
that \(\cW_{\eta}[V(q_{\eta})]\in \sW^{1,\infty}(\OT)\) and in addition that \(\cW_{\eta}[V(q)]\) satisfies the following dynamics for \((t,x)\in\OT\) a.e.
\begin{equation}
\begin{split}
&\partial_{t} \cW_{\eta}[V(q_{\eta})](t,x)\\
&=-\tfrac{1}{\eta}\int_{x}^{\infty} \e^{\frac{x-z}{\eta}}V'(q_{\eta}(t,z))\partial_{z}\cW_{\eta}[V(q_{\eta})](t,z) q_{\eta}(t,z) \dd z+\tfrac{1}{\eta}\big(\cW_{\eta}[V(q_{\eta})](t,x)\big)^{2}\\
    &\quad -\tfrac{1}{\eta^{2}}\int_{x}^{\infty}\e^{\frac{x-z}{\eta}}V(q_{\eta}(t,z))^{2}\dd z -\partial_{x}\cW_{\eta}[V(q_{\eta})](t,x)\cW_{\eta}[V(q_{\eta})](t,x).
\end{split}
\label{eq:W}
\end{equation}
\end{lemma}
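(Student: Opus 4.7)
The plan is to exploit the fact that the exponential kernel diagonalizes under differentiation, which yields a pointwise algebraic relation between $\cW_\eta[V(q_\eta)]$ and $V(q_\eta)$ that is the heart of the whole computation. First, I would differentiate the explicit representation \cref{eq:relation_V_W} directly in $x$: pulling the derivative under the integral and using $\partial_x e^{(x-y)/\eta}=\tfrac1\eta e^{(x-y)/\eta}$ together with the Leibniz boundary term at $y=x$, I obtain
\begin{equation*}
\partial_x\cW_\eta[V(q_\eta)](t,x)=\tfrac{1}{\eta}\bigl(\cW_\eta[V(q_\eta)](t,x)-V(q_\eta(t,x))\bigr),
\end{equation*}
equivalently $V(q_\eta)=\cW_\eta-\eta\,\partial_x\cW_\eta$. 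Combined with the $\sL^\infty$ bound on $q_\eta$ from \cref{theo:maximum_principle} and the $\sC^2$ regularity of $V$, this already gives $\cW_\eta\in\sL^\infty((0,T);\sW^{1,\infty}(\R))$; the time regularity will fall out at the end from the explicit form of \cref{eq:W}.

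Next, to justify differentiation in $t$ under the integral, I would first work with the smooth approximations $q_\eta^\eps\in\sC^1(\OT)$ from \cref{lem:stability} so that $\partial_t V(q_\eta^\eps)=V'(q_\eta^\eps)\partial_t q_\eta^\eps$ pointwise. Substituting the conservation law $\partial_t q_\eta^\eps=-\partial_y(q_\eta^\eps\cW_\eta[V(q_\eta^\eps)])$ and expanding by the product rule gives
\begin{equation*}
\partial_t\cW_\eta(t,x)=-\tfrac1\eta\!\int_x^\infty\!\!e^{\frac{x-y}\eta}V'(q_\eta^\eps)q_\eta^\eps\,\partial_y\cW_\eta\,\dd y-\tfrac1\eta\!\int_x^\infty\!\!e^{\frac{x-y}\eta}\cW_\eta\,\partial_y V(q_\eta^\eps)\,\dd y.
\end{equation*}
The first integral is already the first summand of \cref{eq:W}; the entire remaining task is to rewrite the second integral.

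For that integral I would integrate by parts in $y$, with the boundary term at $y\to\infty$ vanishing by the exponential decay and the $\sL^\infty$ bound on $\cW_\eta V(q_\eta^\eps)$, and the boundary term at $y=x$ producing $\tfrac1\eta\cW_\eta(t,x)V(q_\eta^\eps(t,x))$. The interior contributions split into $-\tfrac1{\eta^2}\int_x^\infty e^{(x-y)/\eta}\cW_\eta V(q_\eta^\eps)\dd y$ and $+\tfrac1\eta\int_x^\infty e^{(x-y)/\eta}V(q_\eta^\eps)\partial_y\cW_\eta\dd y$. Here I would apply the first paragraph's identity twice: once in the form $\cW_\eta V(q_\eta^\eps)=V(q_\eta^\eps)^2+\eta V(q_\eta^\eps)\partial_y\cW_\eta$ to collapse these two interior integrals into precisely $-\tfrac1{\eta^2}\int_x^\infty e^{(x-y)/\eta}V(q_\eta^\eps)^2\dd y$, and once in the form $V(q_\eta^\eps(t,x))=\cW_\eta(t,x)-\eta\,\partial_x\cW_\eta(t,x)$ to rewrite the boundary contribution as $\tfrac1\eta\cW_\eta^2(t,x)-\cW_\eta(t,x)\partial_x\cW_\eta(t,x)$. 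Summing the four resulting terms reproduces \cref{eq:W}.

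Finally, I would pass $\eps\to 0$: by \cref{lem:stability}, $q_\eta^\eps\to q_\eta$ in $\sC([0,T];\sL^1(\R))$, and the uniform $\sL^\infty$ and $\sTV$ bounds allow every integrand appearing in the derivation above to be handled by dominated convergence (both integrals use the $\sL^1$ weight $y\mapsto e^{(x-y)/\eta}\mathbf{1}_{[x,\infty)}$). The identity thus holds for the weak solution, valid for a.e.\ $(t,x)\in\OT$. I expect the main obstacle to be the bookkeeping of the integration-by-parts step: one must choose the right substitutions at the right moment so that the $\cW_\eta V(q_\eta)$ and $V(q_\eta)\partial_y\cW_\eta$ pieces cancel down to the single $V(q_\eta)^2$ integral, rather than spawning additional interior terms. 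Once the algebra is arranged correctly, regularity and passage to the limit are routine.
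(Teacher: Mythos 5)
Your proposal is correct and follows essentially the same route as the paper: smooth via \cref{lem:stability}, substitute the PDE into the time derivative of $\cW_\eta$, integrate by parts in the term containing $\partial_y V(q_\eta^\eps)$, and then systematically deploy the pointwise identity $\eta\partial_x\cW_\eta = \cW_\eta - V(q_\eta)$ (which the paper records as \cref{eq:nonlocal_derivative}) to collapse the interior integrals and rewrite the boundary term, before passing $\eps\to 0$. Your explicit bookkeeping of how the identity is applied twice—once to merge the two interior integrals into the $V(q_\eta)^2$ term and once to re-express the boundary term—matches the paper's manipulations exactly, and your observation that the time component of the $\sW^{1,\infty}(\OT)$ regularity is read off from the derived equation is a sound completion of a point the paper leaves implicit.
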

\begin{proof}
The proof is a direct consequence of the identity in \cref{eq:relation_V_W} which states for the spatial derivative that
\begin{equation}
\eta\partial_{x}\cW_{\eta}[V(q_{\eta})](t,x)=\cW_{\eta}[V(q_{\eta})](t,x)-V(q_{\eta}(t,x)),\quad \forall (t,x)\in\OT.\label{eq:nonlocal_derivative}
\end{equation}
 As some of the necessary transformations require higher regularity, the stability result in \cref{lem:stability} to smooth the solution \(q_{\eta}\) by \(q_{\eta}^{\eps},\ \eps\in\R_{>0},\) plays a crucial role: So let us assume that we have such a smooth solution \(q_{\eta}^{\eps}\in\sC^{1}(\OT)\). Then, we can start with the time derivative of the nonlocal velocity and have in a weak sense for \((t,x)\in\OT\)
 \begin{align*}
     \partial_{t}\cW_{\eta}[V(q_{\eta}^{\eps})](t,x)&=\tfrac{1}{\eta}\int_{x}^{\infty}\e^{\frac{x-y}{\eta}}V'(q_{\eta}^{\eps}(t,y))\partial_{t}q_{\eta}^{\eps}(t,y)\dd y
     \intertext{and as \(q_{\eta}^{\eps}\) is a classical solution of \cref{defi:problem_setup}}
     &=-\tfrac{1}{\eta}\int_{x}^{\infty}\e^{\frac{x-y}{\eta}}V'(q_{\eta}^{\eps}(t,y))\partial_{y}\big(q_{\eta}^{\eps}(t,y)\cW_{\eta}[V(q_{\eta}^{\eps})](t,y)\big)\dd y\\
     &=-\tfrac{1}{\eta}\int_{x}^{\infty}\e^{\frac{x-y}{\eta}}V'(q_{\eta}^{\eps}(t,y))q_{\eta}^{\eps}(t,y)\partial_{y}\cW_{\eta}[V(q_{\eta}^{\eps})](t,y)\dd y\\
    &\qquad -\tfrac{1}{\eta}\int_{x}^{\infty}\e^{\frac{x-y}{\eta}}V'(q_{\eta}^{\eps}(t,y))\partial_{y}q_{\eta}^{\eps}(t,y)\cW_{\eta}[V(q_{\eta}^{\eps})](t,y)\dd y
\intertext{and integration by parts in the latter term}
     &=-\tfrac{1}{\eta}\int_{x}^{\infty}\e^{\frac{x-y}{\eta}}V'(q_{\eta}^{\eps}(t,y))q_{\eta}^{\eps}(t,y)\partial_{y}\cW_{\eta}[V(q_{\eta}^{\eps})](t,y)\dd y\\
    &\qquad +\tfrac{1}{\eta}\int_{x}^{\infty}\e^{\frac{x-y}{\eta}}V(q_{\eta}^{\eps}(t,y))\partial_{y}\cW_{\eta}[V(q_{\eta}^{\eps})](t,y)\dd y\\
    &\qquad -\tfrac{1}{\eta^{2}}\int_{x}^{\infty}\e^{\frac{x-y}{\eta}}V(q_{\eta}^{\eps}(t,y))\cW_{\eta}[V(q_{\eta}^{\eps})](t,y)\dd y\\
    &\qquad +\tfrac{1}{\eta}V(q_{\eta}^{\eps}(t,x))\cW_{\eta}[V(q_{\eta}^{\eps})](t,x)\\
    &\overset{\hspace{-.4cm}\eqref{eq:nonlocal_derivative}\hspace{-.4cm}}{=}
      -\tfrac{1}{\eta}\int_{x}^{\infty}\e^{\frac{x-y}{\eta}}V'(q_{\eta}^{\eps}(t,y))q_{\eta}^{\eps}(t,y)\partial_{y}\cW_{\eta}[V(q_{\eta}^{\eps})](t,y)\dd y\\
       &\qquad -\tfrac{1}{\eta^2}\int_{x}^{\infty}\e^{\frac{x-y}{\eta}}V(q_{\eta}^{\eps}(t,y))^{2}\dd y\\
       &\qquad+\tfrac{1}{\eta}\big(\cW_{\eta}[V(q_{\eta}^{\eps})](t,x)\big)^{2}-\cW_{\eta}[V(q_{\eta}^{\eps})](t,x)\partial_{x}\cW_{\eta}[V(q_{\eta}^{\eps})](t,x).
\end{align*}
However, letting \(\eps\rightarrow 0\) we obtain the claimed \cref{eq:W}.
\end{proof}
Having derived the equation in \(\cW_{\eta}[V(q_{\eta})]\) in \cref{lem:dynamics_W} we can use this to obtain a uniform total variation estimate in the nonlocal version of the velocity. However, before doing so we prove another estimate which is required afterwards and establishes some behavior of the spatial derivative of the nonlocal term around negative infinity.
\begin{lemma}[Vanishing \(\partial_{2}\cW_{\eta}\) at negative infinity]\label{lem:W_x_vanishing}
Let \cref{ass:input_datum} hold. Then, we have for \(q\in \sTV(\R)\cap \sC^{1}(\R)\) with \(\cW_{\eta}\) the nonlocal operator as in \cref{eq:relation_V_W}
\[
\lim_{x\rightarrow-\infty} \partial_{x}\cW_{\eta}[V(q)](x)=0.
\]
\end{lemma}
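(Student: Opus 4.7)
The plan is to reduce the statement to a simple limit computation via the pointwise identity already derived in the proof of \cref{lem:dynamics_W}, namely
\begin{equation*}
\eta\,\partial_{x}\cW_{\eta}[V(q)](x)=\cW_{\eta}[V(q)](x)-V(q(x)),
\end{equation*}
which follows from differentiating the explicit exponential formula in \cref{eq:relation_V_W} under the integral sign (the regularity $q\in\sC^{1}(\R)$ makes this rigorous). Thus it suffices to prove that $\cW_{\eta}[V(q)](x)-V(q(x))\to 0$ as $x\to-\infty$.

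First I would observe that a function in $\sTV(\R)$ can be written as the difference of two bounded monotone functions and therefore possesses a limit at $-\infty$; call it $q_{-\infty}\: \lim_{x\to-\infty}q(x)$. Since $V\in\sC^{2}(\R)$ (in particular continuous), this immediately yields $V(q(x))\to V(q_{-\infty})$ as $x\to-\infty$. For the nonlocal term I would perform the substitution $z=(y-x)/\eta$ to obtain the translation-friendly representation
\begin{equation*}
\cW_{\eta}[V(q)](x)=\int_{0}^{\infty}\ee^{-z}V\big(q(x+\eta z)\big)\dd z,
\end{equation*}
and note that for each fixed $z>0$ the integrand converges to $\ee^{-z}V(q_{-\infty})$ as $x\to-\infty$. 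Since $q\in\sTV(\R)$ is bounded and $V$ is continuous, $|V(q(x+\eta z))|$ is dominated by a constant, which is integrable against $\ee^{-z}$ on $\R_{\geq 0}$. Dominated convergence then gives $\cW_{\eta}[V(q)](x)\to V(q_{-\infty})\int_{0}^{\infty}\ee^{-z}\dd z=V(q_{-\infty})$.

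Combining these two limits in the identity above yields $\eta\,\partial_{x}\cW_{\eta}[V(q)](x)\to 0$, and dividing by $\eta\in\R_{>0}$ concludes the proof. There is no real obstacle here; the only point requiring care is justifying that BV functions on $\R$ admit one-sided limits at infinity, which I would simply cite from the standard decomposition of BV into monotone parts. Everything else is dominated convergence together with the pointwise derivative identity that is already available from the proof of \cref{lem:dynamics_W}.
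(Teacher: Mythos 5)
Your proof is correct, but it takes a genuinely different route from the paper's. The paper estimates $\partial_{x}\cW_{\eta}[V(q)]$ directly, writing it as $\cW_{\eta}[V'(q)q']$ by differentiating under the integral sign (using $q\in\sC^{1}$), splitting the integration domain at a point $y^{*}$, bounding the first piece by $\|V'\|_{\sL^{\infty}}|q|_{\sTV(-\infty,y^{*})}$ which is made arbitrarily small, and letting the exponential weight kill the second piece as $x\to-\infty$. You instead exploit the structural identity $\eta\,\partial_{x}\cW_{\eta}[V(q)]=\cW_{\eta}[V(q)]-V(q)$ (which is \cref{eq:nonlocal_derivative} in the paper) and reduce the claim to showing that both $\cW_{\eta}[V(q)](x)$ and $V(q(x))$ converge to the same value $V(q_{-\infty})$ as $x\to-\infty$, where the existence of $q_{-\infty}$ is the standard consequence of finite total variation; the convergence of $\cW_{\eta}$ then follows from the substitution $z=(y-x)/\eta$ and dominated convergence. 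Both arguments hinge on the same underlying BV fact (the paper uses that the tail variation $|q|_{\sTV(-\infty,y^{*})}$ vanishes as $y^{*}\to-\infty$, you use the equivalent existence of the one-sided limit), but your route is more conceptual: it separates the problem into two scalar limits and avoids handling $q'$ altogether, so in principle it does not even need $q\in\sC^{1}$ beyond justifying that $\cW_{\eta}[V(q)]$ is differentiable (and even that uses only the Leibniz rule for the lower limit, not differentiation of the integrand). The paper's direct estimate, on the other hand, is more explicit and self-contained in that it never names the limit $q_{-\infty}$. Either is a valid proof.
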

\begin{proof}
We have for all \(x\in\R\) and \(y^{*}\in\R_{\geq x}\)
\begin{align*}
    \big|\partial_{x}\cW_{\eta}[V(q)](x)\big|&= \big|\cW_{\eta}\big[V'(q) q'\big](x)\big|\\
    &\leq \tfrac{1}{\eta}\int_{x}^{y^{*}}\e^{\frac{x-y}{\eta}}|V'(q(y))||q'(y)|\dd y+\tfrac{1}{\eta}\int_{y^{*}}^{\infty} \e^{\frac{x-y}{\eta}}|V'(q(y))||q'(y)|\dd y\\
    &\leq \tfrac{1}{\eta}\int_{-\infty}^{y^{*}}|V'(q(y))||q'(y)|\dd y+\tfrac{1}{\eta}\int_{y^{*}}^{\infty} \e^{\frac{x-y}{\eta}}|V'(q(y))||q'(y)|\dd y\\
    &\leq \tfrac{1}{\eta}\|V'\|_{\sL^{\infty}((0,\|q\|_{\sL^{\infty}(\R)}))}|q|_{\sTV(-\infty,y^{*})}+\tfrac{1}{\eta}\int_{y^{*}}^{\infty} \e^{\frac{x-y}{\eta}}|V'(q(y))||q'(y)|\dd y.
\end{align*}
We can chose \(y^{*}\in\R_{<0}\) negative enough so that \(|q|_{\sTV(-\infty,y^{*})}\) is arbitrary small and letting \(x\rightarrow-\infty\) in the second term, we have by the monotone convergence that this term vanishes. Altogether, we obtain
\[
\lim_{x\rightarrow -\infty}|\partial_{x}\cW_{\eta}[V(q_{\eta})](x)|=0.
\]
\end{proof}
The next proposition establishes uniform \(\sTV\) bounds on the nonlocal velocity as long as the velocity function \(V\) satisfies a specific growth condition:
\begin{proposition}[Uniform (in \(\eta\)) total variation estimate of {\(\cW_{\eta}[V(q_{\eta})]\)}]\label{prop:uniform_TV_estimate}
Let \(\cW_{\eta}[V(q_{\eta})]\) as in \cref{lem:dynamics_W} be given, and assume that the velocity function satisfies
\begin{equation}
V'(x)x-V(x)+V\Big(\essinf_{y\in\R} q_{0}(y)\Big)\leq 0\quad \forall x\in \Big(\inf_{y\in\R} q_{0}(y),\|q_{0}\|_{\sL^{\infty}(\R)}\Big).\label{eq:velocity_TV_bound}
\end{equation}
Then, it holds \(\forall t\in[0,T]\)
\[
|\cW_{\eta}[V(q_{\eta})](t,\cdot)|_{\sTV(\R)}\leq |\cW_{\eta}[V(q_{\eta})](0,\cdot)]|_{\sTV(\R)}\leq \|V'\|_{\sL^{\infty}((0,\|q_{0}\|_{\sL^{\infty}(\R)}))}|q_{0}|_{\sTV(\R)}.
\]
\end{proposition}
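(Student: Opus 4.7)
I plan to prove the right-hand inequality by standard convolution facts and the main (time-monotonicity) inequality via a differential-inequality argument on $\|\partial_x W_\eta(t,\cdot)\|_{\sL^1(\R)} = |W_\eta(t,\cdot)|_{\sTV(\R)}$, writing $W_\eta(t,x) \equiv \cW_\eta[V(q_\eta)](t,x)$ and $Z(t,x) \equiv \partial_x W_\eta(t,x)$ for brevity. The second inequality is immediate: since $\gamma(\cdot/\eta)/\eta$ is a probability density on $\R_{\geq 0}$, convolution does not increase total variation, so combined with the $\sBV$ chain rule and the range constraint from \cref{theo:maximum_principle} one obtains
\begin{equation*}
|W_\eta(0,\cdot)|_{\sTV(\R)} \leq |V(q_0)|_{\sTV(\R)} \leq \|V'\|_{\sL^\infty(0,\|q_0\|_{\sL^\infty(\R)})} |q_0|_{\sTV(\R)}.
\end{equation*}

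For the time-monotonicity I would first work with the smooth approximation from \cref{lem:stability} so that $Z$ is classical and all differentiations are legitimate; any $\eps$-uniform bound then transfers to the weak solution in the limit. Differentiating \cref{eq:W} in $x$ and using the exponential identity $\eta Z = W_\eta - V(q_\eta)$ from \cref{eq:nonlocal_derivative} repeatedly to eliminate higher-order convolution derivatives, I expect the transport-type equation
\begin{equation*}
\partial_t Z + W_\eta \partial_x Z = \frac{V'(q_\eta) q_\eta Z - \cW_\eta[V'(q_\eta) q_\eta Z]}{\eta} + \frac{W_\eta^2 - \cW_\eta[V(q_\eta)^2]}{\eta^2}.
\end{equation*}
Multiplying by a smoothed $\sgn(Z)$, integrating over $\R$, and integrating by parts in the transport term yields a differential inequality for $\|Z(t,\cdot)\|_{\sL^1(\R)}$; the boundary terms vanish because $Z\to 0$ as $x\to+\infty$ (the exponential convolution forces $W_\eta\to V(q_\eta)$ pointwise there) and as $x\to-\infty$ by \cref{lem:W_x_vanishing}. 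Two contributions on the right are then directly nonpositive: $\int V'(q_\eta)q_\eta |Z|/\eta\, \dd x \leq 0$ since $V'\leq 0$ and $q_\eta\geq 0$ by \cref{theo:existence_uniqueness}, and $W_\eta^2 - \cW_\eta[V(q_\eta)^2]\leq 0$ by Jensen's inequality applied to the probability kernel.

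The main obstacle is that two indefinite terms remain: $\int Z|Z|\,\dd x$ produced by the integration by parts of the transport, and $-\int \sgn(Z)\,\cW_\eta[V'(q_\eta) q_\eta Z]/\eta\,\dd x$. Here the hypothesis \cref{eq:velocity_TV_bound} is tailor-made to close the estimate: setting $c \: V(\essinf q_0)$ and rewriting it as the pointwise bound $V'(q)q \leq V(q) - c$ on the range of $q_\eta$ guaranteed by \cref{theo:maximum_principle}, I would substitute this bound inside the $\cW_\eta$ operator and exploit the algebraic identities $V(q_\eta) = W_\eta - \eta Z$ and $\cW_\eta[V(q_\eta)] = W_\eta$ to absorb $\int Z|Z|\,\dd x$ against the indefinite convolution contribution, leaving only nonpositive terms. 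Concluding $\tfrac{\dd}{\dd t}\|Z(t,\cdot)\|_{\sL^1(\R)}\leq 0$ and passing to the $\eps\to 0$ limit via \cref{lem:stability} then completes the proof.
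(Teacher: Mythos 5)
Your setup is sound and matches the paper's approach in outline (mollify via \cref{lem:stability}, differentiate \cref{eq:W} in $x$, test with $\sgn(Z)$, integrate by parts in the transport term, invoke \cref{lem:W_x_vanishing} for boundary terms), and the transport equation you write for $Z$ is algebraically correct: indeed $\eta^{-2}\big(W_\eta^2 - \cW_\eta[V(q_\eta)^2]\big) = -\cW_\eta[Z^2]$ by the exponential identity, which is the form the paper uses. However, there is a genuine sign error that breaks your argument. You claim the contribution $\int \sgn(Z)\,\eta^{-2}\big(W_\eta^2 - \cW_\eta[V(q_\eta)^2]\big)\,\dd x = -\int \sgn(Z)\,\cW_\eta[Z^2]\,\dd x$ is ``directly nonpositive by Jensen.'' Jensen gives only the pointwise inequality $W_\eta^2 - \cW_\eta[V(q_\eta)^2]\leq 0$; once multiplied by $\sgn(Z)$ and integrated, the term is indefinite (it is nonnegative wherever $Z<0$), so it cannot simply be discarded. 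This is not a minor slip: you are then left with three indefinite contributions, and your proposed closing step --- absorbing $\int Z|Z|\,\dd x$ against $-\eta^{-1}\int \sgn(Z)\cW_\eta[V'(q_\eta)q_\eta Z]\,\dd x$ --- is the wrong pairing and will not close.

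The pairing that actually works (and is what the paper does) keeps the local pieces together and the convolved pieces together. Namely, write $\eta Z = W_\eta - V(q_\eta)$ and group
\[
\tfrac{1}{\eta}\int_\R |Z|\,V'(q_\eta)q_\eta\,\dd x + \int_\R Z|Z|\,\dd x = \tfrac{1}{\eta}\int_\R |Z|\big(V'(q_\eta)q_\eta + W_\eta - V(q_\eta)\big)\,\dd x,
\]
and separately
\[
-\tfrac{1}{\eta}\int_\R \sgn(Z)\,\cW_\eta[V'(q_\eta)q_\eta Z]\,\dd x - \int_\R \sgn(Z)\,\cW_\eta[Z^2]\,\dd x = -\tfrac{1}{\eta}\int_\R \sgn(Z)\,\cW_\eta\big[\big(V'(q_\eta)q_\eta + W_\eta - V(q_\eta)\big)Z\big]\,\dd x.
\]
The factor $V'(q_\eta)q_\eta + W_\eta - V(q_\eta)$ is $\leq 0$ by \cref{eq:velocity_TV_bound} combined with the maximum principle $W_\eta \leq V(\essinf q_0)$; the first grouped term is then nonpositive, and after Fubini and $\big|\int_{-\infty}^{y}\e^{(x-y)/\eta}\sgn(Z(t,x))\dd x\big|\leq\eta$ the second is bounded above by the negative of the first, giving $\frac{\dd}{\dd t}\int|Z|\,\dd x\leq 0$. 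The right-hand inequality of the proposition (convolution does not increase $\sTV$, plus the $\sBV$ chain rule) you handle correctly.
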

\begin{proof}
Smoothing \(q_{\eta}\) as outlined in \cref{lem:stability} by using a standard mollifier \(\{\phi_{\eps}\}_{\eps\in\R_{>0}}\subseteq\sC^{\infty}(\R)\) as \(q_{0}^{\eps}\), we have \(\cW_{\eta}[V(q_{\eta}^{\eps})]\) smooth as well so that we can differentiate with regard to \(x\in\R\) in \cref{eq:W} and -- leaving out the dependency of \(\cW,q\) with regard to \(\eta\) and \(\eps\) and writing
\(\cW\) instead of \(\cW_{\eta}[V(q_{\eta})]\) -- we have for \((t,z)\in\OT\)
\begin{align}
&\partial_{t} \partial_{z}\cW(t,z)\label{eq:lukas}\\
&=\tfrac{1}{\eta} V'(q(t,z))\partial_{z}\cW(t,z)q(t,z)-\tfrac{1}{\eta^{2}}\int_{z}^{\infty} V'(q(t,x))\e^{\frac{z-x}{\eta}}\partial_{x}\cW(t,x) q(t,x) \dd x\notag\\
    &\quad +\tfrac{1}{\eta^{2}}V(q(t,z))^{2}-\tfrac{1}{\eta^{3}}\int_{z}^{\infty}V(q(t,x))^{2}\e^{\frac{z-x}{\eta}}\dd x\notag\\
    &\quad +\tfrac{2}{\eta}\cW(t,z)\partial_{z}\cW(t,z) -\big(\partial_{z}\cW(t,z)\big)^{2}-\cW(t,z)\partial_{z}^{2}\cW(t,z)\notag
    \intertext{using the identity \(\eta\partial_{x}\cW(t,x)=\cW(t,x)-V(q(t,x))\) derived from \cref{eq:relation_V_W} and established in \cref{eq:W} and an integration by parts}
    &=\tfrac{1}{\eta} V'(q(t,z))\partial_{z}\cW(t,z)q(t,z)-\tfrac{1}{\eta^{2}}\int_{z}^{\infty} V'(q(t,x))\e^{\frac{z-x}{\eta}}\partial_{x}\cW(t,x) q(t,x) \dd x\notag\\
    &\quad -\tfrac{2}{\eta^{2}}\int_{z}^{\infty}(\cW(t,x)-\eta\partial_{x}\cW(t,x))\big(\partial_{x}\cW(t,x)-\eta\partial_{x}^{2}\cW(t,x)\big)\e^{\frac{z-x}{\eta}}\dd x\notag\\
    &\quad +\tfrac{2}{\eta}\cW(t,z)\partial_{z}\cW(t,z) -\big(\partial_{z}\cW(t,z)\big)^{2}-\cW(t,z)\partial_{z}^{2}\cW(t,z)\notag\\
    &=\tfrac{1}{\eta} V'(q(t,z))\partial_{z}\cW(t,z)q(t,z)-\tfrac{1}{\eta^{2}}\int_{z}^{\infty} V'(q(t,x))\e^{\frac{z-x}{\eta}}\partial_{x}\cW(t,x) q(t,x) \dd x\notag\\
    &\quad -\tfrac{2}{\eta^{2}}\int_{z}^{\infty}\!\!\!\!\Big(\cW(t,x)\partial_{x}\cW(t,x)-\eta  \cW(t,x)\partial_{x}^{2}\cW(t,x)-\tfrac{\eta}{2} \big(\partial_{x}\cW(t,x)\big)^2\Big)\e^{\frac{z-x}{\eta}}\dd x\notag\\
    &\quad +\tfrac{2}{\eta}\cW(t,z)\partial_{z}\cW(t,z) -\cW(t,z)\partial_{z}^{2}\cW(t,z)\notag
    \intertext{and another integration by parts in the ``middle'' term}
    &=\tfrac{1}{\eta} V'(q(t,z))\partial_{z}\cW(t,z)q(t,z)-\tfrac{1}{\eta^{2}}\int_{z}^{\infty} V'(q(t,x))\e^{\frac{z-x}{\eta}}\partial_{x}\cW(t,x) q(t,x) \dd x\notag\\
    &\quad -\tfrac{1}{\eta}\int_{z}^{\infty} \big(\partial_{x}\cW(t,x)\big)^{2}e^{\frac{z-x}{\eta}}\dd x -\cW(t,z)\partial_{z}^{2}\cW(t,z).\notag
\end{align}
Computing next the change of total variation, we take advantage of the previously derived identity and have for \(t\in[0,T]\)
\begin{align*}
&\partial_{t}\int_{\R}|\partial_{x}\cW(t,x)|\dd x\\
&\stackrel{\hspace{-.2cm}\eqref{eq:lukas}\hspace{-.2cm}}{=}\tfrac{1}{\eta}\int_{\R}\sgn(\partial_{x}\cW(t,x)) V'(q(t,x))\partial_{x}\cW(t,x) q(t,x)\dd x\notag\\
&\quad -\tfrac{1}{\eta^{2}}\int_{\R}\sgn(\partial_{x}\cW(t,x))\int_{x}^{\infty} V'(q(t,y))\e^{\frac{x-y}{\eta}}\partial_{y}\cW(t,y) q(t,y) \dd y\dd x\notag\\
&\quad -\tfrac{1}{\eta}\int_{\R}\sgn(\partial_{x}\cW(t,x))\int_{x}^{\infty}\big(\partial_{y}\cW(t,y)\big)^{2}\e^{\frac{x-y}{\eta}}\dd y\dd x\notag\\
&\quad -\int_{\R}\sgn(\partial_{x}\cW(t,x)) \cW(t,x)\partial_{x}^{2}\cW(t,x)\dd x\notag
\intertext{and an integration by parts in the last term yields}
&=\tfrac{1}{\eta}\int_{\R}\sgn(\partial_{x}\cW(t,x)) V'(q(t,x))\partial_{x}\cW(t,x) q(t,x)\dd x\notag\\
&\quad -\tfrac{1}{\eta^{2}}\int_{\R}\sgn(\partial_{x}\cW(t,x))\int_{x}^{\infty} V'(q(t,y))\e^{\frac{x-y}{\eta}}\partial_{y}\cW(t,y) q(t,y) \dd y\dd x\notag\\
&\quad -\tfrac{1}{\eta}\int_{\R}\sgn(\partial_{x}\cW(t,x))\int_{x}^{\infty}\big(\partial_{y}\cW(t,y)\big)^{2}\e^{\frac{x-y}{\eta}}\dd y\dd x \notag\\
&\quad +\int_{\R}\sgn(\partial_{x}\cW(t,x)) \big(\partial_{x}\cW(t,x)\big)^{2}\dd x\notag\\
&\quad +\int_{\R}\delta(\partial_{x}\cW(t,x))\cW(t,x)\partial_{x}\cW(t,x)\partial_{x}^{2}\cW(t,x)\dd x\dd t\notag\\
&\quad +\lim_{x\rightarrow-\infty}|\partial_{x}\cW(t,x)|\cW(t,x)\underbrace{-\lim_{x\rightarrow\infty}|\partial_{x}\cW(t,x)|\cW(t,x)}_{\leq 0}
\intertext{sorting terms and recalling that \(\cW\geq 0\)}
&\leq\tfrac{1}{\eta}\int_{\R}|\partial_{x}\cW(t,x)|\big(V'(q(t,x))q(t,x)+\eta\partial_{x}\cW(t,x)\big)\dd x\notag\\
&\quad -\tfrac{1}{\eta^{2}}\int_{\R}\sgn(\partial_{x}\cW(t,x))\int_{x}^{\infty}\partial_{y}\cW(t,y)\e^{\frac{x-y}{\eta}}\big(V'(q(t,y))q(t,y)+\eta \partial_{y}\cW(t,y)\big)\dd y\dd x\notag\\
&\quad +\lim_{x\rightarrow-\infty}|\partial_{x}\cW(t,x)|\cW(t,x)
\intertext{using \cref{lem:W_x_vanishing} and a change of order of integration}
&=\tfrac{1}{\eta}\int_{\R}|\partial_{x}\cW(t,x)|\big(V'(q(t,x))q(t,x)\!+\!\cW(t,x)\!-\!V(q(t,x))\big)\dd x\\
&\ +\tfrac{1}{\eta^{2}}\!\!\!\int_{\R}\!\!\partial_{y}\cW(t,y)\big(V'(q(t,y))q(t,y)+\cW(t,y)-V(q(t,y))\big)\!\!\!\int_{-\infty}^{y}\!\!\!\!\!\!\!\e^{\frac{x-y}{\eta}}\sgn(\partial_{x}\cW(t,x))\dd x\dd y\\
\intertext{and recognizing that \(V'(q(t,x))q(t,x)+\cW(t,x)-V(q(t,x))\leq 0\ \forall (t,x)\in\OT\) thanks to  \cref{eq:velocity_TV_bound} and  \(\cW(t,x)\leq V\big(\essinf_{y\in\R}q_{0}(y)\big)\) }
&\leq \tfrac{1}{\eta}\int_{\R}|\partial_{x}\cW(t,x)|\big(V'(q(t,x))q(t,x)+\cW(t,x)-V(q(t,x))\big)\dd x\\
&\quad -\tfrac{1}{\eta^{2}}\int_{\R}|\partial_{y}\cW(t,y)|\big(V'(q(t,y))q(t,y)+\cW(t,y)-V(q(t,y))\big)\int_{-\infty}^{y}\e^{\frac{x-y}{\eta}}\dd x\dd y=0.
\end{align*}
Recalling that the involved functions originally depended on \(\eps,\eta\), we have by the previous inequality
\begin{align*}
    |\cW_{\eta}[V(q_{\eta}^{\eps})](t,\cdot)|_{\sTV(\R)}&\leq |\cW_{\eta}[V(q_{\eta}^{\eps})](0,\cdot)|_{\sTV(\R)}\\
    &\leq |V(q_{0}^{\eps}(\cdot))|_{\sTV(\R)}\leq \|V'\|_{\sL^{\infty}((0,\|q_{0}{\eps}\|_{\sL^{\infty}(\R)}))}|q_{0}^{\eps}|_{\sTV(\R)}\\
    &\leq|V(q_{0}^{\eps}(\cdot))|_{\sTV(\R)}\leq \|V'\|_{\sL^{\infty}((0,\|q_{0}\|_{\sL^{\infty}(\R)}))}|q_{0}|_{\sTV(\R)}
\end{align*}
by the typical approximation results on \(q_{0}\) by \(q_{0}^{\eps}\).
However, this is indeed the postulated estimate, uniform in \(\eps\) and \(\eta\).
\end{proof}
\begin{remark}[The meaning of \cref{eq:velocity_TV_bound}]
Making \cref{eq:velocity_TV_bound} slightly more restrictive by postulating (\(V\) is monotonically decreasing)
\[
V'(x)x-V(x)+V(0)\leq 0
\]
we can use Taylor polynomials with remainder so that we can write
\[ V'(x)x-V(x)+V(0)=\int_0^x s V''(s)\dd s.\]
This expression makes it easier to classify functions which fulfill \cref{eq:velocity_TV_bound}. On the one hand convex \(V\) do not satisfy \cref{eq:velocity_TV_bound}, but on the other hand every concave velocity functions are well-suited. In particular, this includes the commonly used class of velocity functions introduced by Greenshields in \cite{greenshields}, i.e.,
\begin{equation}
V(x)=V_{\max}\Big(1-\big(\tfrac{x}{q_{\max}}\big)^k \Big),
\label{eq:greenshields}\end{equation}
for some $k\in \mathbb{N}$ and positive constants $V_{\max}\in\R_{>0},\ q_{\max}\in\R_{>0}$ which represent maximum velocity and density.
\end{remark}
\subsection{Total variation bounded velocity implies the entropy admissibility}\label{sec:entropy_convergence}
In this section, we show that the weak solution of the nonlocal dynamics converges to the entropy solution given that the nonlocal approximation of the velocity converges.
\begin{theorem}[Convergence to the local entropy solution]\label{theo:convergenceentropy}
For the problem given in \cref{defi:problem_setup} assume that it holds
\begin{align}
-\infty<\tfrac{V'(s)}{s}&<0\qquad  \forall s\in \Big[\inf_{x\in\R}q_{0}(x),\|q_{0}\|_{\sL^{\infty}(\R)}\Big]\label{eq:lower_bound_V_prime}\\
x\mapsto xV(x) &\text{ strictly convex/concave on \(\Big[\essinf_{y\in\R}q_{0}(y),\|q_{0}\|_{\sL^{\infty}(\R)}\)}\Big]\label{eq:velocity_strictly_convex}\\
\exists C\in\R_{\geq0}:\ & \sup_{\eta\in\R_{>0}}\big|\cW_{\eta}[V(q_{\eta})]\big|_{\sL^{\infty}((0,T);\sTV(\R))}\leq C.\label{eq:TV_uniform}
\end{align}
Then, the weak solution \(q_{\eta}\in\sC\big([0,T];\sL^{1}_{\,\loc}(\R)\big)\) of \cref{defi:problem_setup}  converges weakly star to the local entropy solution \(q^{*}\in \sC\big([0,T];\sL^{1}_{\,\loc}(\R)\big)\) as in \cref{defi:entropy_solution}, i.e.,
\[\forall K\overset{\text{c}}{\subset}\R\text{ compact } \forall g\in \sL^{1}\big((0,T);\sL^{1}(K)\big):
\lim_{\eta\rightarrow 0}\int_{0}^{T}\!\!\!\!\int_{K}\!\!\big(q_{\eta}(t,x)-q^{*}(t,x)\big)g(t,x)\dd x\dd t=0
\]
\end{theorem}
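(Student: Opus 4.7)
The plan is to extract a convergent subsequence from $\{q_\eta\}_{\eta>0}$ using the uniform bounds, verify that the limit is a weak solution of the local equation, establish the entropy inequality for one strictly convex $\alpha$, and finally invoke \cref{theo:one_strict_entropy} to identify the limit as the unique entropy solution, upgrading subsequential to full convergence.

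For the compactness step, \cref{theo:maximum_principle} provides a uniform $\sL^{\infty}$-bound on $q_\eta$, and rewriting the exponential-kernel identity \cref{eq:nonlocal_derivative} as $V(q_\eta)=\cW_\eta[V(q_\eta)]-\eta\partial_x\cW_\eta[V(q_\eta)]$ combined with \cref{eq:TV_uniform} and the trivial bound $\|\eta\partial_x\cW_\eta[V(q_\eta)]\|_{\sL^{\infty}(\OT)}\leq 2\|V\|_{\sL^{\infty}([0,\|q_0\|_{\sL^{\infty}(\R)}])}$ yields a uniform bound for $V(q_\eta)$ in $\sL^{\infty}((0,T);\sTV(\R))$. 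Since $V$ is strictly monotone by \cref{eq:lower_bound_V_prime} and continuous, $V^{-1}$ is continuous on the compact image. An Aubin-Lions-type argument, combining the spatial BV bound with time equicontinuity derived from the weak formulation of \cref{defi:weak_solution}, produces along a subsequence $V(q_\eta)\to V(q^{*})$ in $\sL^{1}_{\,\loc}(\OT)$; continuity of $V^{-1}$ and dominated convergence then give $q_\eta\to q^{*}$ in $\sL^{1}_{\,\loc}(\OT)$. Writing $\cW_\eta=V(q_\eta)+\eta\partial_x\cW_\eta$ and using that $\|\partial_x\cW_\eta\|_{\sL^{1}_{\,\loc}}$ is uniformly bounded by \cref{eq:TV_uniform}, we deduce $\cW_\eta\to V(q^{*})$ in $\sL^{1}_{\,\loc}(\OT)$, so passing to the limit in \cref{defi:weak_solution} shows that $q^{*}$ is a weak solution of \cref{defi:problem_setup_local}.

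For the entropy inequality, I fix any strictly convex $\alpha\in \sC^{2}(\R)$ with associated $\beta$ from \cref{defi:entropy_solution}. Working on the smooth approximations $q_\eta^{\eps}$ supplied by \cref{lem:stability} and using the chain rule together with the identities $\eta\partial_x\cW_\eta=\cW_\eta-V(q_\eta)$ and its spatial derivative $\eta\partial_x^{2}\cW_\eta=\partial_x\cW_\eta-V'(q_\eta)\partial_x q_\eta$, a direct computation yields the pointwise identity
\begin{equation*}
\partial_t\alpha(q_\eta)+\partial_x\beta(q_\eta)=-\eta\,\alpha'(q_\eta)\,\partial_x\bigl(q_\eta\,\partial_x\cW_\eta[V(q_\eta)]\bigr).
\end{equation*}
Testing against $\phi\in \sC^{1}_{\text{c}}((-42,T)\times\R;\R_{\geq 0})$, integrating over $\OT$, and integrating by parts once leads to the decomposition
\begin{equation*}
\mEF[\phi,\alpha,q_\eta]=-\eta\iint_{\OT}\tfrac{q_\eta\,\alpha''(q_\eta)}{V'(q_\eta)}\bigl(\partial_x\cW_\eta[V(q_\eta)]\bigr)^{2}\phi\dd x\dd t + R_\eta,
\end{equation*}
in which the leading integrand is non-positive (as $q_\eta\geq 0$, $\alpha''>0$, $V'<0$, $\phi\geq 0$), so the $-\eta$ prefactor produces a non-negative contribution, and $R_\eta$ collects an $O(\eta)$ boundary term of the form $\eta\iint q_\eta\partial_x\cW_\eta\,\alpha'(q_\eta)\partial_x\phi$ together with a residual involving $\partial_x^{2}\cW_\eta$. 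The latter is handled by substituting $\eta\partial_x^{2}\cW_\eta=\partial_x\cW_\eta-\partial_x V(q_\eta)$ and estimating with the $\sL^{\infty}$-bound on $\eta\partial_x\cW_\eta$ together with the uniform $\sTV$-bounds on $\cW_\eta$ and on $V(q_\eta)$, giving $|R_\eta|\leq C\eta$. Hence $\liminf_{\eta\to 0}\mEF[\phi,\alpha,q_\eta]\geq 0$, and the strong $\sL^{1}_{\,\loc}$-convergence of the previous step implies $\mEF[\phi,\alpha,q^{*}]\geq 0$.

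Assumption \cref{eq:velocity_strictly_convex} makes the flux $f(x)=xV(x)$ strictly convex or strictly concave on $[\essinf_{x\in\R}q_0(x),\|q_0\|_{\sL^{\infty}(\R)}]$, so \cref{theo:one_strict_entropy} (and its strict-convex analogue) identifies $q^{*}$ as the unique entropy solution; uniqueness of the limit then upgrades the subsequential convergence to convergence of the full family, which implies the claimed weak-$*$ convergence against $\sL^{1}((0,T);\sL^{1}(K))$-test functions. I expect the main obstacle to be the entropy calculation: since we have no second-order bound on $\cW_\eta$, controlling the $\partial_x^{2}\cW_\eta$ contribution in $R_\eta$ relies entirely on the algebraic identity $\eta\partial_x^{2}\cW_\eta=\partial_x\cW_\eta-V'(q_\eta)\partial_x q_\eta$, which rewrites it as the $\sL^{\infty}$-bounded quantity $\partial_x\cW_\eta-\partial_x V(q_\eta)$, allowing each residual contribution to scale as $O(\eta)$ against the uniform total variation estimates from \cref{prop:uniform_TV_estimate}.
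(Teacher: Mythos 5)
The overall strategy (compactness, pass the entropy inequality, invoke \cref{theo:one_strict_entropy}, upgrade subsequential to full convergence) is the same as the paper's, but there are two concrete gaps in the entropy step that the paper's proof is specifically designed to avoid.

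First, a minor point: you cannot deduce a uniform $\sL^{\infty}((0,T);\sTV(\R))$ bound on $V(q_\eta)$ from \cref{eq:TV_uniform} together with $\|\eta\partial_x\cW_\eta\|_{\sL^{\infty}}\leq 2\|V\|_{\sL^\infty}$: writing $V(q_\eta)=\cW_\eta-\eta\partial_x\cW_\eta$ gives $|V(q_\eta)|_{\sTV}\leq|\cW_\eta|_{\sTV}+|\eta\partial_x\cW_\eta|_{\sTV}$, and the second term is $|\cW_\eta-V(q_\eta)|_{\sTV}$, which is circular; an $\sL^\infty$-bound on $\eta\partial_x\cW_\eta$ does not control its total variation. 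What does work, and is what the paper (implicitly) uses, is to get compactness directly for $\cW_\eta$ (TV + $\sL^\infty$ + time equicontinuity) and then transfer it to $V(q_\eta)$ via $\|V(q_\eta)-\cW_\eta\|_{\sC([0,T];\sL^1(\R))}=\eta\,|\cW_\eta|_{\sL^\infty((0,T);\sTV(\R))}=O(\eta)$.

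The substantial gap is the entropy calculation for an \emph{arbitrary} strictly convex $\alpha$. Your decomposition
\begin{equation*}
\mEF[\phi,\alpha,q_\eta]=-\eta\iint_{\OT}\tfrac{q_\eta\alpha''(q_\eta)}{V'(q_\eta)}\bigl(\partial_x\cW_\eta\bigr)^2\phi\dd x\dd t+R_\eta
\end{equation*}
is algebraically correct with $R_\eta=-\eta\iint\alpha'(q_\eta)q_\eta\partial_x\cW_\eta\partial_x\phi+\eta^2\iint\tfrac{q_\eta\alpha''(q_\eta)}{V'(q_\eta)}\partial_x^2\cW_\eta\,\partial_x\cW_\eta\,\phi$, but the second piece of $R_\eta$ is \emph{not} $O(\eta)$: substituting $\eta\partial_x^2\cW_\eta=\partial_x\cW_\eta-\partial_x V(q_\eta)$, as you propose, produces $\eta\iint\tfrac{q_\eta\alpha''}{V'}(\partial_x\cW_\eta)^2\phi-\eta\iint\tfrac{q_\eta\alpha''}{V'}\partial_x V(q_\eta)\partial_x\cW_\eta\phi$; the first of these \emph{exactly cancels} your claimed non-negative leading term, and the second is just $-\eta\iint q_\eta\alpha''(q_\eta)\partial_x q_\eta\,\partial_x\cW_\eta\,\phi$, i.e.\ the original term you started from -- the manipulation is circular. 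Any alternative estimate of $\eta^2\iint\tfrac{q_\eta\alpha''}{V'}\partial_x^2\cW_\eta\partial_x\cW_\eta\phi$ via integration by parts generates a $\partial_x q_\eta$ factor that is not uniformly controlled (no uniform $\sTV$ bound on $q_\eta$ itself is available under these hypotheses), and $\eta(\partial_x\cW_\eta)^2$ is only $O(1)$ in $\sL^1$, not $O(\eta)$. The paper's proof avoids all of this by choosing the \emph{specific} entropy with $\alpha''(x)=-V'(x)/x$, so that $\alpha''(q_\eta)q_\eta\partial_x q_\eta=-\partial_x V(q_\eta)=-\partial_x\cW_\eta+\eta\partial_x^2\cW_\eta$; no division by $V'(q_\eta)$ and no residual $\partial_x q_\eta$ appears, and after one more integration by parts the decomposition reads $\eta\iint(\partial_x\cW_\eta)^2\phi\geq 0$ plus terms that are all $O(\eta)$ by the $\sL^\infty$, $\sTV$, and $\|V(q_\eta)-\cW_\eta\|_{\sC([0,T];\sL^1)}=O(\eta)$ bounds. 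This is also why the assumption \cref{eq:lower_bound_V_prime}, bounding $V'(s)/s$ from below and above, is stated: it is precisely what makes this particular $\alpha$ a legitimate strictly convex $\sC^2$ entropy on the relevant range. Your proof should replace ``any strictly convex $\alpha$'' by this specific choice and redo the manipulation accordingly; as written it does not close.
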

\begin{proof}
Recalling \cref{defi:entropy_solution} and particularly \cref{ineq:Entropy_inequality} we need to show that
\[
\lim_{\eta\rightarrow 0}\mEF[\phi,\alpha,q_{\eta}]\geq 0 \ \forall \phi\in C^{1}_{\text{c}}(\OT)
\]
according to \cref{theo:one_strict_entropy} and the assumption in \cref{eq:velocity_strictly_convex} -- for one strictly convex entropy \(\alpha\).
To this end, we again assume that \(q_{\eta}^{\eps}\) for \(\eps\in\R_{>0}\) is smooth by taking advantage of \cref{lem:stability} and having indeed only smoothed the initial datum \(q_{0}\) by \(q_{0}^{\eps}\).
Then, we can go into \cref{ineq:Entropy_inequality} and manipulate further to arrive for \(\alpha\in \sW^{2,\infty}_{\loc}(\R),\phi\in\sC_{\text{c}}^{1}((-42,T)\times\R;\R_{\geq0})\) arbitrary but given at (suppressing the dependency on \(\eps\in\R_{>0}\))
\begin{align*}
    &\mEF[\phi,\alpha,q_{\eta}]\\
    &=-\iint_{\OT}\Big(\alpha'(q_{\eta}(t,x))\partial_{t}q_{\eta}(t,x)+\beta'(q_{\eta}(t,x))\partial_{x}q_{\eta}(t,x)\Big)\phi(t,x)\dd x\dd t\\
    &=-\iint_{\OT}\alpha'(q_{\eta}(t,x))\Big(\partial_{t}q_{\eta}(t,x)+\partial_{x}\big(V(q_{\eta}(t,x))q_{\eta}(t,x)\big)\Big)\phi(t,x)\dd x\dd t
    &\intertext{inserting the strong form of the nonlocal equation, i.e., the equation in \cref{defi:problem_setup}}
&=-\iint_{\OT}\alpha'(q_{\eta}(t,x))\partial_{x}q_{\eta}(t,x)\big(V(q_{\eta}(t,x))-\cW_{\eta}[V(q_{\eta})](t,x)\big)\phi(t,x)\dd x\dd t\\
&\quad -\iint_{\OT}\alpha'(q_{\eta}(t,x))q_{\eta}(t,x)\partial_{x}\Big(V(q_{\eta}(t,x))-\cW_{\eta}[V(q_{\eta})](t,x)\Big)\phi(t,x)\dd x\dd t\notag
\intertext{and with another integration by parts in the second term}
&=\iint_{\OT}\alpha''(q_{\eta}(t,x))q_{\eta}(t,x)\partial_{x}q_{\eta}(t,x)\Big(V(q_{\eta}(t,x))-\cW_{\eta}[V(q_{\eta})](t,x)\Big)\phi(t,x)\dd x\dd t\\
&\quad +\iint_{\OT}\alpha'(q_{\eta}(t,x))q_{\eta}(t,x)\big(V(q_{\eta}(t,x))-\cW_{\eta}[V(q_{\eta})](t,x)\big)\partial_x \phi(t,x)\dd x\dd t.\notag\\
\intertext{Setting as convex entropy \(\alpha''(x)=-\tfrac{V'(x)}{x}>0\ \forall x\in \big(\inf_{s\in\R}q_{0}(s),\|q_{0}\|_{\sL^{\infty}(\R)}\big)\) which is guaranteed by \cref{eq:lower_bound_V_prime} }
&=-\iint_{\OT} V'(q_{\eta}(t,x))V(q_{\eta}(t,x))\partial_{x}q_{\eta}(t,x)\phi(t,x)\dd x\dd t \\
&\quad +\iint_{\OT}V'(q_{\eta}(t,x))\partial_{x}q_{\eta}(t,x)\cW_{\eta}[V(q_{\eta})](t,x)\phi(t,x)\dd x\dd t\\
&\quad +\iint_{\OT}\alpha'(q_{\eta}(t,x))q_{\eta}(t,x)\big(V(q_{\eta}(t,x))-\cW_{\eta}[V(q_{\eta})](t,x)\big)\partial_x \phi(t,x)\dd x\dd t\notag
\intertext{Recalling \(V'(q_{\eta}(t,x))\partial_{x}q_{\eta}(t,x)=\partial_{x}\cW_{\eta}[V(q_{\eta})](t,x)-\eta\partial_{x}^{2}\cW_{\eta}[V(q_{\eta})](t,x)\ \forall (t,x)\in\OT\) which follows from \cref{eq:relation_V_W} when differentiating two times in space we continue}
&=-\tfrac{1}{2}\iint_{\OT}\partial_{x}\big(V(q_{\eta}(t,x))^{2}\big)\phi(t,x)\dd x\dd t\\
&\quad+\iint_{\OT}\partial_{x}\cW_{\eta}[V(q_{\eta})](t,x)\cW_{\eta}[V(q_{\eta})](t,x)\phi(t,x)\dd x\dd t\\
&\quad -\eta\iint_{\OT}\partial_{x}^{2}\cW_{\eta}[V(q_{\eta})](t,x)\cW_{\eta}[V(q_{\eta})](t,x)\phi(t,x)\dd x\dd t\\
&\quad +\iint_{\OT}\alpha'(q_{\eta}(t,x))q_{\eta}(t,x)\big(V(q_{\eta}(t,x))-\cW_{\eta}[V(q_{\eta})](t,x)\big)\partial_{x}\phi(t,x)\dd x\dd t\notag\\
\intertext{and another integration by parts on the middle term yields}
&=\tfrac{1}{2}\iint_{\OT}\big(V(q_{\eta}(t,x))^{2}\big)\partial_{x}\phi(t,x)\dd x\dd t +\tfrac{1}{2}\iint_{\OT}\partial_{x}\big(\cW_{\eta}[V(q_{\eta})](t,x)\big)^{2}\phi(t,x)\dd x\dd t\\
&\quad+\eta\iint_{\OT}\partial_{x}\cW_{\eta}[V(q_{\eta})](t,x)\cW_{\eta}[V(q_{\eta})](t,x)\partial_{x}\phi(t,x)\dd x\dd t\\
&\quad+\underbrace{\eta\iint_{\OT}\big(\partial_{x}\cW_{\eta}[V(q_{\eta})](t,x)\big)^{2}\phi(t,x)\dd x\dd t}_{\geq 0}\\
&\quad +\iint_{\OT}\alpha'(q_{\eta}(t,x))q_{\eta}(t,x)\big(V(q_{\eta}(t,x))-\cW_{\eta}[V(q_{\eta})](t,x)\big)\partial_x \phi(t,x)\dd x\dd t\notag
\intertext{and with \(I\:(\essinf_{x\in\R}q_{0}(x),\|q_{0}\|_{\sL^{\infty}(\R)})\)}
&\geq\tfrac{1}{2}\iint_{\OT}\big(V(q_{\eta}(t,x))\big)^{2}\partial_{x}\phi(t,x)\dd x\dd t -\tfrac{1}{2}\iint_{\OT}\big(\cW_{\eta}[V(q_{\eta})](t,x)\big)^{2}\partial_{x}\phi(t,x)\dd x\dd t\\
&\quad-\eta T\sup_{t\in[0,T]}|\cW_{\eta}[V(q_{\eta})](t,\cdot)| _{\sTV(\R)}\|\cW_{\eta}[V(q_{\eta})]\|_{\sL^{\infty}((0,T);\sL^{\infty}(\R))}\|\partial_{2}\phi\|_{\sL^{\infty}(\OT)}\\
&\quad -\|\alpha'\|_{\sL^{\infty}(I)}\|q_{0}\|_{\sL^{\infty}(\R)}T\|V(q_{\eta})-\cW_{\eta}[V(q_{\eta})]\|_{\sC([0,T];\sL^{1}(\R))}\|\partial_{2}\phi\|_{\sL^{\infty}(\OT)}\\
&\geq -\|V\|_{\sL^{\infty}(I)}\|\partial_{2}\phi\|_{\sL^{\infty}(\OT)} T\|V(q_{\eta})-\cW_{\eta}[V(q_{\eta})]\|_{\sC([0,T];\sL^{1}(\R))}\\
&\quad-\eta T\sup_{t\in[0,T]}|\cW_{\eta}[V(q_{\eta})](t,\cdot)| _{\sTV(\R)}\|\cW_{\eta}[V(q_{\eta})]\|_{\sL^{\infty}((0,T);\sL^{\infty}(\R))}\|\partial_{2}\phi\|_{\sL^{\infty}(\OT)}\\
&\quad -\|\alpha'\|_{\sL^{\infty}(I)}\|q_{0}\|_{\sL^{\infty}(\R)}T\|V(q_{\eta})-\cW_{\eta}[V(q_{\eta})]\|_{\sC([0,T];\sL^{1}(\R))}\|\partial_{2}\phi\|_{\sL^{\infty}(\OT)}\overset{\eta\rightarrow 0}=0
\end{align*}
when recalling that by \cref{eq:TV_uniform} together with \cref{eq:relation_V_W}, we have
\begin{align*}
\partial_{x}\cW_{\eta}[V(q_{\eta})]&\equiv\tfrac{1}{\eta}\big(\cW_{\eta}[V(q_{\eta})]-V(q_{\eta})\big)\Longrightarrow \eta\partial_{x}\cW_{\eta}[V(q_{\eta})]\equiv \cW_{\eta}[V(q_{\eta})]-V(q_{\eta})\\
&\Longrightarrow \eta|\partial_{x}\cW_{\eta}[V(q_{\eta})]|_{\sL^{\infty}((0,T);\sTV(\R))}=\|\cW_{\eta}[V(q_{\eta})]-V(q_{\eta})\|_{\sC([0,T];\sL^{1}(\R))}.
\end{align*}
Thus, we have that the right hand side of the previous term converges to zero which means that -- applied to the previous estimate the first and second term clearly converge to zero if \(\eta\rightarrow 0\) while the second term converges to zero by the uniform \(\sTV\) bound of \(\cW_{\eta}[V(q_{\eta})]\) and the factor \(\eta\) in front of it.
Recalling that we have the dependency on the smoothing parameter \(\eps\in\R_{>0}\) it is enough to notice that \(\mEF[\phi,\alpha,q]\) is continuous in \(\sC\big([0,T];\sL^{1}_{\loc}(\R)\big)\) with regard to \(q\) so that we can let \(\eps\rightarrow 0\) in the previous estimate using \cref{lem:stability} to obtain the convergence for the non-smoothed version.

As \(q_{\eta}\) is essentially bounded as stated in \cref{theo:maximum_principle} as maximum principle, we know by the weak star compactness that there exists
\[
q^{*}\!\in\sL^{\infty}((0,T);\sL^{\infty}(\R)),\ \{\eta_{k}\}_{k\in\N}\subset \R_{>0}: \lim_{k\rightarrow\infty} \eta_{k}=0: q_{\eta_{k}}\!\!\weakstar q^{*} \text{ in } \sL^{\infty}((0,T);\sL^{\infty}(\R)).
\]
The convergence up to now was only on subsequences, however, as every subsequence converges to the local entropy solution and as this entropy solution is unique as guaranteed in \cref{theo:local_conservation_existence_uniqueness} and \cref{theo:one_strict_entropy}, we obtain the convergence for all sequences.
\end{proof}

\begin{remark}[Strong convergence of the solution and more]\label{rem:strong_convergence}
Interestingly, in the previous proof the typical compactness estimates \cite[Theorem 4.1]{coclite2020general}, \cite[Proposition 4.1, Lemma 4.2]{keimer43} are not necessary to obtain the convergence against the local entropy solution. This can be explained by the fact that we obtain instead the compactness in the velocity \(\cW\) -- the only nonlinear part in the considered conservation law -- although this compactness is not explicitly used. On the other hand, one only obtains the weak star convergence against the entropy solution. The convergence in \(\sC([0,T];\sL^{1}_{\loc}(\R))\) indeed requires explicit compactness arguments, which carry over from the compactness of the nonlocal velocity whenever we assume that \(V\) is a diffeomorphism. Thus, assuming for instance that \(V'(s)<0\ \forall s\in \Big[\essinf_{y\in\R}q_{0}(y),\|q_{0}\|_{\sL^{\infty}(\R)}\Big]\), we obtain analogously to \cite[Theorem 4.1]{coclite2020general} the strong convergence. We do not detail it further, but note it in our general convergence result in the next section.
\end{remark}
\subsection{The singular limit problem: Convergence of solutions to nonlocal conservation laws to the local entropy solution}
In this section, we will present the convergence result which is mainly a collection of the previous results obtained in \cref{sec:uniform_TV} and \cref{sec:entropy_convergence}. However, as we do not deal with a classical compactness estimate in \(\sBV\) but instead in \(\sTV\cap \sL^{\infty}\) it is worth detailing the convergence implied by the uniform \(\sTV\) bound in \cref{sec:uniform_TV}:
\begin{theorem}[Convergence to the entropy solution]\label{theo:main}
Let \cref{ass:input_datum} hold and in addition
\begin{align}
-\infty<\tfrac{V'(s)}{s}&<0 && \forall s\in \Big[\essinf_{x\in\R}q_{0}(x),\|q_{0}\|_{\sL^{\infty}(\R)}\Big]   \label{eq:V_prime} \\
s\mapsto sV(s) \text{ strictly} &\text{ convex/concave } &&\forall s\in\Big[\essinf_{y\in\R}q_{0}(y),\|q_{0}\|_{\sL^{\infty}(\R)}\Big] \label{eq:V_prime_1}\\
V'(s)s+V\Big(\essinf_{y\in\R}q_{0}(y)\Big)&\leq V(s)&& \forall s\in \Big[\essinf_{x\in\R} q_{0}(x),\|q_{0}\|_{\sL^{\infty}(\R)}\Big].\label{eq:V_prime_2}
\end{align}
Then, the solution \(q_{\eta}\in\sC\big([0,T];\sL^{1}_{\loc}(\R)\big)\) of the nonlocal conservation law \cref{defi:problem_setup} converges to the local entropy solution \(q^{*}\in\sC\big([0,T];\sL^{1}_{\loc}(\R)\big)\) of \cref{defi:problem_setup_local} for \(\eta\rightarrow 0\) and it holds
\begin{align*}
\lim_{\eta\rightarrow 0} \|\cW_{\eta}[V(q_{\eta})]-V(q^{*})\|_{\sC\left([0,T];\sL^{1}_{\loc}(\R)\right)}&=0\ \wedge\ q_{\eta}\weakstar q^{*}\in \sL^{\infty}((0,T);\sL^{\infty}(\R)).
\end{align*}
If \(V'(s)<0\ \forall s\in\big[\essinf_{x\in\R} q_{0}(x),\|q_{0}\|_{\sL^{\infty}(\R)}\big]\), we obtain strong convergence in \(\sC\big([0,T];\sL^{1}_{\loc}(\R)\big)\), i.e.
\[
\lim_{\eta\rightarrow 0}\|q_{\eta}-q^{*}\|_{\sC([0,T];\sL^{1}_{\loc}(\R))}=0.
\]
\end{theorem}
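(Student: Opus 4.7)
The plan is to chain together the three preparatory results already established: the uniform total variation estimate of \cref{prop:uniform_TV_estimate}, the entropy admissibility of the limit in \cref{theo:convergenceentropy}, and the compactness observation in \cref{rem:strong_convergence}. The hypotheses \cref{eq:V_prime}--\cref{eq:V_prime_2} have been tailored precisely to feed these three ingredients: \cref{eq:V_prime_2} is \cref{eq:velocity_TV_bound}, \cref{eq:V_prime} is \cref{eq:lower_bound_V_prime}, and \cref{eq:V_prime_1} is \cref{eq:velocity_strictly_convex}.

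First, I would apply \cref{prop:uniform_TV_estimate} with the growth assumption \cref{eq:V_prime_2} to obtain a constant \(C\in\R_{\geq 0}\), depending only on \(V\) and \(q_0\), with
\[
\sup_{\eta\in\R_{>0}}\bigl|\cW_{\eta}[V(q_\eta)]\bigr|_{\sL^\infty((0,T);\sTV(\R))}\leq C,
\]
which is precisely the uniform bound \cref{eq:TV_uniform} required by \cref{theo:convergenceentropy}. Second, combining this bound with \cref{eq:V_prime}--\cref{eq:V_prime_1}, \cref{theo:convergenceentropy} applies and yields a (unique) local entropy solution \(q^*\in\sC([0,T];\sL^1_{\loc}(\R))\) such that \(q_\eta\weakstar q^*\) in \(\sL^\infty((0,T);\sL^\infty(\R))\); uniqueness of the entropy solution upgrades subsequential convergence to full convergence.

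Third, I would establish the strong convergence \(\cW_{\eta}[V(q_\eta)]\to V(q^*)\) in \(\sC([0,T];\sL^1_{\loc}(\R))\). The uniform bounds from \cref{theo:maximum_principle} give \(\|\cW_\eta[V(q_\eta)]\|_{\sL^\infty}\leq \|V\|_{\sL^\infty(I)}\), and the spatial total variation is uniformly bounded by the first step; equicontinuity in time follows from the dynamics in \cref{lem:dynamics_W} (or directly from the conservation law, since the flux is uniformly bounded). A Helly/Kolmogorov-type compactness argument then extracts a subsequence converging in \(\sC([0,T];\sL^1_{\loc}(\R))\) to some limit \(W^*\). The key algebraic identity \cref{eq:nonlocal_derivative} gives
\[
\|\cW_\eta[V(q_\eta)]-V(q_\eta)\|_{\sC([0,T];\sL^1(\R))}=\eta\,\bigl|\partial_x\cW_\eta[V(q_\eta)]\bigr|_{\sL^\infty((0,T);\sTV(\R))}\leq \eta C\to 0,
\]
so \(V(q_\eta)\) inherits the strong convergence and must converge to \(W^*\) as well. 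Passing to the limit in the weak formulation of \cref{defi:weak_solution} against \(q_\eta\weakstar q^*\) then identifies \(W^*=V(q^*)\); uniqueness of the local entropy solution again removes the subsequence.

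Finally, under the additional sign assumption \(V'(s)<0\) on \(\bigl[\essinf q_0,\|q_0\|_{\sL^\infty(\R)}\bigr]\), the map \(V\) is a diffeomorphism onto its image with Lipschitz inverse on that bounded range; the strong \(\sC([0,T];\sL^1_{\loc}(\R))\)-convergence of \(V(q_\eta)\) to \(V(q^*)\) then transfers, via postcomposition with \(V^{-1}\), to strong convergence of \(q_\eta\) to \(q^*\). The main obstacle is really the third step: the spatial \(\sTV\) bound alone is not enough for compactness in \(\sC([0,T];\sL^1_{\loc}(\R))\), so some care is needed to produce the required time equicontinuity from the equation for \(\cW_\eta\) in \cref{lem:dynamics_W}; the remaining steps are essentially assembling previously established facts.
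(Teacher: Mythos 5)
Your proposal is correct and follows essentially the same approach as the paper's own (very terse) proof, which simply cites the combination of \cref{prop:uniform_TV_estimate}, \cref{theo:convergenceentropy}, and \cref{rem:strong_convergence}. You correctly match \cref{eq:V_prime_2} to \cref{eq:velocity_TV_bound}, \cref{eq:V_prime} to \cref{eq:lower_bound_V_prime}, and \cref{eq:V_prime_1} to \cref{eq:velocity_strictly_convex}, and your honest flag about needing time equicontinuity for the \(\sC([0,T];\sL^1_{\loc})\) convergence of \(\cW_\eta\) corresponds exactly to the part the paper also only sketches via reference to the Coclite et al.\ argument; one small notational remark is that the chain \(\|\cW_\eta-V(q_\eta)\|_{\sC([0,T];\sL^1(\R))}=\eta\,|\partial_x\cW_\eta|_{\sL^\infty((0,T);\sTV(\R))}\leq\eta C\) you reproduce (as does the paper) should read \(|\cW_\eta|_{\sTV}\) rather than \(|\partial_x\cW_\eta|_{\sTV}\), since the identity \(\eta\partial_x\cW_\eta=\cW_\eta-V(q_\eta)\) gives the \(\sL^1\)-norm of the right side equal to \(\eta\) times the \(\sTV\)-seminorm of \(\cW_\eta\), which is what \cref{prop:uniform_TV_estimate} actually bounds.
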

\begin{proof}
The proof is a combination of the uniform \(\sTV\) bound in \cref{prop:uniform_TV_estimate}, the entropy admissibility in \cref{theo:convergenceentropy} and \cref{rem:strong_convergence}.
\end{proof}
\begin{remark}[Meaning of the requirements]
Let us shortly discuss the assumptions with respect to their applicability. The condition \cref{eq:V_prime} does not allow zero initial density as long as \(V'(s)\in {\scriptstyle \mathcal{O}}(s)\) for \(s\rightarrow 0\) which is rather restrictive. The upper bound, however, not as much as it just excludes cases where \(V\) gets locally constant or its derivative is zero at \(0\).

\cref{eq:V_prime_1} is the classical assumption for (local) conservation laws and is satisfied by a variety of velocity functions, in particular also by the named \cref{eq:greenshields} velocity. Finally, \cref{eq:V_prime_1} is satisfied by \cref{eq:greenshields} as well.

Concluding, all assumptions seem to be rather natural and not very restrictive in the context of conservation laws except for the zero density case.

This case, however, is an immediate consequence of requiring a strictly convex entropy in the proof of \cref{theo:convergenceentropy}, namely that \(x\mapsto -\frac{V'(x)}{x}\) is strictly positive. As can be seen, for \(x\rightarrow 0\) this term diverges to \(\infty\) and is thus not covered by \cref{theo:one_strict_entropy} that one entropy is enough for specific velocities to obtain the uniqueness of solutions to local conservation laws. It might be possible that this result can be extended to entropies which are unbounded at the boundary, however, for now, \cref{theo:one_strict_entropy} remains the restricting point.

An interesting case is the choice of a linear velocity, say
\(V(s)=1-s,\ s\in \R\) and \(q_0 \geqq \eps > 0\). Then, clearly, \cref{eq:V_prime} is satisfied and so is \cref{eq:V_prime_1} and \cref{eq:V_prime_2}.
This is -- except for the restriction on the initial datum --  in line with \cite{coclite2020general} where  a general convergence result for \textbf{nonlocal in the solution} is established as in the case of a linear velocity \textbf{nonlocal in velocity} and \textbf{nonlocal in the solution} coincide (compare also \cref{eq:local_nonlocal}).

Differently put, our analysis was fine enough to cover the convergence result in the coinciding case with the nonlocal in the solution conservation law and the singular limit problem when restricting the initial so that it is uniformly positive.

Another interesting case is the choice of a quadratic velocity, say \(V(s)=1-s^{2},\ s\in\R\). Then, \cref{eq:V_prime} is satisfied without restricting the initial datum, \cref{eq:V_prime_1} is also satisfied and \cref{eq:V_prime_2} results in
\[
-2s^{2}+1\leq 1-s^{2}\Longleftrightarrow -s^{2}\leq 0,\  s\in[0,1]
\]
and which is true for all \(s\in[0,1]\). Altogether, the convergence to the local entropy solution holds for any initial datum in the case of the named quadratic velocity function.
\end{remark}

\section{Convergence results for arbitrary kernels and monotone initial datum}\label{sec:convergence_monotone}
As the obtained convergence in \cref{theo:main} does not cover all cases due to the conditions \crefrange{eq:V_prime}{eq:V_prime_2} and even more we have obtained the convergence only for the exponential kernel (although it might be possible to broaden it, compare \cite{keimer42}), we look in this chapter into whether conservation laws with nonlocal velocity are monotonicity preserving which would give us for these monotone initial data the convergence for general kernels.
And indeed, similar to \cite[Thm.\ 4.18]{pflug4} we get the following result
\begin{theorem}[Monotonicity of the nonlocal solution]\label{theo:monotonicity}
Assume that \cref{ass:input_datum} holds, that \(q_0\in\sL^{\infty}(\R;\R_{\geq0})\cap \sTV(\R)\) is monotone
and that \(V\in\sC^{3}(\R)\).
Then, the solution $q_{\eta}$ of the conservation law with nonlocal velocity as in \cref{defi:problem_setup} is monotonicity preserving on the entire time horizon.
\end{theorem}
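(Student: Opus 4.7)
The plan is a smoothing argument combined with a maximum principle on the spatial derivative, in the spirit of \cite[Thm.~4.18]{pflug4}. Assume \emph{without loss of generality} that \(q_0\) is monotonically non-increasing (the non-decreasing case follows by a sign flip). Approximating \(q_0\) by \(q_0^\eps := \phi_\eps \ast q_0\in\sC^\infty(\R)\) preserves the monotonicity (because \(\phi_\eps\geqq 0\)), and by \cref{lem:stability} together with \(V\in\sC^3\) the corresponding solutions \(q_\eta^\eps\) lie in \(\sC^2(\OT)\) with \(q_\eta^\eps \to q_\eta\) in \(\sC([0,T];\sL^1_{\loc}(\R))\); monotonicity passes to \(\sL^1_{\loc}\) limits, so it suffices to establish the monotonicity of \(q_\eta^\eps(t, \cdot)\) for each \(\eps\) and each \(t\in[0,T]\).

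Setting \(p := \partial_x q_\eta^\eps\), so \(p(0, \cdot) \leqq 0\), and differentiating the classical form of the equation once in \(x\) yields the transport-type relation
\[
\partial_t p + \cW\, \partial_x p + 2 p\, \partial_x \cW + q_\eta^\eps\, \partial_x^2 \cW = 0.
\]
A substitution \(u = y - x\) in the definition \cref{eq:nonlocal} and one integration by parts furnish the key identity
\[
\partial_x^2 \cW(t, x) = -\tfrac{\gamma(0)}{\eta} V'(q_\eta^\eps(t,x))\, p(t, x) - \tfrac{1}{\eta^2} \int_0^\infty \gamma'(u/\eta)\, V'(q_\eta^\eps(t, x+u))\, p(t, x+u)\dd u,
\]
which expresses \(\partial_x^2 \cW\) directly in terms of \(p\) and the monotonicity data \(\gamma' \leqq 0\), \(V'\leqq 0\).

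The core argument is a maximum principle for \(p\). Suppose towards a contradiction that \(\sup_{x}p(t, x) > 0\) for some \(t > 0\) and let \(t^*\) be the first time at which \(\sup_x p(t^*, x) = 0\) is attained at some \(x^* \in \R\) (a maximizer exists because \(p(t, \cdot)\) decays at \(\pm\infty\), which is inherited from \(q_0 \in \sTV(\R)\) along bounded characteristics). Then \(p \leqq 0\) on \([0, t^*] \times \R\), and \(p(t^*, x^*) = 0\), \(\partial_x p(t^*, x^*) = 0\). The boundary term in the identity for \(\partial_x^2 \cW\) vanishes at \((t^*, x^*)\), while the integrand \(\gamma'(u/\eta)\, V'(q_\eta^\eps)\, p\) is a product of three non-positive factors and hence non-positive, yielding \(\partial_x^2 \cW(t^*, x^*) \geqq 0\). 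Combined with \(q_\eta^\eps \geqq 0\) from \cref{theo:maximum_principle}, the PDE for \(p\) gives \(\partial_t p(t^*, x^*) = -q_\eta^\eps\, \partial_x^2 \cW \leqq 0\), so \(p\) cannot rise through zero at \((t^*, x^*)\).

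The main obstacle will be to upgrade this to a \emph{strict} contradiction, since the calculation above only yields \(\partial_t p(t^*, x^*) \leqq 0\) rather than \(<0\), which alone does not exclude a tangent-type escape. The plan is to apply the same analysis to the perturbed quantity \(\tilde p := p - \delta \ee^{L t}\), for \(\delta > 0\) and \(L > 0\) chosen large enough (depending on the uniform bounds for \(\cW\), \(\partial_x \cW\), \(V'\), \(V''\) coming from \cref{theo:maximum_principle} and \cref{ass:input_datum}) to absorb the extra lower-order terms produced in the PDE satisfied by \(\tilde p\). The perturbed inequality then becomes strict, yielding \(\tilde p < 0\) for every \(\delta > 0\); sending \(\delta \to 0\) yields \(p \leqq 0\) on \(\OT\). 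Finally, passing \(\eps \to 0\) via \cref{lem:stability} transfers the monotonicity from the sequence \(q_\eta^\eps\) to \(q_\eta\) on the full time horizon, proving the claim.
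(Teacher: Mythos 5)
Your proposal follows the same route as the paper: smooth the data via \cref{lem:stability}, differentiate the equation once in $x$, evaluate at the critical point of $\partial_x q_\eta^\eps$ where the first two terms drop, and exploit the sign structure $\gamma'\leqq 0$, $V'\leqq 0$, $\partial_x q_\eta^\eps$ of definite sign in the explicit formula for $\partial_x^2\cW$. The one genuine difference is that you notice and repair a rigor gap: the paper stops at ``$\partial_t(\partial_x q_\eta^\eps)|_{\tilde x}\geq 0$ at the first touch-down, hence monotonicity is preserved,'' which by itself does not rule out a tangential escape, and you propose the standard barrier $\tilde p = p - \delta\ee^{Lt}$ to turn the inequality strict. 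That fix does close the gap, but you should note a wrinkle you glossed over: once you work with $\tilde p$, the integrand $\gamma'(u/\eta)V'(q_\eta^\eps)p$ in the formula for $\partial_x^2\cW$ is no longer sign-definite on $[0,t^*]$ (there $\tilde p\leqq 0$ only gives $p\leqq\delta\ee^{Lt}$, not $p\leqq 0$), so the favourable sign argument degrades to an inequality with an error of size $O(\delta\ee^{Lt}/\eta)$; this error is what $L$ must dominate, together with the $2\delta\ee^{Lt}\partial_x\cW$ term, and the needed bounds on $\|\partial_x\cW\|_\infty$, $\|V'\|_\infty$, $|\gamma|_{\sTV}$ are all available for fixed $\eta$, so the argument goes through. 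In short: correct, same key identity and same sign observation as the paper, plus a welcome extra layer of rigor that the paper leaves implicit.
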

\begin{proof}
We start with monotone increasing data and smooth the initial datum with a standard mollifier \(\{\phi_{\eps}\}_{\eps\in\R_{>0}}\subset \sC^{\infty}(\R;\R_{\geq0})\) as in \cite[Remark C.18]{leoni}. Then, the smoothed initial datum \(q_{0}^{\eps}\in\sC^{\infty}(\R)\) is still monotonically increasing and according to \cref{lem:stability} and \(V\in\sC^{3}(\R)\) the smoothed solution \(q^{\eps}_{\eta}\) is twice continuously differentiable.
Now, we prove for the smoothed solution that the monotonicity is preserved over time. To this end, compute the time derivative of the spatial derivative for \((t,x)\in\OT\)
\begin{align*}
    \partial_t\partial_x q_{\eta}^{\eps}(t,x)&\overset{\eqref{eq:nonlocal_dynamics}}{=}-\partial_x^{2}\left( q_{\eta}^{\eps}(t,x) \cW_{\eta}[V(q_{\eta}^{\eps}),\gamma](t,x)\right)\\
    &=-\partial_{x}^{2}q_{\eta}^{\eps}(t,x) \cW_{\eta}[V(q_{\eta}^{\eps}),\gamma](t,x)-2 \partial_{x}q_{\eta}^{\eps}(t,x) \partial_x \cW_{\eta}[V(q_{\eta}^{\eps}),\gamma](t,x)\\
    &\qquad-\partial_{x}^{2} \cW_{\eta}[V(q_{\eta}^{\eps}),\gamma](t,x) q_{\eta}^{\eps}(t,x).
\end{align*}
Now, for \(t\in[0,T]\) let $\tilde{x}\in \R$ be a minimal point of $\partial_{x}q_{\eta}^{\eps}(t,x)$,\ i.e., \[\tilde{x}\in\argmin \partial_{x}q_{\eta}^{\eps}(t,x).\] Then, we also have that \(\partial_{x}^{2}q_{\eta}^{\eps}(t,x)\big|_{x=\tilde{x}}=0\) and assume that \(\partial_{2}q_{\eta}^{\eps}(t,\tilde{x})=0\) so that we indeed consider the first time where the monotonicity might be destroyed, we obtain
\begin{align}
    \partial_t(\partial_x q_{\eta}^{\eps}(t,x))\big|_{x=\tilde{x}}&=-\partial_{2}^{2} \cW_{\eta}[V(q_{\eta}^{\eps}),\gamma](t,\tilde{x}) q_{\eta}^{\eps}(t,\tilde{x}).\label{eq:4142}
\end{align}
Detailing the second order derivative of the nonlocal term, we obtain
\begin{align*}
   \partial^2_{2} \cW_{\eta}[V(q_{\eta}^{\eps}),\gamma](t,x)&=\partial_{x}^{2}\tfrac{1}{\eta}\int_{x}^{\infty}\gamma\big(\tfrac{y-x}{\eta}\big)V(q_{\eta}^{\eps}(t,y))\dd y\\
   &=\partial_{x}\Big(\tfrac{1}{\eta}\int_{x}^{\infty}\gamma\big(\tfrac{y-x}{\eta}\big)V'(q_{\eta}^{\eps}(t,y))\partial_{2}q_{\eta}^{\eps}(t,y)\dd y\Big)\\
   &=-\tfrac{1}{\eta}\gamma(0)V'(q_{\eta}^{\eps}(t,x))\partial_{x}q_{\eta}^{\eps}(t,x)\\
   &\quad-\tfrac{1}{\eta^{2}}\int_{x}^{\infty}\gamma'\big(\tfrac{y-x}{\eta}\big)V'(q_{\eta}^{\eps}(t,y))\partial_{2}q_{\eta}^{\eps}(t,y)\dd y.
\end{align*}
Evaluating at \(\tilde{x}\in\R\), it yields
\begin{align*}
 \partial^2_{2} \cW_{\eta}[V(q_{\eta}^{\eps}),\gamma](t,\tilde{x})&=-\tfrac{1}{\eta}\gamma(0)V'(q_{\eta}^{\eps}(t,\tilde{x}))\partial_{2}q_{\eta}^{\eps}(t,\tilde{x})\\
 &\quad-\tfrac{1}{\eta^{2}}\int_{\tilde{x}}^{\infty}\gamma'\big(\tfrac{y-\tilde{x}}{\eta}\big)V'(q_{\eta}^{\eps}(t,y))\partial_{2}q_{\eta}^{\eps}(t,y)\dd y\\
 &=-\tfrac{1}{\eta^{2}}\int_{\tilde{x}}^{\infty}\gamma'\big(\tfrac{y-\tilde{x}}{\eta}\big)V'(q_{\eta}^{\eps}(t,y))\partial_{2}q_{\eta}^{\eps}(t,y)\dd y\leq 0
\end{align*}
as \(\gamma'\leqq 0\) as well as \(V'\leqq 0\).
Relating back to \cref{eq:4142} we thus obtain
\begin{align*}
      \partial_t(\partial_x q_{\eta}^{\eps}(t,x))\big|_{x=\tilde{x}}\geq 0.
\end{align*}
However, this means that the monotonicity is preserved. Letting \(\eps\rightarrow 0\), we end up with the claim.
In the case of monotone decreasing initial datum, the proof is similar with the proper adjustments on the monotonicity. We do not detail it here.
\end{proof}
The previous result shows the difference between nonlocality in the velocity and nonlocality in the solution, as indeed a similar result does not hold for the nonlocality in the solution.
\begin{remark}[Monotonicity preserving scheme without further restrictions on the velocity]\label{rem:monotonicity}
Note that in contrast to \cite[Theorem 4.13, Theorem 4.18]{pflug4} no restriction on the sign of the second derivative of $V$ needs to be made to obtain monotonicity preserving dynamics. This is relevant from an approximation point of view. If one would aim to define solutions of local conservation laws as limits for nonlocal conservation laws and if one might want to consider related optimal control and control problems in the nonlocal regime to approximate the corresponding local optimal control and control problems, the nonlocal in the velocity approximation might be superior as it preserves monotonicity (as the local equation as well) and is thus closer and more well-behaving as an approximation.
\end{remark}
The following Lemma gives us a uniform total variation bound assuming that the initial datum is monotone.
\begin{lemma}[Uniform \(\sTV\) bounds thanks to the monotonicity]\label{lem:uniform_TV_monotone}
Let \cref{ass:input_datum} hold and assume that the initial datum \(q_{0}\in\sL^{\infty}(\R)\) is either monotonically increasing or decreasing.. Then, the solution to the nonlocal conservation law \cref{defi:problem_setup} satisfies
\begin{align*}
    \forall (\ndt,t)\in \R_{>0}\times [0,T]: |q_{\ndt}(t,\cdot)|_{\sTV(\R)}\leq \norm{q_0}_{\sTV(\R)}.
\end{align*}
\end{lemma}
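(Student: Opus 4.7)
The plan is to leverage \cref{theo:monotonicity} directly: once we know that $q_\eta(t,\cdot)$ inherits the monotonicity of $q_0$ for every $t\in[0,T]$, the total variation reduces to the difference of essential supremum and infimum, and the maximum principle of \cref{theo:maximum_principle} pins these values to those of $q_0$.

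More concretely, first I would assume $V\in\sC^3(\R)$ so that \cref{theo:monotonicity} applies, and treat the case where $q_0$ is monotonically increasing (the decreasing case is symmetric). By \cref{theo:monotonicity}, $x\mapsto q_\eta(t,x)$ is monotonically increasing for every $t\in[0,T]$. For a monotonically increasing function $f$ on $\R$ we have the identity
\[
|f|_{\sTV(\R)} = \esssup_{x\in\R} f(x) - \essinf_{x\in\R} f(x).
\]
Applying this to $q_\eta(t,\cdot)$ and invoking the two-sided maximum principle in \cref{theo:maximum_principle} gives
\[
|q_\eta(t,\cdot)|_{\sTV(\R)} = \esssup_{x\in\R} q_\eta(t,x) - \essinf_{x\in\R} q_\eta(t,x) \leq \esssup_{x\in\R} q_0(x) - \essinf_{x\in\R} q_0(x),
\]
and the right hand side equals $|q_0|_{\sTV(\R)}$ precisely because $q_0$ is monotone.

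The only genuine issue is that \cref{ass:input_datum} requires only $V\in\sC^2(\R)$, whereas \cref{theo:monotonicity} was stated under $V\in\sC^3(\R)$. To close this gap, I would approximate $V$ by a sequence $\{V^\delta\}_{\delta\in\R_{>0}}\subset\sC^3(\R)$ with $V^\delta\to V$ in $\sC^2$ on the compact interval $[\essinf q_0,\|q_0\|_{\sL^\infty(\R)}]$, preserving the sign conditions $V^{\delta\,\prime}\leq 0$. The associated nonlocal solutions $q_\eta^\delta$ then satisfy the bound above by the argument just sketched, and because the TV bound is uniform in $\delta$ and TV is lower semicontinuous under $\sL^1_{\loc}$-convergence, the estimate is preserved when passing $\delta\to 0$ (the required stability of $q_\eta^\delta$ with respect to $V^\delta$ is analogous to the characteristic-based contraction already invoked in \cref{theo:existence_uniqueness} and \cref{lem:stability}). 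This is the only step that requires any care; the main assertion itself is an almost immediate consequence of monotonicity preservation combined with the maximum principle.
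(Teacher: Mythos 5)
Your proof is correct and follows the same route as the paper: the paper's proof is the one-line observation that the lemma is ``a direct consequence of the conservation of monotonicity'' in \cref{theo:monotonicity}, and you have simply filled in the (short) chain of reasoning via the identity \(|f|_{\sTV(\R)}=\esssup f - \essinf f\) for monotone \(f\) and the maximum principle of \cref{theo:maximum_principle}. Notably, you also caught a real regularity mismatch that the paper glosses over, namely that \cref{ass:input_datum} requires only \(V\in\sC^{2}(\R)\) while \cref{theo:monotonicity} is stated under \(V\in\sC^{3}(\R)\); your mollification of \(V\) combined with lower semicontinuity of total variation under \(\sL^{1}_{\loc}\)-convergence is a legitimate way to close that gap, although it relies on a stability-in-\(V\) estimate for the nonlocal fixed-point that the paper proves only for perturbations of \(q_0\) (\cref{lem:stability}), so one should either record that estimate or simply strengthen the hypotheses of the lemma to \(V\in\sC^{3}(\R)\).
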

\begin{proof}
This is a direct consequence of the conservation of monotonicity as outlined in \cref{theo:monotonicity}.
\end{proof}
Having this uniform \(\sTV\) bound, we obtain by classical compactness arguments the strong convergence in \(\sL^{1}_{\loc}\). As we just have given the \(\sTV\) bounds on the spatial dependency we miss some time compactness to obtain the strong convergence in \(\sC([0,T];\sL^{1}(\R))\). However, this is very similar to \cite{coclite2020general} and we will thus directly state the convergence result.
\begin{theorem}[Convergence to the entropy solution]\label{theo:convergence_monotone} For the problem given in \cref{defi:problem_setup} let the input datum as in \cref{ass:input_datum} be given together with
\begin{equation}
\int_{\R_{\geq0}}\gamma(x)x\dd x<\infty.\label{eq:additional_kernel_integrability}
\end{equation}
Let the initial datum be monotone, then the weak solution \(q_{\eta} \in\sC\big([0,T];\sL^{1}_{\loc}(\R)\big)\cap \sL^{\infty}((0,T),\sL^{\infty}(\R))\) converges to the local entropy solution \(q^{*}\in\sC\big([0,T];\sL^{1}_{\loc}(\R)\big)\cap \sL^{\infty}((0,T),\sL^{\infty}(\R))\) as defined in \cref{defi:problem_setup_local} in the \(\sC\big([0,T];\sL^{1}_{\loc}(\R)\big)\) topology, i.e.
\[
\forall K\subset\R \text{ compact:}\ \lim_{\eta\rightarrow 0}\|q_{\eta}-q^{*}\|_{\sC\big([0,T];\sL^{1}(K)\big)}=0.
\]
\end{theorem}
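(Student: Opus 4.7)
The plan follows a compactness-and-consistency strategy tailored to the monotone setting, in the spirit of \cite{coclite2020general}.

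\textbf{Step 1 (Compactness).} From \cref{theo:maximum_principle} the family $\{q_\eta\}_{\eta > 0}$ is uniformly bounded in $\sL^\infty(\OT)$, and \cref{lem:uniform_TV_monotone} provides a uniform spatial total variation bound $|q_\eta(t,\cdot)|_{\sTV(\R)} \leq |q_0|_{\sTV(\R)}$ for all $t$ and $\eta$. To upgrade this to relative compactness in $\sC([0,T];\sL^1_{\loc}(\R))$, I would establish a uniform time-regularity estimate of Hölder type: for every compact $K \subset \R$,
\[
\|q_\eta(t,\cdot) - q_\eta(s,\cdot)\|_{\sL^1(K)} \leq C_K \sqrt{|t-s|},
\]
obtained by combining the PDE (which controls $\partial_t q_\eta$ in the dual of Lipschitz functions via the uniform $\sL^\infty$ bounds on $q_\eta$ and $\cW_\eta[V(q_\eta)]$) with the spatial $\sTV$ bound via a standard mollification trick. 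Helly's theorem then extracts a (not relabeled) subsequence $q_\eta \to q^*$ in $\sC([0,T];\sL^1_{\loc}(\R))$, with $q^*$ monotone of the same type as $q_0$ and $q^* \in \sL^\infty((0,T); \sL^\infty(\R) \cap \sTV(\R))$.

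\textbf{Step 2 (Convergence of the nonlocal velocity).} By the change of variable $z = (y-x)/\eta$ and a BV translation estimate, for every compact $K$,
\[
\|\cW_\eta[V(q_\eta)](t,\cdot) - V(q_\eta(t,\cdot))\|_{\sL^1(K)} \leq L_V\, \eta\, |q_0|_{\sTV(\R)} \int_0^\infty \gamma(z)\, z \dd z,
\]
where $L_V$ is a Lipschitz constant of $V$ on the range of $q_\eta$. Thanks to \cref{eq:additional_kernel_integrability} the right-hand side vanishes uniformly in $t$ as $\eta \to 0$. Combined with the strong convergence of $q_\eta$ and the continuity of $V$, this yields $\cW_\eta[V(q_\eta)] \to V(q^*)$ in $\sC([0,T];\sL^1_{\loc}(\R))$, so that passing to the limit in the weak formulation of \cref{defi:weak_solution} shows that $q^*$ is a distributional solution of the local Cauchy problem in \cref{defi:problem_setup_local}.

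\textbf{Step 3 (Entropy admissibility).} This is the step where I expect the main technical obstacle, since the identity $\eta\partial_x\cW_\eta[V(q_\eta)] = \cW_\eta[V(q_\eta)] - V(q_\eta)$ used crucially in \cref{theo:convergenceentropy} is no longer available for general kernels. The approach is to smooth $q_\eta$ to $q_\eta^\eps$ via \cref{lem:stability}, multiply the strong form by $\alpha'(q_\eta^\eps)\phi$ with $\alpha \in \sC^2(\R)$ convex, $\beta$ the companion flux from \cref{defi:entropy_solution}, and $\phi \in \sC^1_{\text{c}}((-42,T)\times\R;\R_{\geq0})$, and to rewrite
\[
\mEF[\phi,\alpha,q_\eta^\eps] = \iint_\OT \alpha'(q_\eta^\eps)\, \partial_x\bigl(q_\eta^\eps\bigl(V(q_\eta^\eps) - \cW_\eta[V(q_\eta^\eps)]\bigr)\bigr)\phi \dd x \dd t.
\]
After integrations by parts, the right-hand side splits into terms controlled either by $\|V(q_\eta) - \cW_\eta[V(q_\eta)]\|_{\sC([0,T];\sL^1_{\loc}(\R))}$ (vanishing by Step 2) or by the product of $\eta$ and the uniform $\sTV$ bound on $V(q_\eta)$ inherited from the monotonicity of $q_\eta$. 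Taking $\eps \to 0$ by \cref{lem:stability} and then $\eta \to 0$ yields the entropy inequality \cref{ineq:Entropy_inequality} for $q^*$ and every convex $\alpha$. Uniqueness from \cref{theo:local_conservation_existence_uniqueness} then identifies $q^*$ as the local entropy solution and promotes the subsequence convergence to convergence of the full sequence.
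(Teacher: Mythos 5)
Your Steps 1 and 2 are consistent with the paper's approach (the compactness is handled with a pointer to \cite{coclite2020general}, and the translation estimate in Step 2 is exactly the paper's bound on the term \cref{eq:convergence_2}). The problem is in Step 3, and it is a genuine gap, not a stylistic difference. After the integration by parts you describe, one of the two resulting terms is
\[
\iint_{\OT}\alpha''\big(q_{\eta}^{\eps}\big)\,q_{\eta}^{\eps}\,\partial_{x}q_{\eta}^{\eps}\,\Big(V\big(q_{\eta}^{\eps}\big)-\cW_{\eta}\big[V\big(q_{\eta}^{\eps}\big),\gamma\big]\Big)\phi\dd x\dd t,
\]
and this term is \emph{not} controlled by $\eta$ times a uniform $\sTV$ bound nor by $\|V(q_{\eta})-\cW_{\eta}[V(q_{\eta})]\|_{\sC([0,T];\sL^{1}_{\loc})}$. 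Any such bound necessarily involves $\partial_{x}q_{\eta}^{\eps}$ pointwise paired against $V(q_{\eta}^{\eps})-\cW_{\eta}$, and near a shock profile $\partial_{x}q_{\eta}^{\eps}\sim 1/\eps$ while the velocity mismatch is $O(1)$ in the $\eps\to 0$ limit; the product does not vanish uniformly in $\eps$, so the crucial ``send $\eps\to 0$ first'' step breaks down.

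What actually closes this term in the paper is a \emph{sign} argument, not a smallness argument: monotonicity preservation (\cref{theo:monotonicity}) gives $\partial_{x}q_{\eta}^{\eps}\geq 0$ (for increasing $q_0$), and since $V$ is decreasing and $q_{\eta}^{\eps}(t,\cdot)$ is increasing, the one-sided average satisfies $\cW_{\eta}[V(q_{\eta}^{\eps}),\gamma]\leq V(q_{\eta}^{\eps})$ pointwise, hence $V(q_{\eta}^{\eps})-\cW_{\eta}\geq 0$. Combined with $\alpha''\geq 0$, $q_{\eta}^{\eps}\geq 0$, $\phi\geq 0$, the whole term is nonnegative and can simply be \emph{dropped} from the lower bound on $\mEF$. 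Only the $\alpha'$ term (your ``second'' type) needs the $\eta\int_0^\infty\gamma(z)z\dd z$ smallness estimate. This is precisely why monotonicity is what makes arbitrary kernels tractable here, and it is the key idea missing from your proposal. Once you replace the false smallness claim by the correct sign argument, the rest of your outline (including invoking uniqueness after obtaining the entropy inequality for every convex $\alpha$) matches the paper.
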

\begin{proof}
Due to \cref{defi:entropy_solution} we need to show that
\[
\lim_{\eta\rightarrow 0}\mEF[\phi,\alpha,q_{\eta}]\geq 0 \ \forall \phi\in \sC^{1}_{\text{c}}(\OT)
\]
Thanks to \cref{lem:stability} we can approximate \(q_{\eta}\) by \(\{q_{\eta}^{\eps}\}_{\eps\in\R_{>0}}\subset\sC^{2}(\OT)\) and as it holds
\[
\lim_{\eps\rightarrow 0} \big|\mEF[\phi,\alpha,q_{\eta}^{\eps}]-\mEF[\phi,\alpha,q_{\eta}]\big|=0
\]
we can perform the following entropy manipulations with the smooth solutions.
Following the proof in \cref{theo:convergenceentropy} and recalling the entropy solution \cref{defi:entropy_solution} we have for \(\eps\in\R_{>0}\) under the assumption that \(q_{\eta}\) and thus also \(q_{\eta}^{\eps}\) are monotonically increasing
\begin{align}
    \mEF[\phi,\alpha,q_{\eta}^{\eps}] &=\iint_{\OT}\!\!\!\alpha''(q_{\eta}^{\eps})q_{\eta}^{\eps}\partial_{x}q_{\eta}^{\eps}\Big(V\big(q_{\eta}^{\eps}\big)-\cW_{\eta}[V(q_{\eta}^{\eps}),\gamma]\Big)\phi\dd x\dd t\label{eq:convergence_1}\\
&\quad +\iint_{\OT}\!\!\!\alpha'\big(q_{\eta}^{\eps}\big)q_{\eta}^{\eps}\Big(V\big(q_{\eta}^{\eps}\big)-\cW_{\eta}\big[V\big(q_{\eta}^{\eps}\big),\gamma\big]\Big)\phi(t,x)\dd x\dd t.\label{eq:convergence_2}
\end{align}
The first term \cref{eq:convergence_1} is nonnegative as \(\alpha''\geqq 0\leqq q_{\eta}^{\eps}\),\ \(\partial_{x}q_{\eta}^{\eps}\geqq 0\leqq \phi\) thanks to the monotonicity, and as \(V\) is monotonically decreasing it also holds
\[
V\big(q_{\eta}^{\eps}(t,x)\big)-\cW_{\eta}\big[V\big(q_{\eta}^{\eps}\big),\gamma\big](t,x)\geqq 0\quad \forall (t,x)\in\OT.
\]
A lower bound for the second term \cref{eq:convergence_2} is for \(I\coloneqq\big(\essinf_{x\in\R}q_{0}(x),\|q_{0}\|_{\sL^{\infty}(\R)}\big)\) and \(A\: \norm{\alpha'} _{\sL^\infty(I)}\norm{\phi_x}_{\sL^\infty(\supp(\phi))}\norm{q_0^{\eps}}_{\sL^\infty(\R)}\) given by
\begin{align}
 \eqref{eq:convergence_2}&\geq -A\int_0^T \int_\R
      \abs{\cW_{\eta}[V(q_{\eta}^{\eps}),\gamma](t,x)-V(q_{\eta}^{\eps}(t,x))} \dd x \dd t\\
      &=-A\int_0^T \int_\R
      V(q_{\eta}^{\eps}(t,x))-\tfrac{1}{\eta}\int_{x}^{\infty}\gamma\big(\tfrac{y-x}{\eta}\big)V(q_{\eta}^{\eps}(t,y))\dd y \dd x \dd t\\
      &=-A\int_0^T \int_\R
      V(q_{\eta}^{\eps}(t,x))-\int_{0}^{\infty}\gamma(z)V(q_{\eta}^{\eps}(t,z\eta+x))\dd z \dd x \dd t\\
      &=-A\int_0^T \int_{0}^{\infty}\gamma(z)\int_\R
      V(q_{\eta}^{\eps}(t,x))-V(q_{\eta}^{\eps}(t,z\eta+x))\dd x\dd z \dd t
      \intertext{and using \cref{lem:uniform_TV_monotone}}
      &\geq -AT\|V'\|_{\sL^{\infty}(I)}|q_{0}|_{\sTV(\R)} \eta\int_{0}^{\infty}\gamma(z)z\dd z\\
      &=-AT\|V'\|_{\sL^{\infty}(I)} \eta\int_{0}^{\infty}\gamma(z)z\dd z.\label{eq:42424242}
\end{align}
Thanks to \cref{eq:additional_kernel_integrability} \(A\) is bounded uniformly in \(\eps\) so that for \(\eta\rightarrow 0\) \cref{eq:42424242} converges to zero. Letting \(\eps\rightarrow 0\) yields the claim.
\end{proof}
\begin{remark}[The additional integrability on \(\gamma\)]
Note that the assumption on \(\gamma\) to be not only in \(\sL^{1}(\R_{\geq0};\R_{\geq0})\) monotonically decreasing (as postulated in \cref{ass:input_datum}) but also to satisfy \cref{eq:additional_kernel_integrability} is no restriction for kernels with finite support and the classically used exponential kernel.
However, for other kernels \(\gamma\) with \(\supp(\gamma)=\R_{\geq0}\) it states that these kernels need to decay faster than \(x\mapsto \tfrac{1}{x^{2}}\) for \(x\rightarrow\infty\).
\end{remark}

\begin{remark}[Oleinik type estimate]\label{rem:oleinik}
Note that the estimates before also give the Oleinik entropy condition \cite{oleinik_english,oleinik} which together with the strong convergence in \(\sL^{1}_{\loc}\) yields the uniqueness of weak (local) solutions as long as the flux \(x\mapsto xV(x)\) is strictly concave/convex, i.e., \(\forall x\in \Big(\essinf_{x\in\R}q_{0}(x),\|q_{0}\|_{\sL^{\infty}(\R)}\Big)\)
\[
2V'(x)+xV''(x)\leq c\in\R_{<0}\text{ or } 2V'(x)+xV''(x)\geq c\in\R_{>0}.
\]
Due to the assumption that \(V'\leqq 0\) the strict convexity is much more restrictive.

Thus, for strictly convex flux Oleinik's states that
\[
\exists C\in\R_{\geq0}: \partial_{x}q(t,x)\leq \tfrac{C}{t}\ \forall t\in[0,T],\ x\in\R \text{ a.e.}
\]
which is satisfied if \(q_{0}\) is OSL (one sided Lipschitz-continuous) from above, i.e.,\
\[
\partial_{x}q_{0}(x)\leq C\ \forall x\in\R
\] according to \cref{theo:monotonicity}. Thus, it particular holds for all monotonically decreasing datum.
For strictly concave flux, Oleinik's entropy condition states that
\[
\exists C\in\R_{\geq0}: \partial_{x}q(t,x)\geq \tfrac{-C}{t}\ \forall t\in[0,T],\ x\in\R \text{ a.e.}
\]
which is according to \cref{theo:monotonicity} always satisfied for monotonically decreasing initial datum, and also for monotonically increasing initial datum which is OSL from below.

However, as \cref{theo:convergence_monotone} is more general than Oleinik's entropy condition as we do not require additional assumptions on the velocity, we have only detailed \cref{theo:convergence_monotone}.
\end{remark}

\section{Numerical illustration}\label{sec:numerics}
In this section, we illustrate the convergence for different setups and compare in particular \textbf{nonlocal in the solution} with \textbf{nonlocal in the velocity} (see also \cref{eq:local_nonlocal}).

Let us first comment on \cref{fig:const_kernel} where we have chosen a constant kernel (note that our results are not applicable in this specific setting) with initial datum \(q_{0}\equiv\tfrac{1}{4}+\tfrac{1}{2}\chi_{[-0.5,0.5]}\). From top to bottom one can observe the claimed convergence. In the first and third column, we have the nonlocal operator acting on the solution and not on the velocity, while on the second and fourth column, on the velocity, one time for a concave velocity, and one time for an convex velocity. As can be seen, only for larger \(\eta\) (the first row) a difference between nonlocal in the solution and nonlocal in the velocity can be spotted, but for smaller \(\eta\) this difference vanishes for the eye.

We underline that our results and in particular \cref{theo:main} are not applicable to the problem (as they involve the constant kernel and not the exponential one), however, one can even in this case observe the convergence to the local entropy solution which cannot be explained with a viscosity effect of the underlying numerical scheme as we work with a characteristic based method capable of tracking the discontinuities precisely \cite{pflug2}.

The same numerical experiment is made for the exponential case (where \cref{theo:main} is applicable) in \cref{fig:1}. One can see the smoothing effect of the exponential kernel quite well for larger \(\eta\).

In \cref{fig:solution_time_slice} the solutions are plotted at time \(t=0.5\) for the different settings described above for the constant kernel. One can clearly see the violation of the monotonicity in for $x>0$ in the top left figure.

For any initial datum $q_0 \in \sL^\infty(\R)$ for which there exists $\bar x \in \R$ s.t.\ $q_0|_{(\bar x,\infty)}$ is monotone, the solution $q$ for the setting nonlocal in velocity is also monotone on the domain to the right of the characteristics emanating from $(0,\bar x) \in \OT$. This is a direct consequence of the method of characteristics (see e.g. \cite{pflug}) and \cref{rem:monotonicity}.
This is in general not true for the nonlocal in the solution formulation as already shown in \cite[Theorem 4.13, Theorem 4.18]{pflug4}. This can clearly be observed in all but the lower left plots in \cref{fig:solution_time_slice}.

Although we have not chosen a monotone initial datum one can roughly expect a monotonicity preserving numerical result for the nonlocal in the velocity case independent of the velocity. Indeed, as can be observed, the monotonicity is always preserved which is in contrast to the local in the solution case, where such a monotonicity is destroyed if the velocity is not satisfying specific assumptions.

\begin{figure}
    \centering
    \includegraphics[scale = 0.52,clip,trim=0 25 17 10]{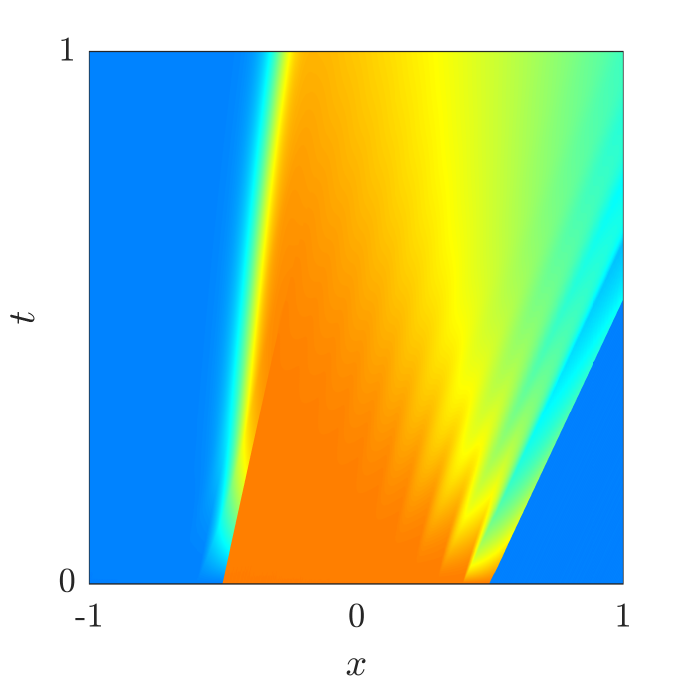}
   \includegraphics[scale = 0.52,clip,trim=22 25 17 10]{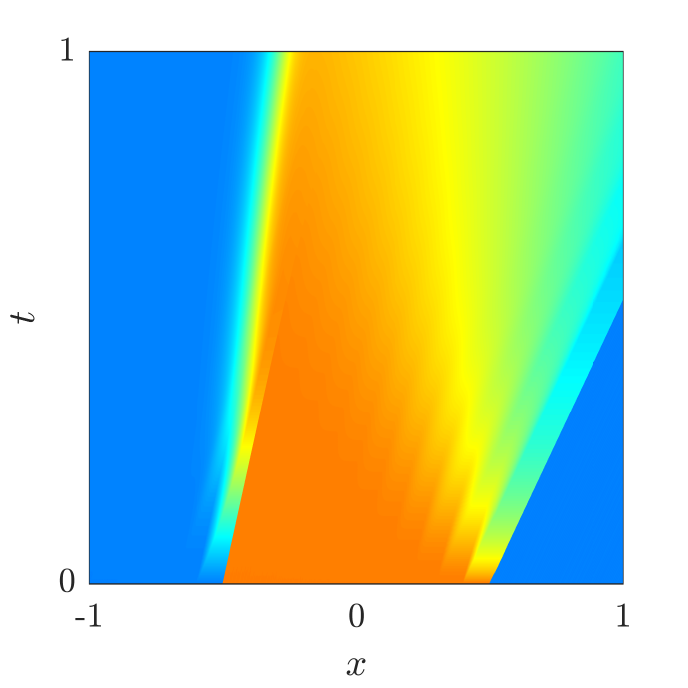}
    \includegraphics[scale = 0.52,clip,trim=22 25 17 10]{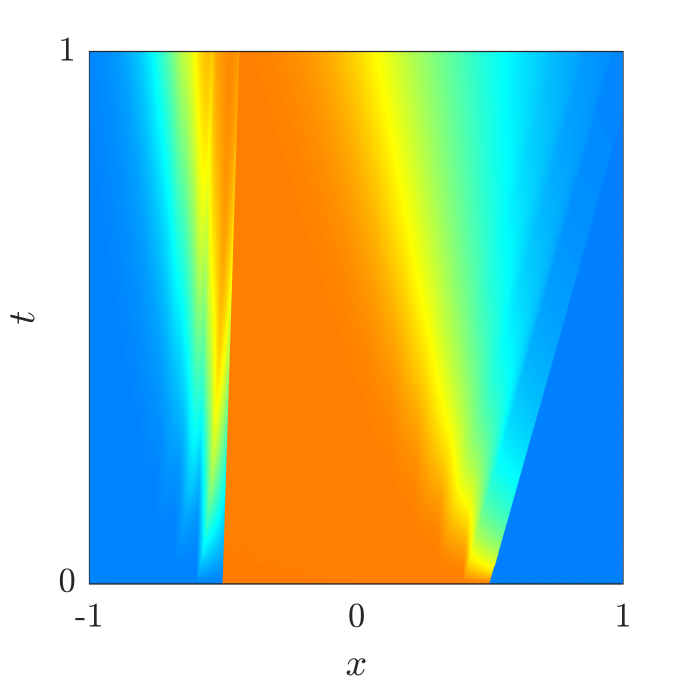}
    \includegraphics[scale = 0.52,clip,trim=22 25 17 10]{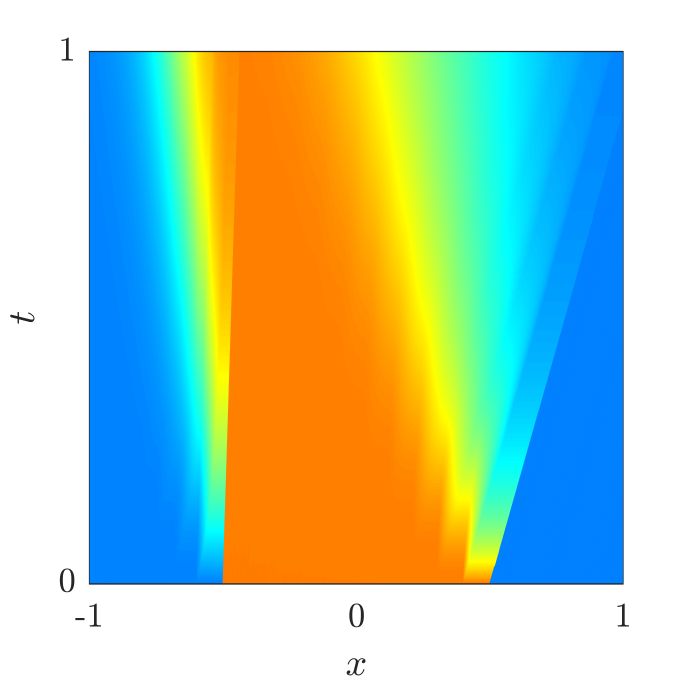}

    \includegraphics[scale = 0.52,clip,trim=0 25 17 10]{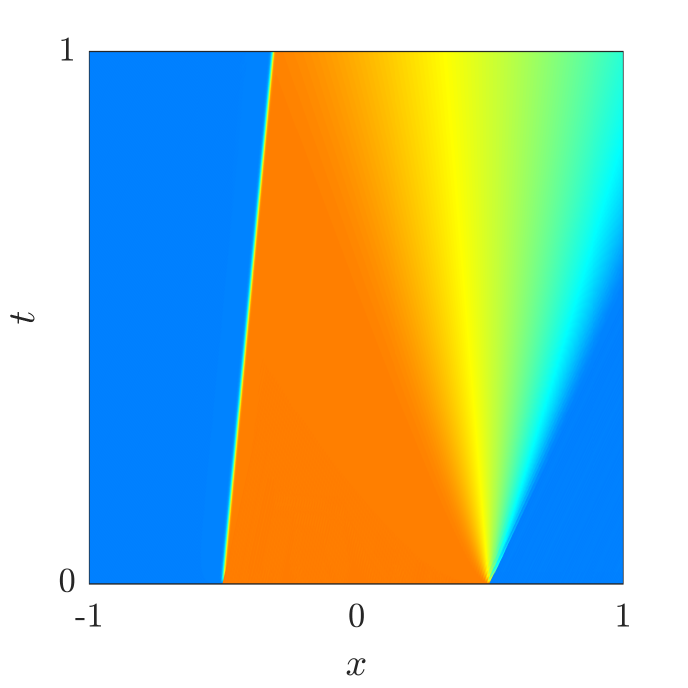}
   \includegraphics[scale = 0.52,clip,trim=22 25 17 10]{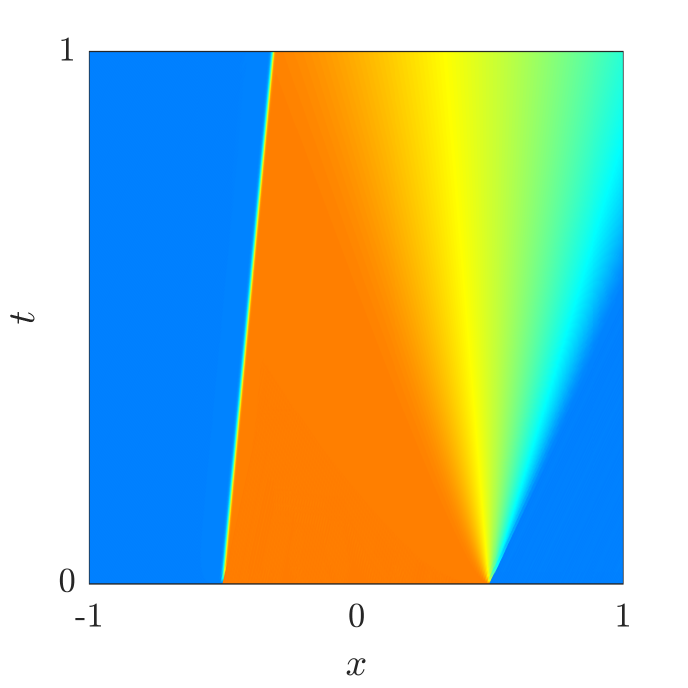}
    \includegraphics[scale = 0.52,clip,trim=22 25 17 10]{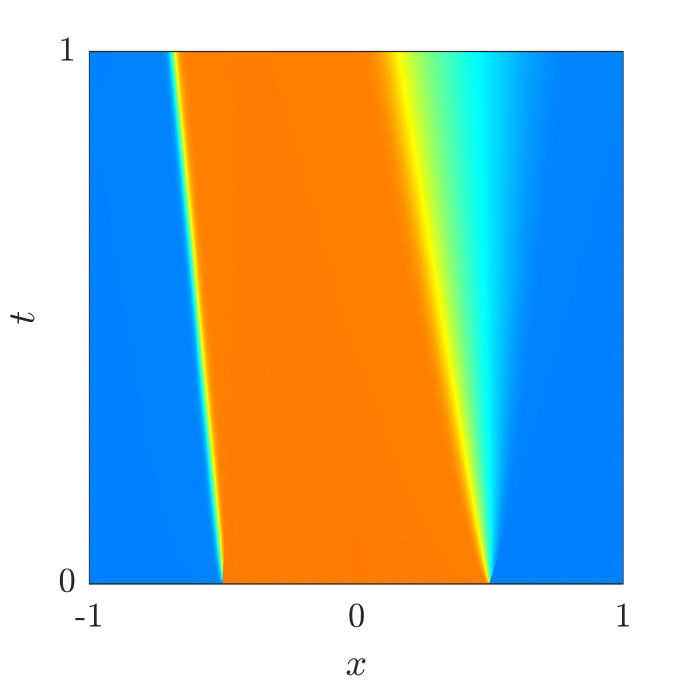}
    \includegraphics[scale = 0.52,clip,trim=22 25 17 10]{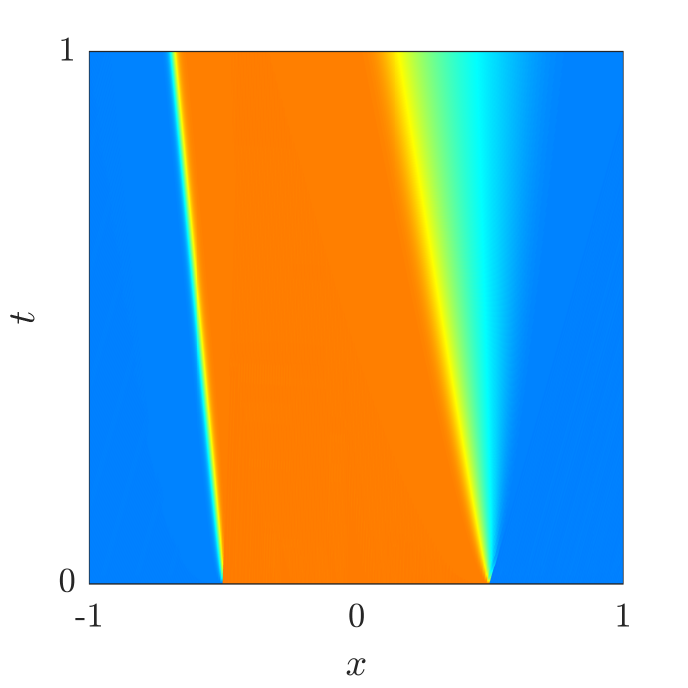}

    \includegraphics[scale = 0.52,clip,trim=0 0 17 10]{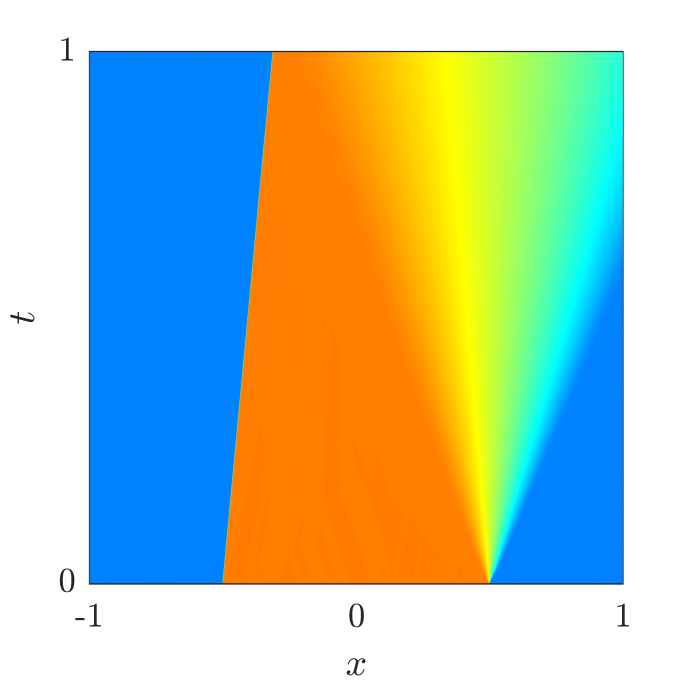}
   \includegraphics[scale = 0.52,clip,trim=22 0 17 10]{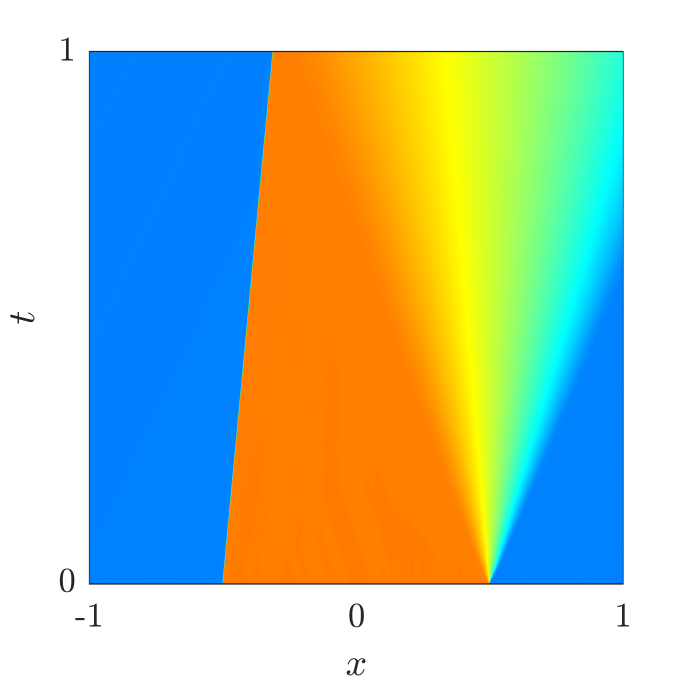}
    \includegraphics[scale = 0.52,clip,trim=22 0 17 10]{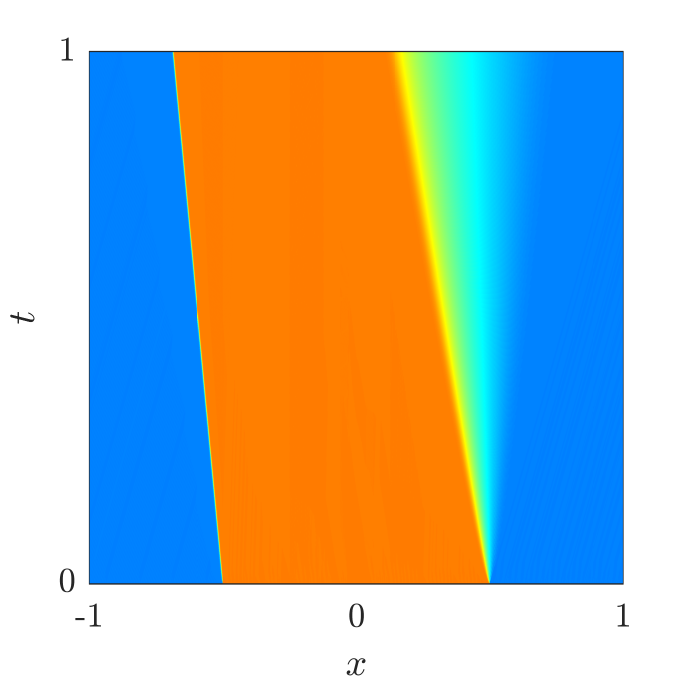}
    \includegraphics[scale = 0.52,clip,trim=22 0 17 10]{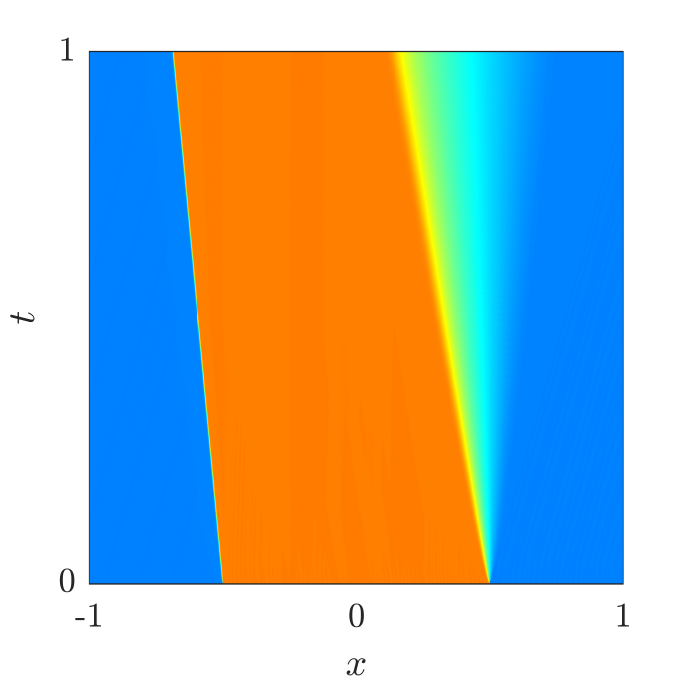}

    \caption{constant kernel $\gamma\equiv \eta^{-1}\chi_{(0,1)}(\cdot \eta^{-1})$ for $x\in \R$, initial datum \(q_{0}\equiv\tfrac{1}{4}+\frac{1}{2}\chi_{[-0.5,0.5]}\), \textbf{first column:} Nonlocal in the solution, \(V\equiv (1-\cdot)^2\), \textbf{second column:} Nonlocal in the velocity, \(V = (1-\cdot)^2,\) \textbf{third column:} nonlocal in the solution, \(V(\cdot)=1-(\cdot)^{2}\), \textbf{last column:} Nonlocal in the velocity, \(V(\cdot)=1-(\cdot)^{2}\). \textbf{Top row:} $\eta = 10^{-1}$, \textbf{middle row:} $\eta = 10^{-2}$ and \textbf{bottom row:} $\eta = 10^{-3}$, \textbf{Colorbar:}  $0\ $\protect\includegraphics[width=1.5cm]{ColorbarJet.png}$\, 1$}
    \label{fig:const_kernel}
\end{figure}

\begin{figure}
    \centering
    \includegraphics[scale = 0.52,clip,trim=15 40 20 10]{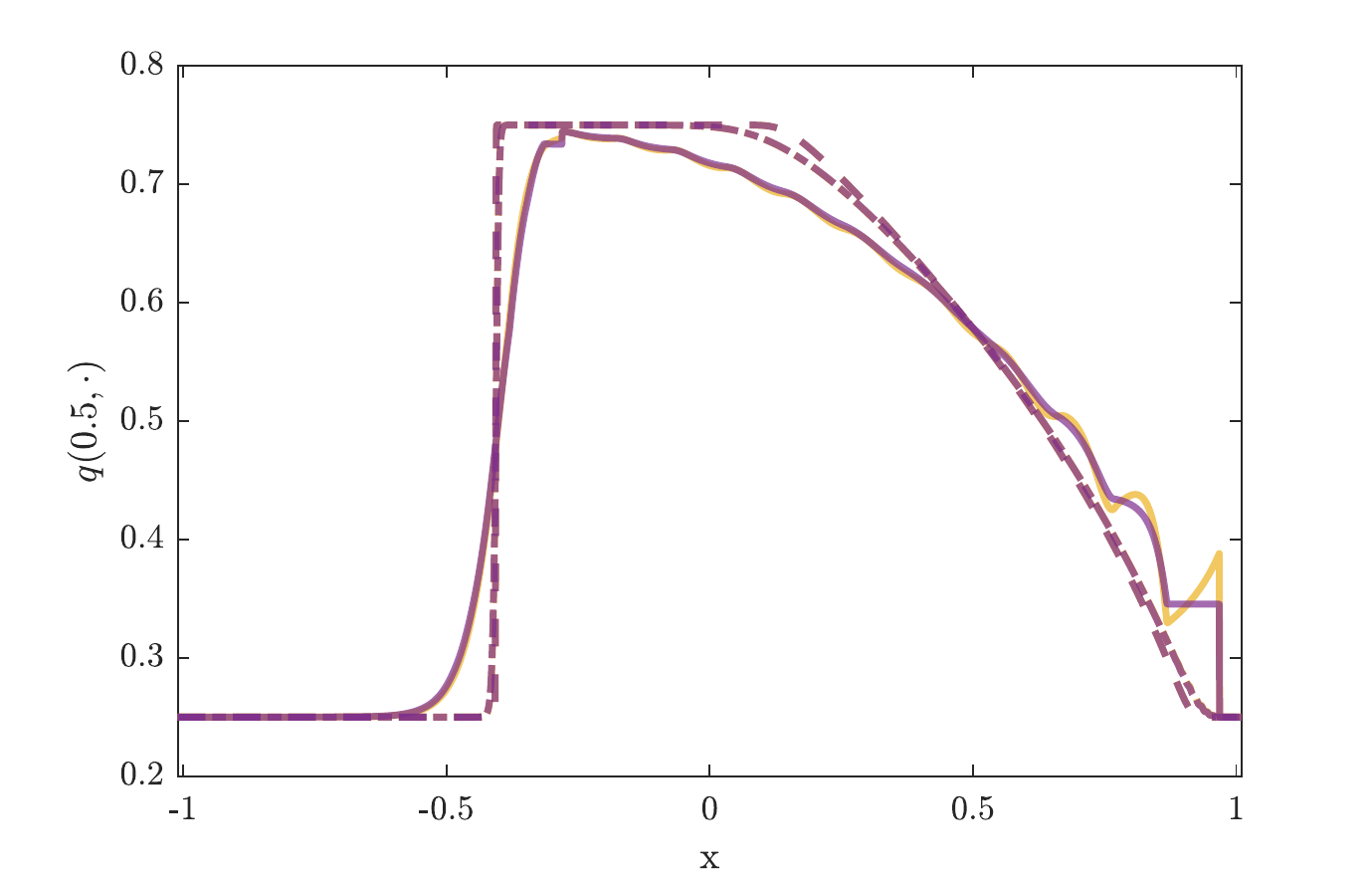}
    \includegraphics[scale = 0.52,clip,trim=50 40 20 10]{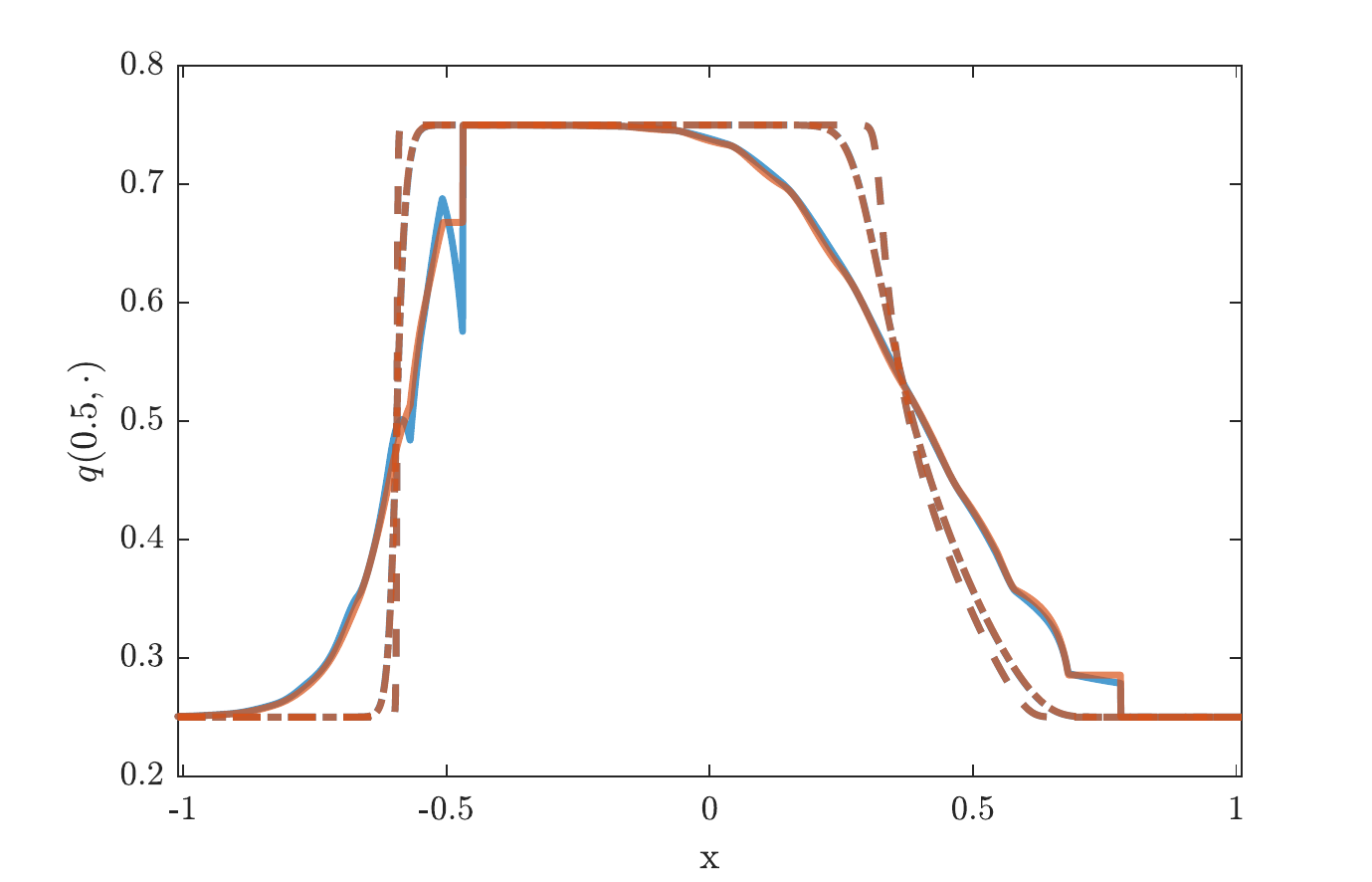}

    \includegraphics[scale = 0.52,clip,trim=15 0 20 10]{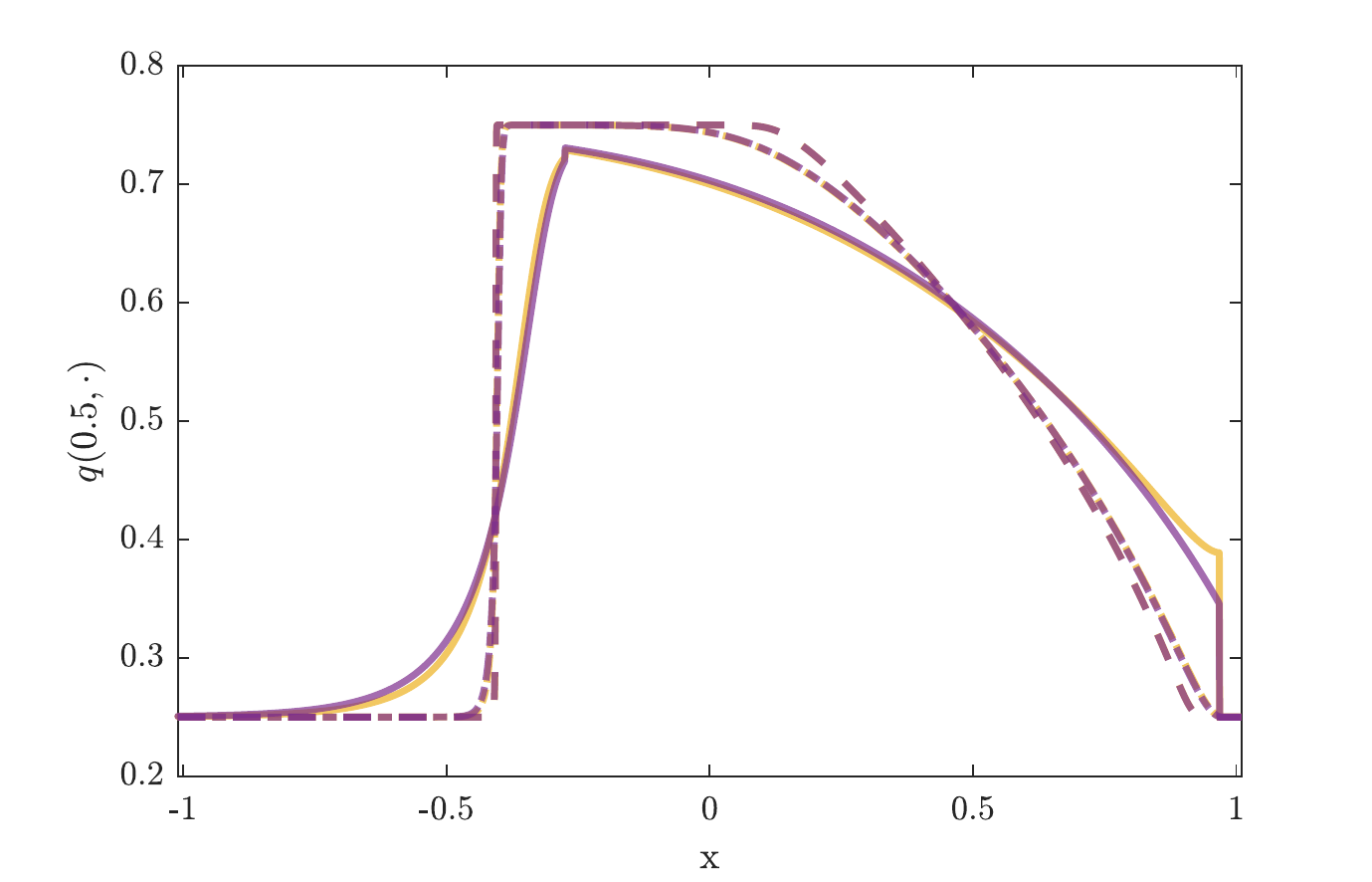}
    \includegraphics[scale = 0.52,clip,trim=50 0 22 10]{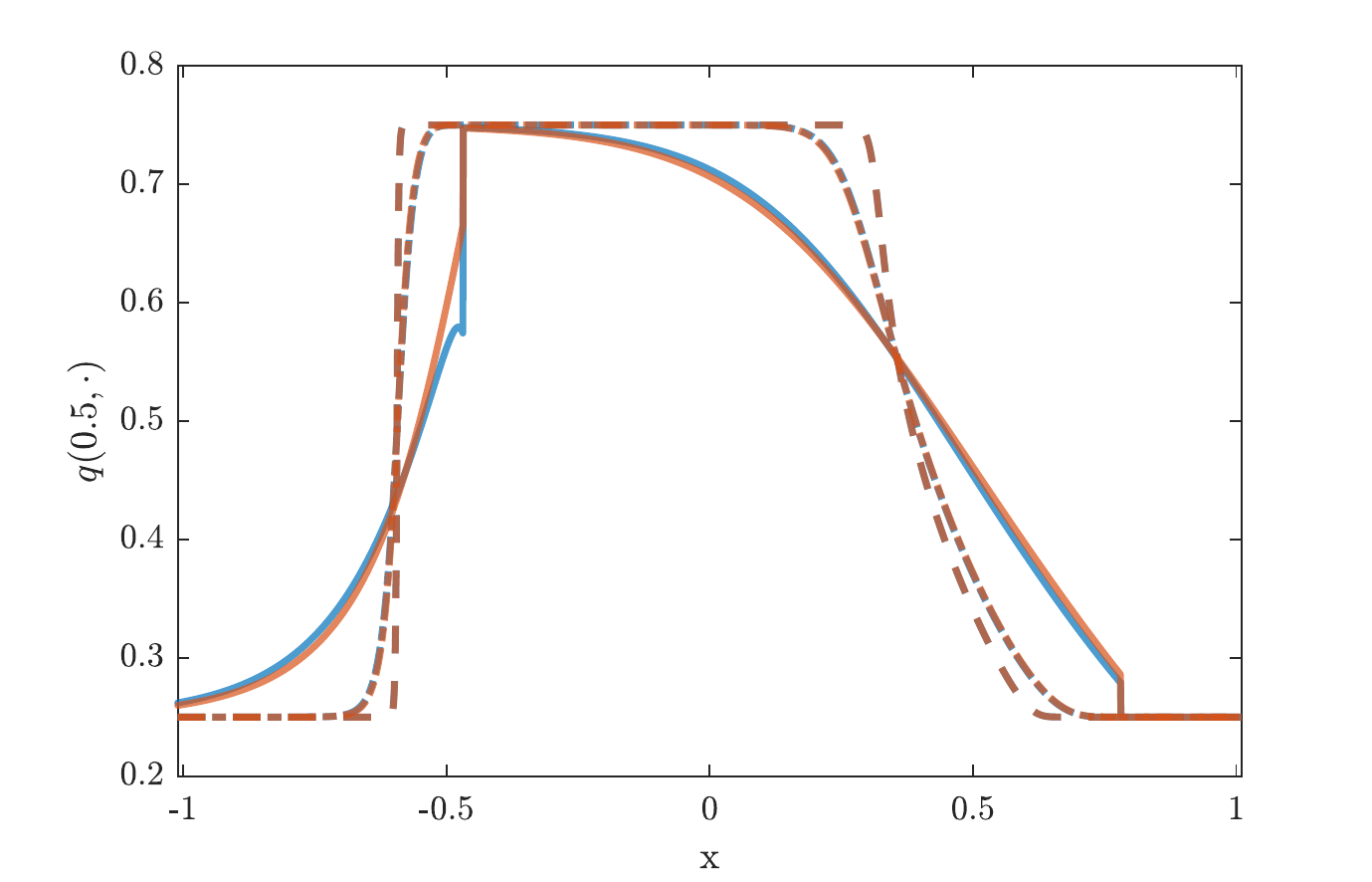}
    \caption{Same setup as in \crefrange{fig:1}{fig:const_kernel}. \textbf{Top row:} constant kernel, \textbf{bottom row:} exponential kernel, \textbf{left column:} convex velocity \(V\equiv (1-\cdot)^{2}\), \textbf{right column}: concave velocity \(V\equiv 1-(\cdot)^{2}\), \textbf{solid line:} $\eta=10^{-1}$, \textbf{dashed-dotted:} $\eta = 10^{-2}$ and \textbf{dashed:} $\eta = 10^{-3}$, \textbf{purple and red:} nonlocal in velocity, \textbf{yellow and blue}: nonlocal in solution.}
    \label{fig:solution_time_slice}
\end{figure}

\section{Conclusions and future work}\label{sec:conclusions}
In this contribution we have studied the convergence to the local entropy solutions when the nonlocal operator does not act on the solution itself but on the velocity. We have established for the exponential kernel and a variety of velocities functions the convergence to the local entropy solution. Moreover, for monotone datum and arbitrary kernels, we obtain this convergence as well. This motivates to study several open problems:
\textbf{1}) Does the convergence generally hold? It seems like the nonlocal averaging over the velocity behaves somewhat more reasonable as it conserves monotonicity, etc., however, on the other hand the results are not as general as in \cite{coclite2020general}. In our convergence analysis we use a specific strictly convex entropy depending on the velocity. This choice of entropy requires the velocity's derivative to be nonzero, and restricts the result. Is there another entropy which does not require this assumption and can we thus generalize the result?
\textbf{2}) In a recent preprint, the results for the singular limit problem nonlocal in the solution could be generalized to a variety of other (physical relevant) kernels \cite{keimer42,colombo2022nonlocal} and it would be important to understand whether this is also possible in the nonlocal in the velocity case.
\textbf{3)} Can we obtain the same results when considering instead of a conservation law a balance law with nonlinear right hand side?
\textbf{4)} In the case of a symmetric kernel, the solutions are not even uniformly bounded with respect to the nonlocal parameter. However, it seems that convergence in a weak sense yet to be determined might still hold and would give an even more general convergence result.
\textbf{5)} Control and optimal control in the singular limit case.

\section*{Acknowledgments}
Lukas Pflug thanks for the support by the Collaborative Research Centre 1411 “Design of Particulate Products” (Project-ID 416229255).
Jan Friedrich was supported by the German Research Foundation (DFG) under grant HE 5386/18-1, 19-2, 22-1, 23-1 and
Simone G\"ottlich under grant GO 1920/10-1.
\bibliographystyle{siamplain}
\bibliography{biblio}
\end{document}